\newcommand{\bea}{\begin{eqnarray*}}
\newcommand{\eea}{\end{eqnarray*}}
\newcommand{\bm}{\begin{pmatrix}}
\newcommand{\fm}{\end{pmatrix}}
\newcommand{\bvm}{\begin{vmatrix}}
\newcommand{\fvm}{\end{vmatrix}}
\newcommand{\bbm}{\begin{bmatrix}}
\newcommand{\fbm}{\end{bmatrix}}
\newcommand\Z{\mathbb Z}
\newcommand\R{\mathbb R}
\newcommand\mc{\mathcal}
\newtheorem{theorem}{Theorem}
\newtheorem{ex}{Example}
\newtheorem{fact}{Fact}
\newtheorem{lemma}[theorem]{Lemma}
\newtheorem{cor}[theorem]{Corollary}
\newtheorem{defn}{Definition}
\newcommand{\set}[1]{\{#1\}}
\DeclareMathOperator{\Ex}{E}
\DeclareMathOperator{\Prob}{P}
\title{A matrix model for random nilpotent groups}
\author{Kelly Delp, Tullia Dymarz, Anschel Schaffer-Cohen}
\begin{document}
\maketitle

\begin{abstract}
We study random torsion-free nilpotent groups generated by a pair of random words of length $\ell$ in the standard generating set of $U_n(\mathbb{Z})$.
Specifically, we give asymptotic results about the step properties of the group when the lengths of the generating words are functions of  $n$. We show that the threshold function for asymptotic abelianness is $\ell = c \sqrt{n}$, for which the probability approaches $e^{-2c^2}$, and also that the threshold function for having full-step, the same step as $U_n(\Z)$, is between $c n^2$ and $c n^3$.

\end{abstract}


\section{Introduction}

The goal of this paper is to study random finitely-generated torsion-free nilpotent groups (also known as $T$-groups \cite{tgroups}). 
Recall that a nilpotent group $N$ is one for which the lower central series eventually terminates:
$$N = N_0 \geq N_1 \geq \dotsb \ge N_r =\set{0}$$
where $N_{i}=[N,N_{i-1}]$ is  the $i$th commutator subgroup (i.e. the subgroups generated by commutators of elements in $N$ and $N_{i-1}$). If $r$ is the first index with $N_r=\set{0}$ then  we say that $N$ is nilpotent of \emph{step} $r$. For more background on nilpotent groups see~\cite{mks}.

Our motivation for studying random nilpotent groups comes from Gromov's study of finitely generated random groups via random presentations (see \cite{Ollivier} for a detailed introduction). Roughly speaking Gromov considers groups $G_\ell$ given by a presentation $G_\ell =\left< S \mid R_\ell \right>$, where  the generating set $S$ is fixed and finite, and the relator set $R_\ell$  contains a subset of all possible relators of length at most $\ell$.  A random group is said to have a property $P$ if the probability that $G_\ell$ has $P$ goes to one as $\ell$ goes to infinity.  Generally the size of $R_\ell$ depends on $\ell$ and a chosen density constant $d \in [0,1]$ where $R_\ell$ at density $d$ contains on order of the $d$th power of possible relations of size less than $\ell$. Changing $d$ changes the properties of the random group. A fundamental result of Gromov's shows that when the density is greater than $1/2$  the resulting random group is trivial, and when the density is less than $1/2$ then the random group is a so-called \emph{hyperbolic} group. Unfortunately, nilpotent groups are not hyperbolic so this model is unsatisfactory for studying random nilpotent groups. For a recent generalization of Gromov's idea to quotients of free nilpotent groups see \cite{randnilp1}.

The model we study is motivated by a well-known theorem \cite{Hall} which states that any finitely-generated, torsion-free nilpotent group appears as a subgroup of $U_n(\Z)$, the group of $n \times n$ upper-triangular matrices with ones on the diagonal and entries in $\Z$.

 Let $E_{i,j}$ be the elementary matrix that differs from the identity matrix $I_n$ by containing a one at position $(i,j)$ and set $A_i=E_{i,i+1}$. Then the set $S=\set{A_1^{\pm1}, \dotsc, A_{n-1}^{\pm1}}$ of \emph{superdiagonal  elementary matrices} is the \emph{standard generating set} for $U_n(\Z)$. 
Our random subgroups will be generated by taking two simple random walks of length $\ell$ on the Cayley graph of $U_n(\Z)$ given by the generating set $S$.  These two random walks define two words, $V,W$ that generate a subgroup 
$$G_{\ell, n}:=\langle V, W\rangle \leq U_n(\Z).$$  
We are interested in the asymptotic properties of $G_{\ell,n}$ as $\ell \to \infty$.  For example, when $n$ is fixed one can show that the probability that $G_{\ell, n}$ is abelian goes to zero as $\ell \to \infty$.  If $\ell$ is a function of $n$, then the asymptotic abelianness depends on the rate of growth.

Before giving the precise statement of our results, we recall the Landau notation that we use to describe the growth rate of $\ell$:

\begin{itemize}
\item If $f(n) \in O(g(n))$ then there exist numbers $c$ and $N$, so that $n > N$ implies $f(n) < cg(n)$.
\item If $f(n) \in o(g(n))$ then for all $c>0$, there exists an $N$, so that $n > N$ implies $f(n) < cg(n)$.
\item If $f(n) \in \omega(g(n))$ then for all $c>0$, there exists an $N$, so that $n > N$ implies $f(n) > cg(n)$.
\end{itemize}

Additionally, we write $f(n) \sim g(n)$ if $\lim_{n \to \infty} f(n)/g(n)=1$.

Let $P$ be  a property of a group. For a particular length function $\ell(n)$, we say $G_{\ell,n}$ is \textit{asymptotically almost surely}  (a.a.s) $P$ if the probability that $G_{\ell,n}$ has $P$ approaches $1$ as $n$ approaches infinity. In Section \ref{sec:comm} we prove the following theorem:

\begin{theorem}\label{theorem:abelian} Let $G_{\ell,n}$ be a subgroup of $U_n(\Z)$ generated by two random walks of length $\ell$ in the standard generating set $S$ and suppose $\ell$ is a function of $n$.
\begin{enumerate}
\item If $\ell \in o(\sqrt{n})$ then asymptotically almost surely $G_{\ell, n}$  is abelian. 
\item If $\ell = c\sqrt{n}$ then the probability that $G_{\ell,n}$ is abelian approaches $e^{-2 c^2}$ as $n \rightarrow \infty$.
\item If $\ell \in \omega(\sqrt{n})$, then asymptotically almost surely $G_{\ell, n}$ is not abelian.
\end{enumerate}
\end{theorem}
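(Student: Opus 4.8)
The plan is to understand when the two random words $V$ and $W$ commute as matrices in $U_n(\Z)$. Two matrices $V = I + X + (\text{higher order})$ and $W = I + Y + (\text{higher order})$ in $U_n(\Z)$ commute if and only if their images in the abelianization $U_n(\Z)/[U_n(\Z), U_n(\Z)] \cong \Z^{n-1}$ interact only through the first-level data — more precisely, $G_{\ell,n}$ is abelian exactly when the commutator $[V,W]$ is trivial. I would first reduce commuting to a condition on the ``first row of superdiagonal exponents'': writing $a_i$ (resp. $b_i$) for the net number of times the letter $A_i^{\pm 1}$ appears in the reduced word $V$ (resp. $W$), the key observation is that $[V,W] = I$ forces, at minimum, that the superdiagonal-adjacent pairs don't both get used, i.e. that there is no index $i$ with ($a_i \neq 0$ or $a_{i+1}\neq 0$) and the corresponding entries of $W$ nonzero — the precise combinatorial condition is that the ``supports'' of $V$ and $W$ on the superdiagonal, viewed as subsets of $\{1,\dots,n-1\}$, must be separated in the sense that no two adjacent generators $A_i, A_{i+1}$ appear one in $V$ and one in $W$ (since $[A_i,A_{i+1}] \neq I$ but $[A_i,A_j]=I$ for $|i-j|\geq 2$). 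I expect that after a short argument one shows $G_{\ell,n}$ is abelian \emph{if and only if} this adjacency-free condition on the two superdiagonal supports holds — the nilpotent structure guarantees that once the generators used commute pairwise, the whole subgroup is abelian.

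**The probabilistic core.** Having reduced to this combinatorial event, the problem becomes: run two independent lazy-free random walks of length $\ell$ on the path/line graph with $n-1$ superdiagonal generators (the walk increments a counter $a_i$ or decrements it each step depending on which of the $2(n-1)$ letters is chosen), record which counters $a_i$ are visited (i.e. have ever been touched), do the same independently for $W$, and ask for the probability that no generator touched by $V$ is adjacent to one touched by $W$. For $\ell \in o(\sqrt n)$, each of $V$ and $W$ touches $O(\ell)$ of the $n-1$ slots, chosen roughly uniformly (the walk on the index set $\{1,\dots,n-1\}$ doesn't have time to localize), so the chance of an adjacency is $O(\ell^2/n) \to 0$; this gives part (1). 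For $\ell = c\sqrt n$, the number of ordered pairs $(i,j)$ with $|i-j|=1$, one slot from $V$'s support and one from $W$'s, is asymptotically Poisson with mean $\to 2c^2$ (the factor $2$ coming from the two orientations $j = i\pm1$), so the probability of no collision tends to $e^{-2c^2}$; this is part (2). For $\ell \in \omega(\sqrt n)$, the expected number of adjacencies diverges and a second-moment argument shows the event ``some adjacency occurs'' has probability $\to 1$, giving part (3) — this is precisely the statement to be proved here.

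**Executing part (3).** For the non-abelian claim I would let $Z$ be the number of adjacent pairs $(i,i+1)$ with exactly one of the two indices in $\mathrm{supp}(V)$ and the other in $\mathrm{supp}(W)$ (so $G_{\ell,n}$ abelian $\implies Z = 0$). The strategy is a standard second-moment / Paley–Zygmund argument: show $\Ex[Z] \to \infty$ and $\Ex[Z^2] \leq (1+o(1))\Ex[Z]^2$, whence $\Prob(Z = 0) \leq \Var(Z)/\Ex[Z]^2 \to 0$. The first-moment estimate $\Ex[Z] \sim 2\ell^2/n$ (when $\ell$ is, say, $o(n)$; for larger $\ell$ the supports saturate and the bound is even easier) follows from the earlier analysis: $\Prob(i \in \mathrm{supp}(V)) = \Theta(\ell/n)$ uniformly over $i$ in the bulk, and the two walks are independent. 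The second-moment computation requires controlling correlations $\Prob(i\in\mathrm{supp}(V), i'\in\mathrm{supp}(V))$ for pairs of indices from the \emph{same} walk — here one must check that visiting index $i$ does not too strongly boost the chance of visiting a nearby index $i'$ beyond the independent heuristic; since the walk on the index line is recurrent-like on the relevant scale, the correlation is positive but bounded, and summing over the $\sim n$ relevant pairs of adjacent pairs keeps $\Ex[Z^2]$ within a $(1+o(1))$ factor of $\Ex[Z]^2$ as long as $\ell/\sqrt n \to \infty$ guarantees $\Ex[Z]\to\infty$ to kill the diagonal terms.

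**Main obstacle.** The delicate point — and the part I'd spend the most care on — is the variance estimate when $\ell$ is only mildly larger than $\sqrt n$, e.g. $\ell = \sqrt{n}\log n$: here $\Ex[Z]$ is only $\log^2 n$, so the second moment must be controlled quite tightly, and one genuinely needs a good handle on the joint hitting probabilities of two sites by a single random walk of length $\ell$ on a line of length $n$. I would handle this by comparing to a walk on $\Z$ (ignoring the boundary, valid since $\ell \ll n$ so the walk rarely reaches the ends) and using the reflection principle / known local-time estimates to bound $\Prob(i, i' \in \mathrm{supp}(V))$ by (independent prediction) times a uniformly bounded factor, after which the sum telescopes cleanly. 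An alternative, cleaner route if the tight case proves annoying: prove part (3) only for $\ell \in \omega(\sqrt{n}\log n)$ by a first-moment/union argument showing some specific adjacency occurs with probability $\to 1$, and then note that the intermediate regime $\sqrt n \ll \ell \ll \sqrt n \log n$ follows by monotonicity — being non-abelian is (essentially) a monotone event in $\ell$ once one couples walks of different lengths as prefixes of a single longer walk. I expect the final proof to use monotonicity to reduce to a convenient single scale and then close with the second-moment bound there.
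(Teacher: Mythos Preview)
Your reduction to the ``adjacency-free supports'' condition is the right first move---this is exactly the paper's notion of \emph{supercommuting}---but it is \emph{not} an if-and-only-if for $[V,W]=I$, and this is where the proposal has its main gap. Supercommuting trivially implies commuting; the converse is false, and the paper spends a real lemma (using that when $\ell\in o(n)$ each generator a.a.s.\ appears at most once, so the second-superdiagonal commutator entries $v_i w_{i+1}-v_{i+1}w_i$ are forced to be nonzero once an adjacency occurs) to show that $\Prob(\text{commute but not supercommute})\to 0$. Your ``after a short argument'' glosses over this, and more importantly the argument only works for $\ell\in o(n)$. For $\ell$ comparable to $n$ or in $\omega(n)$ the equivalence breaks down entirely, and the paper handles those ranges by completely different means: for $\ell\in\omega(n)$ it uses the necessary condition $v_1 w_2=v_2 w_1$ together with local-CLT estimates on the superdiagonal entries, and for $k\le \ell/n\le M$ it runs a tailored second-moment argument on specific superdiagonal value patterns (not on supports). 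Your proposal has no plan for either of these regimes, so part~(3) is incomplete as written.

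Two smaller points. First, there is a model confusion in your variance discussion: the index chosen at each step is i.i.d.\ uniform on $\{1,\dots,n-1\}$, not a nearest-neighbor walk on a line, so there is no ``reflection principle'' or ``local-time'' issue---$\Prob(i\in\mathrm{supp}(V),\, i'\in\mathrm{supp}(V))$ is an elementary coupon-collector computation and the second moment closes easily. (The paper actually bypasses even this in the $\omega(\sqrt n)\cap o(n)$ range by citing a two-color birthday bound rather than a second-moment argument.) Second, your monotonicity fallback is not valid as stated: under prefix coupling, non-commuting is \emph{not} monotone in $\ell$---extending $V$ or $W$ can make $[V,W]$ trivial again---so you cannot reduce to a single convenient scale without first establishing the supercommuting equivalence in the relevant range.
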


Another  property we focus on in this paper is the step of $G_{\ell, n}$. Note that $U_n(\Z)$ is a step $n-1$ nilpotent group. We say that $G_{\ell, n}$ has \emph{full step} if it is also of step $n-1$.  
We show that the threshold function for being full step lies between $n^2$ and $n^3$.

\begin{theorem}\label{theorem:full_step} Let $G_{\ell,n}$ be a subgroup of $U_n(\Z)$ generated by two random walks of length $\ell$ in the standard generating set $S$ and suppose $\ell$ is a function of $n$.
\begin{enumerate}
\item If $\ell \in o(n^2)$ then asymptotically almost surely $G_{\ell, n}$ does not have full step.  
\item If $\ell \in \omega(n^3)$ then asymptotically almost surely $G_{\ell, n}$ has full step.  
\end{enumerate}
\end{theorem}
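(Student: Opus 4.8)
The plan is to translate ``full step'' into a statement about the first superdiagonals of $V$ and $W$. Write $V = s_1\cdots s_\ell$ and $W = t_1\cdots t_\ell$ with the $s_i,t_i$ independent and uniform in $S$, and let $v_k\in\Z$ (resp.\ $w_k$) be the signed number of occurrences of $A_k^{\pm1}$ among the $s_i$ (resp.\ the $t_i$); these are the $(k,k+1)$-entries of $V$ and $W$. Set $V_1=\sum_k v_k E_{k,k+1}$ and $W_1=\sum_k w_k E_{k,k+1}$ in the strictly upper triangular Lie algebra $\mathfrak u_n$. The reduction I would prove first is: $G_{\ell,n}$ has full step if and only if the Lie subalgebra $\mathfrak h=\langle V_1,W_1\rangle\subseteq\mathfrak u_n$ contains $E_{1,n}$. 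Indeed, since $\gamma_n(G_{\ell,n})=1$, the group $\gamma_{n-1}(G_{\ell,n})$ is generated by the weight-$(n-1)$ commutators in $V^{\pm1},W^{\pm1}$; each lies in $\gamma_{n-1}(U_n(\Z))$, the central subgroup $\{I+tE_{1,n}:t\in\Z\}$, so its value is biadditive in the entries, and a short induction on the bracketing (using that a commutator has vanishing first superdiagonal, so only the degree-one part of each letter contributes in the top degree) identifies the $(1,n)$-entry of a weight-$(n-1)$ commutator with the $(1,n)$-entry of the corresponding iterated Lie bracket of the first superdiagonals of its letters. As $\mathfrak h$ is graded with degree-$(n-1)$ component inside $\Q E_{1,n}$, the equivalence follows.

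For part (1), the combinatorial point is that expanding an iterated bracket of the $e_k=E_{k,k+1}$ gives a nonzero contribution to $E_{1,n}$ only when each index in $\{1,\dots,n-1\}$ is used exactly once (otherwise the ``interval'' cannot grow from length one to $[1,n]$). Hence if some index $m$ has $v_m=w_m=0$, every iterated bracket of $V_1,W_1$ vanishes, so $E_{1,n}\notin\mathfrak h$ and $G_{\ell,n}$ is \emph{not} full step. It then suffices to show that, for $\ell\in o(n^2)$, a.a.s.\ some $m$ satisfies $v_m=w_m=0$. Conditioning on the occurrence counts $p_k,q_k$ of $A_k^{\pm1}$ in $V$ and $W$, the events $\{v_m=0\}$ are mutually independent and independent of the events $\{w_m=0\}$, with $\Prob(v_m=0\mid p_m)=b(p_m)$, where $b(p)=\binom{p}{p/2}2^{-p}$ for even $p$ and $0$ otherwise, and $b(p)\gtrsim(p+1)^{-1/2}$ for even $p$. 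Restricting to the set $E$ of indices with $p_m,q_m$ both even and using Cauchy--Schwarz with $\sum_m p_m=\sum_m q_m=\ell$,
\[
\sum_m b(p_m)b(q_m)\;\gtrsim\;\sum_{m\in E}\frac{1}{\sqrt{(p_m+1)(q_m+1)}}\;\gtrsim\;\frac{|E|^2}{\ell+n}.
\]
A generating-function computation shows the events $\{m\in E\}$ are pairwise negatively correlated, so $|E|\gtrsim n$ a.a.s.\ by Chebyshev; thus a.a.s.\ $\sum_m b(p_m)b(q_m)\gtrsim n^2/\ell\to\infty$, and the conditional probability that no $m$ has $v_m=w_m=0$ is at most $\exp\!\big(-\sum_m b(p_m)b(q_m)\big)\to 0$.

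For part (2), I would extract a sufficient condition via a normal form. If every $v_k\neq0$, conjugating $\mathfrak u_n$ by $\mathrm{diag}(1,v_1,v_1v_2,\dots,v_1\cdots v_{n-1})$ carries $V_1$ to the regular nilpotent $N=\sum_k E_{k,k+1}$ and $W_1$ to $\sum_k r_k E_{k,k+1}$ with $r_k=w_k/v_k$; then $\operatorname{ad}_N^{\,m}$ acts as an $m$-fold forward difference, so $\operatorname{ad}_N^{\,n-2}\!\big(\sum_k r_kE_{k,k+1}\big)=\big(\sum_{i=0}^{n-2}(-1)^i\binom{n-2}{i}r_{i+1}\big)E_{1,n}$ up to sign. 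Hence $G_{\ell,n}$ is full step whenever all $v_k\neq0$ and $\sum_{i=0}^{n-2}(-1)^i\binom{n-2}{i}\,w_{i+1}/v_{i+1}\neq0$. When $\ell\in\omega(n^3)$, the counts $p_k,q_k$ concentrate near $\ell/n=\omega(n^2)$ and $\Prob(v_k=0)\lesssim\sqrt{n/\ell}$, so a union bound gives a.a.s.\ all $v_k\neq0$ (this is exactly where the $n^3$ threshold enters: all $n-1$ of these must hold simultaneously). Conditioning on $V$ and on $W$ except the signs of its $A_1^{\pm1}$-steps, the alternating sum becomes affine in $w_1$ with nonzero leading coefficient, hence vanishes with probability at most $\max_x\Prob(w_1=x)\lesssim\sqrt{n/\ell}\to0$. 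So a.a.s.\ both conditions hold and $G_{\ell,n}$ has full step.

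The main obstacle is the reduction lemma of the first paragraph: that the step of $G_{\ell,n}$ is controlled entirely by the Lie algebra generated by the two first-superdiagonal vectors. Establishing it carefully — in particular the induction identifying the $(1,n)$-entry of a weight-$(n-1)$ commutator with an iterated Lie bracket, and the fact that such a bracket exhausts the index set — is the most delicate part. Granting it, part (1) is a second-moment estimate whose only subtlety is the parity obstruction ($b(p)=0$ for odd $p$), handled by passing to $E$ and using Cauchy--Schwarz rather than a crude bound, and part (2) reduces to the Jordan-block normal form together with routine concentration for multinomial counts and simple random walks.
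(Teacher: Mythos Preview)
Your proposal is correct, and the algebraic backbone matches the paper's exactly: the paper's Lemma~\ref{lemma:nested_comm} and Example~\ref{ex:commutator} establish precisely your ``reduction lemma'' --- the $(1,n)$-entry of a weight-$(n-1)$ group commutator equals the $(1,n)$-entry of the corresponding iterated Lie bracket of the first superdiagonals --- so what you flag as the main obstacle is already done by direct computation in Section~\ref{sec:prelim}. Part~2 is then essentially the same argument in both: the paper exhibits the commutator $[W,[W,\dots,[W,V]]]$ and uses Example~\ref{ex:commutator} to write its $(1,n)$-entry as $\sum K_i v_i\prod_{j\neq i}w_j$, shows all $w_j\neq0$ a.a.s.\ by a union bound (this is where $\omega(n^3)$ enters), and kills the linear form via Lemma~\ref{lemma:hyperplane}; your diagonal conjugation to the regular nilpotent and forward-difference formula is the same computation with $V$ and $W$ swapped, and your anticoncentration step (condition on everything except the signs of the $A_1^{\pm1}$-steps) is exactly the mechanism behind Lemma~\ref{lemma:hyperplane}.

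Where you genuinely diverge is Part~1. The paper splits into $\ell\in O(n)$ (a balls-in-bins argument, Corollary~\ref{cor:On}) and $\ell\in\omega(n)\cap o(n^2)$ (Lemma~\ref{lem:notOn}), the latter relying on the local central limit theorem estimates of Section~\ref{sec:dist} (Lemma~\ref{lemma:prob_zero}, Lemma~\ref{lemma:two_zero}) to run a second-moment argument on the number of indices with $v_i=w_i=0$. Your argument --- condition on the multinomial occupation counts $(p_m),(q_m)$, use conditional independence of the sign walks to get $\Prob(\text{no matching zero})\le\exp\bigl(-\sum_m b(p_m)b(q_m)\bigr)$, and then bound the sum from below by $|E|^2/(\ell+n)$ via Cauchy--Schwarz over the even-even index set $E$ --- handles the whole range $\ell\in o(n^2)$ at once and bypasses the local CLT entirely. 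The negative-correlation claim for $\{m\in E\}$ reduces to $(n-5)(n-1)\le(n-3)^2$, so your Chebyshev step for $|E|\gtrsim n$ is fine. This is a cleaner and more elementary route than the paper's; the paper's approach, on the other hand, gives sharper asymptotics ($\Prob(v_k=w_k=0)\sim K^2 n/\ell$) that you don't extract.
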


Theorem \ref{theorem:full_step} is proven in Section \ref{sec:full}. These theorems are summarized by the following diagram.

\begin{figure}[ht]
\begin{tikzpicture}
\draw  (0,0)--(11,0);
\draw [line width=6,blue,opacity= .2] (3,0)--(0,0);
\node at (1.5,.8) {step $=1$};
\node at (1.5,.5) {a.a.s.};
\draw [line width=6,blue,opacity=.5] (3,0)--(7,0);
\node at (5,.8) {$1<$ step $<n-1$};
\node at (4.75,.5) {a.a.s.};
\draw [line width=6,blue,style= nearly opaque] (8,0)--(11,0);
\node at (9.5,.8) {step $=n-1$};
\node at (9.5,.5) {a.a.s.};
\draw (3,.2)--(3,-.2);
\node at (3,-.6) {$\sqrt n$};
\draw (7,.2)--(7,-.2); 
\node at (7,-.6) {$n^2$};
\node at (8,-.6) {$n^3$};
\node at (7.5,.8) {};
\draw [line width=6,red,opacity=.5] (7,0)--(8,0);
\draw (8,.2)--(8,-.2);
\end{tikzpicture}
\end{figure}

\subsection{Outline} As random walks, $V,W$ are given by $V=V_1V_2 \cdots V_\ell$ and $W= W_1 W_2 \cdots W_\ell$ where $V_i, W_i \in S$. To prove Theorem \ref{theorem:abelian}, we define a sufficient condition for commuting, called \textit{supercommuting}. 

\begin{defn} Let $V = V_1 V_2 \dotsm V_{\ell}$ and 
$W = W_1 W_2 \dotsm W_{\ell}$ where $V_i$ and  $W_i$ are elements in the $U_n$ generating set $S$. The words $V$ and $W$ {\bf supercommute} if every $V_i$ commutes with every $W_j$.
\end{defn}

We show that when $\ell \in o(n)$, supercommuting and commuting are asymptotically equivalent, and that the threshold for supercommuting is at $\ell = c \sqrt{n}$.

For Theorem \ref{theorem:full_step}
most of the results are a matter of analyzing the entries on the superdiagonals of our generators $V$ and $W$. The $(i,i+1)$ superdiagonal entry of $V$, which we denote by $v_i$, is the sum over the number of $A_i^{\pm1}$ that occur in the walk,  where $A_i$ contributes $+1$, and its inverse $-1$. Therefore the vector of superdiagonal entries is the endpoint of a random walk in $\mathbb{Z}^{n-1}$; while these are well studied objects, most of the study has been on walks in a fixed dimension $n$. In our case, both the dimension $n$, and the length of the walk are going to $\infty$. We gather these results in Section \ref{sec:dist}.

{\bf Acknowledgements.} This project began in the Random Groups Research Cluster held at Tufts University in the summer of 2014, supported by Moon Duchin's NSF CAREER award DMS-1255442. We would like to thank the participants of this cluster for their questions, conversations, and attention in the summer of 2014, with special thanks to Meng-Che ``Turbo'' Ho, Samuel Lelièvre, and Mike Shapiro for their time in more extensive conversations. We would also like to thank Benedek Valk\'o for suggestions on Section 3.  
The first author also acknowledges support from National Science Foundation grant
DMS-1207296.

\section{Preliminaries}\label{sec:prelim}

Many of the results in this paper depend on the superdiagonal entries $v_{i,i+1}$ and $w_{i,i+1}$ of $V$ and $W$. For this reason we adopt the shorthand $z_i:= z_{i, i+1}$ for any matrix $Z$.

The following proposition gives a necessary condition for commuting in $U_n$.

\begin{lemma}\label{lemma:commute}
Let $W=[w_{i,j}]$ and $V=[v_{i,j}]$ be matrices in $U_n$. If  $W$ and $V$ commute then $w_i v_{i+1}=w_{i+1} v_i$ for all $1 \le i \le n-2$.
\end{lemma}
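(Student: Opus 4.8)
The plan is to compare the $(i,i+2)$ entry of the products $WV$ and $VW$, since these depend only on the superdiagonal entries of $W$ and $V$ together with one ``second-superdiagonal'' entry of each, and the latter will cancel. Recall that for $U_n$ matrices the product rule gives $(WV)_{i,i+2} = \sum_{k} w_{i,k} v_{k,i+2}$, and in the range $i \le k \le i+2$ the only contributions come from $k = i$, $k = i+1$, $k = i+2$; since $w_{i,i} = v_{i+2,i+2} = 1$ this yields $(WV)_{i,i+2} = w_{i,i+2} + w_{i,i+1} v_{i+1,i+2} + v_{i,i+2}$, i.e.\ $(WV)_{i,i+2} = w_{i,i+2} + v_{i,i+2} + w_i v_{i+1}$ in the shorthand of Section~\ref{sec:prelim}.

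By the same computation with the roles of $W$ and $V$ reversed, $(VW)_{i,i+2} = v_{i,i+2} + w_{i,i+2} + v_i w_{i+1}$. If $W$ and $V$ commute then $WV = VW$, so in particular their $(i,i+2)$ entries agree; subtracting, the terms $w_{i,i+2}$ and $v_{i,i+2}$ cancel and we are left with $w_i v_{i+1} = v_i w_{i+1} = w_{i+1} v_i$. This holds for every $i$ with $1 \le i \le n-2$ (the range for which the index $i+2$ is at most $n$), which is exactly the claimed identity.

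There is essentially no obstacle here: the only thing to be careful about is the bookkeeping of which indices $k$ contribute to the sum $\sum_k w_{i,k} v_{k,i+2}$, namely that entries strictly below the diagonal vanish and the diagonal entries are $1$, so that the sum truncates to the three terms above. One could phrase this more conceptually by passing to the associated graded Lie algebra and noting that the degree-two part of $\log$ records $w_i v_{i+1} - v_i w_{i+1}$ along each such pair, but the direct matrix computation is shortest and entirely elementary.
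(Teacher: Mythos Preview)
Your proof is correct and takes essentially the same approach as the paper: a direct computation of the $(i,i+2)$ entry. The only cosmetic difference is that the paper phrases it via the commutator $C=VWV^{-1}W^{-1}$, observing that its second superdiagonal entries are $c_{i,i+2}=w_{i+1}v_i-w_iv_{i+1}$, whereas you compare $(WV)_{i,i+2}$ and $(VW)_{i,i+2}$ directly; these are equivalent since $C=I$ iff $WV=VW$.
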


\begin{proof}
This is a straightforward computation. The first superdiagonal of $C=VWV^{-1}W^{-1}$ vanishes and the second superdiagonal entries are given by 
$c_{i,i+2}= w_{i+1} v_i - w_i v_{i+1}$.
\end{proof}

\begin{cor}\label{cor:elementary_commute}
The elementary superdiagonal matrices $A_i^{\pm1}$, $A_j^{\pm1}$ commute if and only if  $|i-j| \neq 1$.
\end{cor}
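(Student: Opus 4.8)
The strategy is to deduce this directly from Lemma \ref{lemma:commute}, which gives the necessary condition $w_i v_{i+1} = w_{i+1} v_i$ on superdiagonals, together with an explicit matrix computation to handle the converse. I would split into the two implications. For the forward direction, suppose $A_i^{\pm 1}$ and $A_j^{\pm 1}$ commute; I want to show $|i-j| \neq 1$. It suffices to rule out $|i-j| = 1$, so by symmetry assume $j = i+1$. Write $W = A_i^{\pm 1}$ and $V = A_{i+1}^{\pm 1}$. The only nonzero superdiagonal entry of $W$ is $w_i = \pm 1$ (all other $w_k = 0$), and the only nonzero superdiagonal entry of $V$ is $v_{i+1} = \pm 1$. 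Applying Lemma \ref{lemma:commute} with this index $i$: the condition $w_i v_{i+1} = w_{i+1} v_i$ becomes $(\pm 1)(\pm 1) = 0 \cdot 0 = 0$, which is false. Hence $A_i^{\pm 1}$ and $A_{i+1}^{\pm 1}$ do not commute, establishing the contrapositive.

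For the converse, suppose $|i-j| \neq 1$ and show $A_i^{\pm 1} A_j^{\pm 1} = A_j^{\pm 1} A_i^{\pm 1}$. If $i = j$ this is trivial since powers of a single matrix commute, so assume $|i - j| \geq 2$, and WLOG $i < j$, so $j \geq i + 2$. Here I would just compute both products directly using the well-known formula for products of elementary matrices: $E_{i,i+1}^{a} = I_n + a E_{i,i+1}$, and $E_{i,i+1} E_{j,j+1} = E_{i,i+1} + E_{j,j+1}$ as a matrix identity when $i+1 \neq j$ and $j + 1 \neq i$ (i.e. when there is no ``chaining'' of the off-diagonal entries), because the product $E_{i,i+1} E_{k,\ell}$ picks up a nonzero contribution only when $i+1 = k$. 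Since $j \geq i+2 > i+1$ and $i < j$, neither $i + 1 = j$ nor $j+1 = i$ holds, so $A_i^{\pm 1} A_j^{\pm 1} = I_n \pm E_{i,i+1} \pm E_{j,j+1} = A_j^{\pm 1} A_i^{\pm 1}$, as desired.

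The only mild subtlety — and the step I would be most careful about — is the converse, specifically making sure the case $|i-j| \geq 2$ really kills all cross terms in the matrix product: one must check that $E_{i,i+1} E_{j,j+1} = 0$ as a matrix (not just that it is symmetric in $i,j$), which holds precisely because $i + 1 \neq j$. The forward direction is essentially immediate from Lemma \ref{lemma:commute} once one observes that an elementary superdiagonal matrix has exactly one nonzero superdiagonal entry, so I would keep that part brief. No deep idea is required; the content is entirely the bookkeeping of which index the Lemma should be applied at and the vanishing of elementary-matrix cross terms.
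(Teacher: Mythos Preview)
Your proof is correct and matches the paper's intent: the paper states this as an unproved corollary of Lemma~\ref{lemma:commute}, and your argument---using the lemma to rule out $|i-j|=1$ and then checking directly that nonadjacent elementary matrices commute---is exactly the natural elaboration. One small cleanup: you overload the symbol $E_{i,i+1}$, using it both for the paper's elementary matrix $A_i = I_n + e_{i,i+1}$ and for the matrix unit $e_{i,i+1}$ itself, so the displayed identities (e.g.\ ``$E_{i,i+1}E_{j,j+1}=E_{i,i+1}+E_{j,j+1}$'') are not literally true in either convention; just write $A_i^{\pm1}A_j^{\pm1} = I_n \pm e_{i,i+1} \pm e_{j,j+1}$ and note that the cross term $e_{i,i+1}e_{j,j+1}$ vanishes since $i+1\neq j$.
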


Next we study the  $k$th commutator subgroup of $G_{\ell,n}=\langle V,W\rangle$.
Note that in a nilpotent group the $k$th commutator subgroup is generated by 
all $m$-fold commutators for $m \geq k$ of the form 
$$[B_1 [B_2 \cdots [B_{m}, B_{m+1}]]] \cdot ]$$
where the $B_i$ are chosen from a fixed generating set (see for example Lemma 1.7 in \cite{Hall}). 
Therefore to test that $G_{\ell,n}$ is $k$-step nilpotent we only need to check 
 that $[B_1 [B_2 \cdots [B_{k}, B_{k+1}]]] \cdot ]=I$ when $B_i \in \{ V,W\}$. 
 

In Lemma \ref{lemma:commute} we noted that taking a commutator resulted in a matrix with zeros along the first superdiagonal. In the next lemma we show that taking a $k^{th}$ commutator results in zeros on the first $k$ superdiagonals. We also give a recursive formula for the entries on the $(k+1)^{\textrm{st}}$ superdiagonal using iterated two dimensional determinants.

\begin{lemma}\label{lemma:nested_comm} Let $C^k=[c^k_{i,j}]$ be a $k$-fold commutator of two matrices $V,W$; then $c^k_{i,j}=0$ when $i <  j \leq i+k$
and
\begin{equation}\label{eqn:nested}c^k_{i,k+i+1} = \det{\begin{bmatrix} z_{i,i+1} & c^{k-1}_{i,k+i}  \\   z_{k+i,k+i+1} & c^{k-1}_{i+1,k+i+1} \end{bmatrix} }\end{equation}
where $Z=[z_{i,j}]$ and either $Z=V$ or $Z=W$. 
\end{lemma}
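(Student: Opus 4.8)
The plan is to argue by induction on $k$. The base case $k=1$ is exactly Lemma \ref{lemma:commute}: a single commutator $C^1 = VWV^{-1}W^{-1}$ has vanishing first superdiagonal (so $c^1_{i,i+1}=0$, i.e. $c^1_{i,j}=0$ for $i<j\le i+1$), and the second-superdiagonal entries are $c^1_{i,i+2} = w_{i+1}v_i - w_i v_{i+1} = \det\begin{bmatrix} v_{i,i+1} & w_{i,i+1} \\ v_{i+1,i+2} & w_{i+1,i+2}\end{bmatrix}$, which matches \eqref{eqn:nested} with $k=1$ and $c^0$ interpreted as the superdiagonal entries of $V$ or $W$ (note $c^0_{i,j} = z_{i,j}$). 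Here I should double-check the indexing convention so that ``either $Z=V$ or $Z=W$'' is consistent — the determinant in the $k=1$ case uses one column from $V$ and one from $W$, so the statement of the lemma must be read with $C^{k-1}$ being the $(k-1)$-fold commutator whose outermost bracket variable is the one \emph{not} equal to $Z$.

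For the inductive step, suppose $C^k = [Z, C^{k-1}] = Z C^{k-1} Z^{-1} (C^{k-1})^{-1}$ where $Z\in\{V,W\}$ and $C^{k-1}$ is a $(k-1)$-fold commutator with, by the inductive hypothesis, $c^{k-1}_{i,j}=0$ for $i<j\le i+(k-1)$. First I would show the first $k$ superdiagonals of $C^k$ vanish. Write $Z = I + N$ where $N$ is strictly upper triangular with $N$ supported on the first superdiagonal (since $Z$ is elementary... — actually $Z$ is a product, so $N = Z - I$ is a general strictly upper triangular matrix, but its first superdiagonal entries are $z_i$). Expand $C^k = Z C^{k-1} Z^{-1}(C^{k-1})^{-1}$ and track which superdiagonal each term lands on: since $C^{k-1}$ and $(C^{k-1})^{-1}$ are both supported on superdiagonals $\ge k$ (the inverse of a unipotent matrix whose nontrivial part starts at the $k$-th superdiagonal has the same property), and $Z, Z^{-1}$ are supported on superdiagonals $\ge 0$, a short computation of the commutator — the identity $AB - BA$ type cancellation — shows that the superdiagonal-$k$ part of $C^k$ cancels, leaving the first nonzero contributions on superdiagonal $k+1$. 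This is the standard ``commutator raises filtration degree'' argument adapted to this explicit matrix setting.

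Then, to extract the $(k+1)$-st superdiagonal entry, I would compute $c^k_{i,k+i+1}$ directly from $C^k = Z C^{k-1} Z^{-1}(C^{k-1})^{-1}$, keeping only terms that contribute to position $(i,k+i+1)$. Using $Z^{-1} = I - N + N^2 - \cdots$ and the fact that $C^{k-1}$ starts on superdiagonal $k$, the surviving terms pair a single entry of $N$ (i.e. some $z_j$) with a single entry of $C^{k-1}$ on its leading superdiagonal $k$; the four products $Z C^{k-1}$, $C^{k-1}Z^{-1}$, etc., combine so that the coefficient of $z_{i,i+1}$ is $c^{k-1}_{i+1,k+i+1}$ and the coefficient of $z_{k+i,k+i+1}$ is $-c^{k-1}_{i,k+i}$, yielding precisely the $2\times 2$ determinant in \eqref{eqn:nested}. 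The main obstacle I anticipate is purely bookkeeping: being careful about \emph{which} matrix ($V$ or $W$) supplies the $z$-column versus the $c^{k-1}$-column, and making sure the signs in the determinant come out consistently; the combinatorics of which superdiagonal each matrix product lands on is routine but must be done with enough care that the leading-order cancellation and the surviving cross terms are both correctly identified. Once the bookkeeping is set up, the computation is a finite, mechanical expansion.
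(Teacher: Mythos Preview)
Your proposal is correct and follows essentially the same approach as the paper: induction on $k$ with base case Lemma~\ref{lemma:commute}, then computing $C^k = Z C^{k-1} Z^{-1} (C^{k-1})^{-1}$ and reading off the $(k+1)$-st superdiagonal. The paper's version is terser---it simply asserts the vanishing of the first $k$ superdiagonals and states the resulting entry, aided by a diagram---while you spell out the filtration/Neumann-series mechanics more explicitly, but the content is the same.
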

\begin{proof} We prove this result by induction, where the base case is given in the proof of  Lemma \ref{lemma:commute}. Assume $C^{k-1}$ is given, and for convenience let $K = C^{k-1}$. Since the first $k-1$ superdiagonals of $K$ contain all zeros, computing $C^k = ZKZ^{-1}K^{-1}$ yields zeros on the first $k$ superdiagonals, and on the $(i,i+k+1)$-diagonal we have   $$z_{i,i+1} c^{k-1}_{i+1,k+i+1} -z_{k+i,k+i+1}c^{k-1}_{i,k+i}.$$
To help see this, note that when the first nonzero superdiagonals of $Z, C^{k-1},C^{k}$ are overlayed the resulting matrix is the following. 
$$\bm
\ddots &\vdots&\vdots&\vdots&\vdots&\vdots& \\
&1 & z_{i,i+1} & \cdots & c^{k-1}_{i,k+i} & c^k_{i,i+k+1} & \cdots \\
&& 1  & \cdots&& c^{k-1}_{i+1,k+i+1} & \cdots\\
&&& \ddots  &\vdots&\vdots&  \cdots \\
&&&  & 1& z_{k+i,k+i+1}&  \cdots \\
&&&&&1&  \cdots \\
&&&&&&  \ddots \\

\fm$$
\end{proof}

Our first application of Lemma \ref{lemma:nested_comm} is the following lemma, which shows that $G_{\ell, n}$ cannot be full step if $V,W$ have a matching pair of zeros on their superdiagonals. 
\begin{lemma}\label{lemma:not_full} Given $G_{\ell,n}=\langle V,W\rangle$,
if there is some $1 \leq d \leq n-1$ such that $v_d=w_d=0$ then the step of $G_{\ell,n}$ is bounded by $\max\{ d-1, n-1-d\}$.
\end{lemma}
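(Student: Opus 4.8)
The plan is to show that if $v_d = w_d = 0$, then any sufficiently long nested commutator of $V$ and $W$ has all its entries vanish on and near the $(k+1)$st superdiagonal, by exploiting the recursion in Lemma~\ref{lemma:nested_comm}. The key observation is that the $d$th superdiagonal entry acts as a ``barrier'': the determinant formula \eqref{eqn:nested} for $c^k_{i,k+i+1}$ involves the superdiagonal entries $z_{i,i+1}$ and $z_{k+i,k+i+1}$ of $Z \in \{V,W\}$. When $k+i = d$, i.e. the lower-left entry of the $2\times 2$ matrix is $z_{d,d+1} = z_d$, and when $i = d$, i.e. the upper-left entry is $z_d$, these vanish for \emph{both} choices $Z = V$ and $Z = W$. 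So the barrier at position $d$ prevents nonzero entries from propagating across it.

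Concretely, first I would set $k = \max\{d-1, n-1-d\}$ and show that every $k$-fold commutator $C^k$ of $V$ and $W$ is the identity, which by the discussion after Corollary~\ref{cor:elementary_commute} (that $k$-step nilpotence is tested on nested commutators $[B_1[B_2\cdots[B_k,B_{k+1}]]\cdots]$ with $B_i \in \{V,W\}$) gives that the step of $G_{\ell,n}$ is at most $k$. By Lemma~\ref{lemma:nested_comm}, a $k$-fold commutator already has zeros on the first $k$ superdiagonals, so it suffices to show that $c^k_{i,k+i+1} = 0$ for all valid $i$ (i.e. $1 \le i \le n-1-k$). I would prove, by induction on $m$, the stronger claim that for every $m$-fold commutator $C^m$ and every $i$, we have $c^m_{i,m+i+1} = 0$ whenever the interval of indices $\{i, i+1, \dots, m+i+1\}$ (the ``span'' of that entry) contains $d$ in a way that... more precisely: whenever $i < d$ and $m+i+1 > d$, equivalently $i \le d-1$ and $i \ge d-1-m$, so that the entry ``straddles'' the barrier.

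The inductive step uses \eqref{eqn:nested}: pick $Z = V$ (say). If $i = d$ then $z_{i,i+1} = v_d = 0$, killing the first column of the determinant, so $c^m_{i,m+i+1} = 0$. If $k+i = d$ in the notation of \eqref{eqn:nested} — wait, more carefully, for an $m$-fold commutator the relevant row index is $m+i$ appearing as $z_{m+i,m+i+1}$: if $m+i = d$ then $z_{m+i,m+i+1} = v_d = 0$. In the remaining case the entry straddles $d$ strictly, i.e. $i < d < m+i$, wait we need $i<d$ and $m+i+1>d$; then both subentries $c^{m-1}_{i,m+i}$ and $c^{m-1}_{i+1,m+i+1}$ of the smaller commutator straddle $d$ (one has span $[i, m+i]$, the other $[i+1, m+i+1]$, and each still contains $d$ in its interior since $i < d < m+i$ forces $i \le d-1 \le m+i-1$ and $i+1 \le d \le m+i$), so by induction they vanish and hence $c^m_{i,m+i+1} = 0$. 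Finally, for $m = k = \max\{d-1,n-1-d\}$ and any valid $i$ with $1 \le i \le n-1-k$, one checks that the entry necessarily straddles $d$: since $i \ge 1$ and $k \ge d-1$ we get... actually one needs to verify $i \le d-1$ and $i + k + 1 \ge d+1$; the first holds because $i \le n-1-k \le n-1-(n-1-d) = d$ and a boundary case $i=d$ is handled by the $v_d$ vanishing directly, and the second because $i+k+1 \ge 1 + (d-1) + 1 = d+1$. So all $c^k_{i,k+i+1}$ vanish, $C^k = I$, and the step is at most $k$.

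I expect the main obstacle to be the bookkeeping at the boundary: making the ``straddling'' invariant precise enough that the induction goes through cleanly at both ends of the superdiagonal (handling separately the cases $i = d$, $m+i = d$, and strict straddling), and checking that for $m = k$ every surviving entry index $i$ does fall into one of these cases given $k = \max\{d-1, n-1-d\}$. The choice of this particular $k$ is exactly what is needed so that no entry $c^k_{i,k+i+1}$ can ``fit entirely'' on one side of the barrier at $d$.
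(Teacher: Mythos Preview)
Your approach is essentially the paper's: induct on the recursion \eqref{eqn:nested} to show that the entries of the first nonzero superdiagonal which ``straddle'' position $d$ all vanish, and then observe that for $k=\max\{d-1,n-1-d\}$ every valid index $i$ straddles $d$. One small slip: when $i=d$ you say ``$z_{i,i+1}=v_d=0$, killing the first column'' --- it only kills the top-left entry of the $2\times2$ matrix; you also need $c^{m-1}_i=0$, which does hold by your inductive hypothesis (the span $[d,d+m]$ straddles $d$). The case $m+i=d$ is symmetric. With that fix, the induction goes through and matches the paper's row/column argument exactly.

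The more serious gap --- which the paper's own proof shares --- is the sentence ``it suffices to show that $c^k_{i,k+i+1}=0$ for all valid $i$.'' Lemma~\ref{lemma:nested_comm} tells you the first $k$ superdiagonals of $C^k$ vanish, and your argument kills the $(k+1)$st as well, but nothing is said about superdiagonals $k+2,\dots,n-1$, and those need not be zero. Concretely: take $n=4$, $d=2$, $V=A_1$, and $W=A_2A_3A_2^{-1}A_3^{-1}=E_{2,4}$. Then $v_2=w_2=0$, yet $[V,W]=E_{1,4}\neq I$, so the step of $\langle V,W\rangle$ is $2$, not $\le\max\{1,1\}=1$. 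Thus the bound asserted in the lemma is actually false as stated.

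What the argument \emph{does} prove is that for every $m\ge\max\{d-1,n-1-d\}$ the entire $(m+1)$st superdiagonal of any $m$-fold commutator in $V,W$ vanishes. Applied at $m=n-2$, where that superdiagonal is the only one in play, this gives $C^{n-2}=I$ for every $(n-2)$-fold commutator, hence step $\le n-2<n-1$; and ``not full step'' is the only consequence of this lemma used later in the paper.
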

\begin{proof}
Recall that $v_d=v_{d,d+1}$ and similarly  $w_d=w_{d,d+1}$.
By Lemma \ref{lemma:nested_comm} we have that  for $C^1=[V,W]$ 
$$c^1_{d-1,d+1}= \det{\begin{bmatrix} v_{d-1,d} & w_{d-1,d}  \\   v_{d,d+1} & w_{d,d+1} \end{bmatrix} }=0$$
since the bottom row of this two by two matrix has both entries to zero. Similarly
$$c^1_{d,d+2}= \det{\begin{bmatrix} v_{d,d+1} & w_{d,d+1}  \\   v_{d+1,d+2} & w_{d+1,d+2} \end{bmatrix} }=0$$
since the top row of the two by two matrix has entries both equal to zero.
Inductively, by Equation \ref{eqn:nested}, we have that $c^k_{d-k,d+1}=c^k_{d,d+k+1}=0.$ 
This is because either the top or bottom row of the matrix in Equation \ref{eqn:nested} will have both entries equal to zero. Alternatively, if both $c^{k-1}_{i,k+i}$ and $c_{i+1,k+i+1}^{k-1}$ are zero then $c^k_{i,k+i+1}=0$ since then the righthand column of the matrix in Equation \ref{eqn:nested} will have both entries zero. In particular, $c^k_{i,j}$ is zero if $k > \max\{d-1,n-1-d\}.$ 

\end{proof}

Lemma \ref{lemma:nested_comm} also leads us to define a modified determinant product which gives us a method to calculate the entries of the first nonzero superdiagonal of a iterated commutator product of upper triangular matrices given their first superdiagonal entries.
 
\begin{defn} Let $\vec{a}=(a_1, \ldots, a_{s})$ and $\vec{b}=(b_1, \ldots, b_m)$ be vectors with $s \geq m$ and set $s-m=p$; then $[\vec{a}\ \ \vec{b}]$ is the $m-1$ dimensional vector given by
$$[\vec{a}\ \ \vec{b}] := \left(  \bvm a_1 & b_1 \\ a_{p+2} & b_2 \fvm ,  \bvm a_2 & b_2 \\ a_{p+3} & b_3 \fvm, \cdots,  \bvm a_{m-1} & b_{m-1} \\ a_{p+m} & b_m \fvm  \right).$$
%
\end{defn}
\begin{lemma} Let $\vec{b}_i$ be the vector containing the $n-1$ main superdiagonal entries of an $n \times n$ unipotent matrix $B_i$ labeled from top left to bottom right. Then the $(k+1)^{\textrm{st}}$ superdiagonal entries of the $k$-fold commutator $[B_1 [B_2 \cdots [B_{k}, B_{k+1}]]]$ are given by the $(n-k)$ dimensional vector
$$[\vec{b}_1 [ \vec{b}_2  \cdots [\vec{b}_k \ \ \vec{b}_{k+1}] \cdots ].$$
\end{lemma}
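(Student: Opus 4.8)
The plan is to prove this by induction on $k$, using Lemma~\ref{lemma:nested_comm} as the engine. The base case $k=1$ is exactly the content of Lemma~\ref{lemma:commute} together with the formula $c_{i,i+2} = w_{i+1}v_i - w_i v_{i+1} = \bigl|\begin{smallmatrix} v_i & w_i \\ v_{i+1} & w_{i+1}\end{smallmatrix}\bigr|$, which one checks matches the definition of $[\vec{b}_1\ \ \vec{b}_2]$ (here both input vectors have length $n-1$, so $p=0$ and the $j$th entry is $\bigl|\begin{smallmatrix} (b_1)_j & (b_2)_j \\ (b_1)_{j+1} & (b_2)_{j+1}\end{smallmatrix}\bigr|$, giving an $(n-2)$-dimensional vector as required).

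For the inductive step, set $K = [B_2 [B_3 \cdots [B_k, B_{k+1}]]\cdots]$, so that the statement to prove is about $C = [B_1, K] = B_1 K B_1^{-1} K^{-1}$. By the inductive hypothesis, the first $k-1$ superdiagonals of $K$ vanish and the $k$th superdiagonal of $K$ is the $(n-k)$-dimensional vector $\vec{c} := [\vec{b}_2 [\vec{b}_3 \cdots [\vec{b}_k\ \ \vec{b}_{k+1}]\cdots]$. Now I apply Lemma~\ref{lemma:nested_comm} with $V = B_1$, $W = K$, and the $(k-1)$-fold commutator role played by $K$ itself (i.e. I use the lemma's recursion with ``$C^{k-1}$'' $= K$, which has $k-1$ vanishing superdiagonals): the lemma gives that $C$ has its first $k$ superdiagonals zero, and that its $(k+1)$st superdiagonal entry at position $(i, i+k+1)$ is
$$ \det\begin{bmatrix} (b_1)_i & c^{k-1}_{i,\,k+i} \\ (b_1)_{k+i} & c^{k-1}_{i+1,\,k+i+1} \end{bmatrix}, $$
where $c^{k-1}_{i,k+i}$ and $c^{k-1}_{i+1,k+i+1}$ are the $i$th and $(i+1)$st entries of $\vec{c}$. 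The key bookkeeping point is that $\vec{b}_1$ has length $n-1$ and $\vec{c}$ has length $n-k$, so in the modified determinant product $[\vec{b}_1\ \ \vec{c}]$ we have $p = (n-1) - (n-k) = k-1$; thus its $i$th entry is $\bigl|\begin{smallmatrix} (b_1)_i & c_i \\ (b_1)_{i + p + 1} & c_{i+1}\end{smallmatrix}\bigr| = \bigl|\begin{smallmatrix} (b_1)_i & c_i \\ (b_1)_{i+k} & c_{i+1}\end{smallmatrix}\bigr|$, which is exactly the determinant above. Hence the $(k+1)$st superdiagonal of $C$ equals $[\vec{b}_1\ \ \vec{c}] = [\vec{b}_1 [\vec{b}_2 \cdots [\vec{b}_k\ \ \vec{b}_{k+1}]\cdots]$, and it has length $(n-k) - 1 = n - (k+1)$, completing the induction.

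The main obstacle I anticipate is purely notational: keeping the index offset $p$ straight through the nesting, and making sure that the ``$Z = V$ or $Z = W$'' ambiguity in Equation~\eqref{eqn:nested} does not cause trouble. In our setting $Z$ is always the outermost matrix $B_1$ (playing the role of $V$ in Lemma~\ref{lemma:nested_comm}), so there is a definite choice and the formula is unambiguous; one should remark that the other choice ($Z = K$) would give the same superdiagonal because the $\det$ in Equation~\eqref{eqn:nested} is, up to sign conventions already absorbed, symmetric under swapping the two matrices' superdiagonal vectors on the outer level — but it is cleanest to just fix $Z = B_1$ throughout and not belabor this. A secondary check worth writing out once is the edge behavior when $k+1 > n-1$, i.e. when $n - (k+1) \le 0$: then the claimed superdiagonal is empty, consistent with the $k$-fold commutator already being a higher-step object, and the modified determinant product of appropriately short vectors is the empty vector, so the statement holds vacuously.
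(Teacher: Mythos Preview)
Your proof is correct and is exactly the argument the paper has in mind: the paper's own ``proof'' is the single sentence that the lemma ``can be proved by direct computation or by inspecting the proof of Lemma~\ref{lemma:nested_comm},'' and your induction spells out precisely that inspection. The bookkeeping on the offset $p=k-1$ and the identification of $Z$ with the outermost matrix $B_1$ are handled correctly (the recursion in Lemma~\ref{lemma:nested_comm} only uses that the inner factor has its first $k-1$ superdiagonals zero, so it applies verbatim with $K$ in place of $C^{k-1}$).
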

This lemma can be proved by direct computation or by inspecting the proof of Lemma \ref{lemma:nested_comm}. To illustrate this result, consider the following examples, the second of which will be used in Section \ref{sec:full}.

\begin{ex} We consider the commutator $[D,[C,[A,B]]]$ where the superdiagonal entries of $A$ are given by $(a_1, \ldots a_{n-1})$ and similarly for $B,C,D$. The first three superdiagonals are all zero while
the fourth superdiagonal has entries given by 
$$\bm \bvm d_1 &
\bvm c_1 &
\bvm a_1 & b_1 \\
a_2 & b_2
\fvm\\
\\

c_3 &

 \bvm a_2 & b_2 \\
a_3 & b_3
\fvm
\fvm
\\
\\

d_4 & 
\bvm c_2 &
 \bvm a_2 & b_2 \\
a_3 & b_3
\fvm\\
\\

c_4 & 

 \bvm a_3 & b_3 \\
a_4 & b_4
\fvm
\fvm
\\
\fvm &,
\cdots , &
 \bvm d_{n-4} &
\bvm c_{n-4} &
\bvm a_{n-4} & b_{n-4} \\
a_{n-3} & b_{n-3}
\fvm\\
\\

c_{n-2} & 

 \bvm a_{n-3} & b_{n-3} \\
a_{n-2} & b_{n-2}
\fvm
\fvm
\\
\\

d_{n-1} & 
\bvm c_{n-3} &
 \bvm a_{n-3} & b_{n-3} \\
a_{n-2} & b_{n-2}
\fvm\\
\\

c_{n-1} & 

 \bvm a_{n-2} & b_{n-2} \\
a_{n-1} & b_{n-1}
\fvm
\fvm
\\
\fvm 
\fm.
$$ 
\end{ex}
\begin{ex}\label{ex:commutator} Consider the commutator
\[ \underbrace{[W, [W, \dotsc [W}_\text{$n-2$},V]]] \]
where $V,W$ are $n \times n$ upper triangular matrices with main superdiagonals given by the vectors $(v_1,\cdots, v_{n-1})$ and $(w_1, \cdots, w_{n-1})$ respectively.  Using the iterated determinant formula we see that the first nonzero superdiagonal has only one entry  and  is given by 
\[ K_1 v_1 w_2 w_3 \dotsm w_{n-1} + K_2 w_1 v_2 w_3 \dotsm w_{n-1} + \dotsb + K_{n-1} w_1 \dotsm w_{n-2} v_{n-1}\]
where each $K_i = \binom{n-1}{i}$ with alternating signs.
\end{ex}

\section{Distribution of the superdiagonal entries}\label{sec:dist}
In this section we examine the probability of finding zeroes on the superdiagonals of $V$ and $W$ when $\ell \in \omega(n)$. In order to emphasize the dependence on $\ell$ we write $V^\ell, W^\ell$ instead of $V,W$ and $v_k^\ell, w_k^\ell$ instead of $v_k,w_k$ for the superdiagonal entries. If we fix $n$ and $k$ we can model $v^\ell_k$ as the endpoint of a lazy random walk in $\Z$:
\[ v^\ell_k = \sum_{j = 1}^\ell x_j \]
where $x_j = \pm 1$ with probability $1/2n$ each and $x_j = 0$ with probability $(n-1)/n$. 
Likewise for any two $k_1\neq k_2$ we have an induced lazy random walk on $\Z^2$:
\[ \begin{pmatrix} v^\ell_{k_1} \\ v^\ell_{k_2} \end{pmatrix} = \sum_{j = 1}^\ell \begin{pmatrix} x_j \\ y_j \end{pmatrix} \]
where $(x_j, y_j)= (\pm 1, 0)$ or $(0, \pm 1)$ with probability $1/2n$ each, and $(x_j, y_j)= (0, 0)$ with probability $(n-2)/n$. 

Our goal is to estimate $\Prob(v^\ell_k=0)$ and $\Prob(v^\ell_{k_1}=v^\ell_{k_2}=0)$.  The proofs of the following lemmas follow the standard proofs of the local central limit theorem for lazy random walks on $\Z^d$   where special attention is paid to the dependence of the estimates on $n$. (See for example Section 2.3 in \cite{Lawler}). We reproduce them here because we were not able to find this exact formulation in the literature. Morally
we rewrite everything in terms of $\lambda=\ell/n$  and provide error estimates. We can do this as long as $\lambda \to \infty$---that is, when $\ell \in \omega(n)$.  

To make the results in this section more applicable later on, we define a constant $K = 1/\sqrt{2\pi}$.

\begin{lemma}\label{lemma:prob_zero} Suppose $\ell \in  \omega(n)$. Then for a fixed $1 \leq k \leq n$ we have 
\[ \Prob(v^\ell_k = 0) \sim K \sqrt{\frac{n}{\ell}} \]
\end{lemma}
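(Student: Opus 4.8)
The plan is to prove this via a local central limit theorem estimate, following the Fourier-analytic approach but tracking the $n$-dependence carefully. The random variable $v_k^\ell$ is a sum of $\ell$ i.i.d.\ steps $x_j$ with $\Prob(x_j = \pm 1) = 1/(2n)$ and $\Prob(x_j = 0) = (n-1)/n$. First I would write the probability mass at $0$ as a Fourier integral:
\[ \Prob(v_k^\ell = 0) = \frac{1}{2\pi} \int_{-\pi}^{\pi} \phi(\theta)^\ell \, d\theta, \]
where $\phi(\theta) = \Ex[e^{i\theta x_j}] = 1 - \frac{1}{n} + \frac{1}{n}\cos\theta = 1 - \frac{1 - \cos\theta}{n}$ is the characteristic function of a single step. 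The goal is to show this integral is asymptotic to $K\sqrt{n/\ell}$ with $K = 1/\sqrt{2\pi}$.

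The key idea is to rescale. Substituting $\theta = u\sqrt{n/\ell}$ (this is the natural scaling since the variance of $v_k^\ell$ is $\ell/n$), the integral becomes $\frac{1}{2\pi}\sqrt{n/\ell}\int \phi(u\sqrt{n/\ell})^\ell \, du$ over $|u| \le \pi\sqrt{\ell/n}$. Since $\ell \in \omega(n)$, the quantity $\lambda = \ell/n \to \infty$, so the domain of integration expands to all of $\R$. The main steps are then: (1) a pointwise limit, showing that for fixed $u$, $\phi(u\sqrt{n/\ell})^\ell \to e^{-u^2/2}$ — this follows from $1 - \cos\theta \sim \theta^2/2$, giving $\phi(u\sqrt{n/\ell}) = 1 - \frac{u^2}{2\ell}(1 + o(1))$ and hence the $\ell$-th power tends to $e^{-u^2/2}$; and (2) a domination/tail estimate uniform in $n$ so that dominated convergence applies and the tails ($|u|$ large, or equivalently $|\theta|$ bounded away from $0$) contribute negligibly. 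For step (2) one uses that $1 - \frac{1-\cos\theta}{n} \le e^{-(1-\cos\theta)/n} \le e^{-c\theta^2/n}$ for $|\theta| \le \pi$ (with $c = 2/\pi^2$ or similar from the bound $1 - \cos\theta \ge 2\theta^2/\pi^2$), so $|\phi(\theta)^\ell| \le e^{-c\ell\theta^2/n}$; after rescaling this is $e^{-cu^2}$, an integrable dominating function independent of $n$. Then $\int_\R e^{-u^2/2}\,du = \sqrt{2\pi}$ yields $\Prob(v_k^\ell = 0) \sim \frac{1}{2\pi}\sqrt{n/\ell}\cdot\sqrt{2\pi} = \frac{1}{\sqrt{2\pi}}\sqrt{n/\ell} = K\sqrt{n/\ell}$, as claimed.

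I expect the main obstacle to be the bookkeeping required to make the convergence genuinely \emph{uniform} in $n$, since $n$ and $\ell$ both vary simultaneously — this is precisely why the authors note they could not simply cite the standard reference. Concretely, one must check that the error in the pointwise approximation $\phi(u\sqrt{n/\ell})^\ell = e^{-u^2/2}(1 + o(1))$ is controlled uniformly for $u$ in compact sets as $n, \ell \to \infty$ with $\ell/n \to \infty$, and that the contribution from $|\theta|$ near $\pm\pi$ is exponentially small relative to $\sqrt{n/\ell}$. Writing $\phi(\theta) = \exp\bigl(\log(1 - \tfrac{1-\cos\theta}{n})\bigr)$ and expanding the logarithm with explicit remainder (valid since $\frac{1-\cos\theta}{n} \le \frac{2}{n} \to 0$) makes this tractable: one gets $\ell\log\phi(u\sqrt{n/\ell}) = -\frac{u^2}{2} + O(u^4 n/\ell) + O(\ell u^2/n^2 \cdot (n/\ell))$, and each error term vanishes since $n/\ell \to 0$. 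The remaining work — splitting the integral into a central region $|\theta| \le \delta$ and a tail, bounding each — is routine once the scaling is set up correctly.
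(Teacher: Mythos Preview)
Your proposal is correct and follows essentially the same approach as the paper: write $\Prob(v_k^\ell=0)$ as the Fourier integral $\tfrac{1}{2\pi}\int_{-\pi}^{\pi}\phi(\theta)^\ell\,d\theta$ with $\phi(\theta)=1-\tfrac{1}{n}(1-\cos\theta)$, rescale via $\theta=s\sqrt{n/\ell}$ so that $\lambda=\ell/n\to\infty$, and control the tails using the uniform bound $\phi(\theta)\le e^{-b\theta^2/n}$. The paper in fact omits the details here and simply refers to the identical computation carried out for the two-dimensional version in the next lemma, where the same Taylor expansion of $\log\phi$ and the same splitting into central and tail regions appear.
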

\begin{proof}
We begin by noting that  
the characteristic function of $x_j$ is given by
$$\phi(t)=\Ex(e^{tix_j})=1-\frac{1}{n} + \frac{1}{2n}(e^{it}+e^{-it})=1-\frac{1}{n}(1-\cos{t})$$

and the characteristic function of $v_k^\ell$ which is
$$\phi(t)^\ell = \left(1- \frac{1}{n}(1- \cos{t})\right)^\ell.$$
Therefore 
$$P(v^\ell_k=0)= \frac{1}{2\pi} \int_{-\pi}^{\pi}  \left(1- \frac{1}{n}(1- \cos{t})\right)^\ell dt.$$

The methods used to estimate this integral are identical to the ones used in the more general proof of Lemma \ref{lemma:two_zero} below so we do not produce them here. The above integral is transformed to 
$$P(v_k^\ell = 0 )= \frac{\sqrt{\frac{n}{\ell}}}{\sqrt{2\pi}}\left( \int_\R e^{-s^2/2}ds + o(1)\right) $$
\end{proof}

Since $v^\ell_k$ and $w^\ell_k$ are independent we have the following corollary:

\begin{cor}\label{cor:prob_matching}
Suppose $\ell \in \omega(n)$. For fixed $k$, $\Prob(v_k = w_k = 0) \sim K^2 n/\ell$.
\end{cor}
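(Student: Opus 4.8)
The plan is to deduce the corollary from Lemma \ref{lemma:prob_zero} together with independence, so almost all of the work has already been done. First I would observe that the words $V$ and $W$ arise from two \emph{independent} simple random walks on the Cayley graph of $U_n(\Z)$. Consequently the induced lazy walks on $\Z$ that produce the superdiagonal entries $v^\ell_k$ and $w^\ell_k$ are independent, so the events $\{v^\ell_k = 0\}$ and $\{w^\ell_k = 0\}$ are independent and
\[ \Prob(v^\ell_k = w^\ell_k = 0) = \Prob(v^\ell_k = 0)\,\Prob(w^\ell_k = 0). \]

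Next I would note that $w^\ell_k$ has exactly the same distribution as $v^\ell_k$: each is the endpoint of the lazy walk $\sum_{j=1}^\ell x_j$ with $x_j = \pm 1$ with probability $1/2n$ and $x_j = 0$ with probability $(n-1)/n$. Since $\ell \in \omega(n)$, the hypothesis of Lemma \ref{lemma:prob_zero} is satisfied, and applying it to each factor gives $\Prob(v^\ell_k = 0) \sim K\sqrt{n/\ell}$ and $\Prob(w^\ell_k = 0) \sim K\sqrt{n/\ell}$ as $n \to \infty$. Multiplying these two asymptotic equivalences — using the elementary fact that $a_n \sim a'_n$ and $b_n \sim b'_n$ imply $a_n b_n \sim a'_n b'_n$, which is immediate from the limit definition of $\sim$ — yields
\[ \Prob(v^\ell_k = w^\ell_k = 0) \sim \left(K\sqrt{\tfrac{n}{\ell}}\right)^{2} = K^2\,\frac{n}{\ell}. \]

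There is no genuine obstacle in this step: the substantive analytic content is entirely contained in Lemma \ref{lemma:prob_zero} (the local central limit estimate with explicit control of the $n$-dependence), and the corollary is a formality once independence is invoked. The only points one must be careful to state are that the two walks are independent (so the joint probability factors) and that they are identically distributed (so Lemma \ref{lemma:prob_zero} applies verbatim to the second factor).
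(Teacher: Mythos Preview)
Your proposal is correct and matches the paper's approach exactly: the paper simply remarks that $v^\ell_k$ and $w^\ell_k$ are independent and states the corollary, leaving the factorization and application of Lemma~\ref{lemma:prob_zero} implicit. Your write-up just makes explicit the independence, identical distribution, and the multiplicativity of $\sim$, which is precisely what the paper intends.
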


Next we prove an estimate on the probability of having a pair of zeros in fixed coordinates $k_1 \ne k_2$. 

\begin{lemma}\label{lemma:two_zero} Suppose $\ell \in \omega(n)$. Then
for fixed $k_1 \ne k_2$, $$P(v_{k_1}^\ell= v_{k_2}^\ell=0) \sim K^2 \frac{n}{\ell}.$$
\end{lemma}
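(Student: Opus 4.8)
The strategy is the standard Fourier-analytic (local central limit theorem) argument adapted to track the $n$-dependence, exactly as advertised in the text. First I would write down the characteristic function of the two-dimensional step $(x_j, y_j)$: since the step is $(\pm 1, 0)$ or $(0, \pm 1)$ with probability $1/(2n)$ each and $(0,0)$ otherwise, we get
$$
\psi(s,t) = 1 - \frac{1}{n}\bigl(2 - \cos s - \cos t\bigr),
$$
so that, by Fourier inversion on the torus $[-\pi,\pi]^2$,
$$
\Prob(v^\ell_{k_1} = v^\ell_{k_2} = 0) = \frac{1}{4\pi^2}\int_{-\pi}^{\pi}\!\!\int_{-\pi}^{\pi} \left(1 - \frac{1}{n}\bigl(2 - \cos s - \cos t\bigr)\right)^{\!\ell} ds\, dt.
$$

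Next I would perform the substitution that rescales the walk: set $s = u\sqrt{n/\ell}$, $t = w\sqrt{n/\ell}$, which contributes a Jacobian factor $n/\ell$ and changes the domain to a box of side $\sim \pi\sqrt{\ell/n} \to \infty$ (this is where $\ell \in \omega(n)$ is used). On this rescaled region, $2 - \cos s - \cos t = \tfrac{1}{2}(s^2 + t^2) + O(s^4 + t^4)$, so the integrand becomes $\bigl(1 - \tfrac{1}{2\ell}(u^2+w^2) + O(\cdot)\bigr)^{\ell} \to e^{-(u^2+w^2)/2}$ pointwise. The main work is to justify interchanging the limit with the integral: I would split the domain into a central region $|s|,|t| \le \delta$ (for small fixed $\delta$), where a uniform bound like $1 - \frac{1}{n}(2-\cos s - \cos t) \le e^{-(s^2+t^2)/(cn)}$ gives a dominating function $e^{-(u^2+w^2)/c}$, and the peripheral region, where $2 - \cos s - \cos t \ge \kappa > 0$ away from the origin so the integrand is at most $(1 - \kappa/n)^\ell \le e^{-\kappa \ell/n}$, which is $o(n/\ell)$ since $\ell/n \to \infty$. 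Assembling these gives
$$
\Prob(v^\ell_{k_1} = v^\ell_{k_2} = 0) \sim \frac{1}{4\pi^2}\cdot\frac{n}{\ell}\int_{\R^2} e^{-(u^2+w^2)/2}\, du\, dw = \frac{1}{4\pi^2}\cdot\frac{n}{\ell}\cdot 2\pi = \frac{1}{2\pi}\cdot\frac{n}{\ell} = K^2\frac{n}{\ell},
$$
which is the claim.

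The main obstacle, as in all local CLT proofs, is the uniform control of the integrand — specifically producing error estimates in the Taylor expansion of $\log\psi$ that are uniform in $n$, and verifying that the tail contribution is genuinely of smaller order than $n/\ell$ rather than merely bounded. A secondary subtlety worth a remark is that $v^\ell_{k_1}$ and $v^\ell_{k_2}$ are \emph{not} independent (they come from the same walk and compete for the same lazy steps), so unlike Corollary \ref{cor:prob_matching} this cannot be reduced to a product of one-dimensional estimates; nonetheless the answer matches $K^2 n/\ell$ to leading order because the two-dimensional Gaussian factorizes. Everything else — the substitution, the Jacobian, the Gaussian integral — is routine bookkeeping, and since the one-dimensional case (Lemma \ref{lemma:prob_zero}) was deferred to this proof anyway, I would present the two-dimensional computation in full and note that setting one coordinate aside recovers the one-dimensional statement.
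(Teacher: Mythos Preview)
Your proposal is correct and follows essentially the same Fourier-analytic local-CLT approach as the paper: write down the two-dimensional characteristic function $\phi(t_1,t_2)=1-\tfrac{1}{n}(2-\cos t_1-\cos t_2)$, invert on the torus, rescale by $\sqrt{n/\ell}$, and show the non-Gaussian contributions are $o(n/\ell)$. The only presentational difference is that the paper carries out the error analysis more explicitly---expanding $\log\phi$, introducing $F_{\ell,n}(s)=e^{g_n(\ell,s)}-1$, and splitting the central region further at an intermediate scale $|s|\le\lambda^{1/8}$ to bound $\int e^{-|s|^2/2}F_{\ell,n}\,ds$---whereas you package the central region with a single dominated-convergence step; both routes yield the same conclusion.
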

\begin{proof}


We begin by computing  
the characteristic function of $(x_j,y_j)$ which is given by
$$\phi(t_1,t_2)=\Ex(e^{i(t_1x_j+t_2y_j)})=1-\frac{1}{n}(1-\cos{t_1})-\frac{1}{n}(1-\cos{t_2})$$

and the characteristic function of $(v_{k_1}^\ell,v_{k_2}^\ell)=(\sum_{j=1}^\ell x_j, \sum_{j=1}^\ell y_j)$ which is
$$\phi(t)^\ell = \left(1-\frac{1}{n}(1-\cos{t_1})-\frac{1}{n}(1-\cos{t_2})\right)^\ell.$$
Therefore 
$$P(v^\ell_{k_1}=v_{k_2}^\ell=0)=\frac{1}{(2\pi)^2} \iint_{[-\pi,\pi]^2}  \left(1-\frac{1}{n}(1-\cos{t_1})-\frac{1}{n}(1-\cos{t_2})\right)^\ell dt_1dt_2.$$
$$=\frac{1}{(2\pi)^2} \iint_{[-\pi,\pi]^2}  \left(  1  -\frac{|\theta|^2}{2n} + \frac{1}{n} h(\theta)
   \right)^\ell d\theta$$
where $\theta=(t_1,t_2)$ and  $h(\theta)= \sum_{i=2}^\infty (-1)^i\frac{t_1^{2i}+t_2^{2i}}{(2i)!}\in O(|\theta|^4)$.

We use the Taylor expansion $\log(1+x)=\sum_{i=1}^\infty \frac{x^i}{i}$ that is valid for $|x|\leq 1$
to write
\begin{equation}\label{theeqn}\log(\phi(\theta))= \log\left( 1 \underbrace{-\frac{|\theta|^2}{2n} + \frac{1}{n} h(\theta)}_{x}\right)
= -\frac{|\theta|^2}{2n}+ 
{\frac{1}{n} h(\theta)+ f(\theta,{1}/{n})}\end{equation}
where
$$f(\theta,{1}/{n}) = \sum_{j=2}^\infty \frac{1}{j} \left(\frac{1}{n} \sum_{i=1}^\infty (-1)^i\frac{t_1^{2i}+t_2^{2i}}{(2i)!}\right)^j =O(|\theta|^4).$$
This expansion is valid for 
$$\left|-\frac{|\theta|^2}{2n} + \frac{1}{n} h(\theta)\right|=\frac{1}{n}\left|-\frac{|\theta|^2}{2} + h(\theta)\right|\leq 1$$ 
which holds as long as $|\theta|< \delta $ where $\delta$ does not depend on $n$. (It holds for $n=1$ and so it holds for all $n$). Let $\lambda=\ell/n$. 
Now use a change of variable $\theta= s/\sqrt{\lambda}= s \sqrt{\frac{n}{\ell}}$ in Equation \ref{theeqn} and multiply both sides by $\ell$ to get
$$\ell \log\left(\phi\left(s/ \sqrt{\lambda}\right)\right) = -\frac{|s|^2}{2}+ \underbrace{\frac{\ell}{n} h(s/ \sqrt{\lambda})+  \bar{f}(s,1/\ell,
\lambda)}_{g_n(\ell, s)}$$
where $\bar{f}=\ell f$ is given by
$$\bar{f}(s,1/\ell, \lambda)=\sum_{j=2}^\infty \frac{1}{j\ell^{j-1}} \left( \sum_{i=1}^\infty (-1)^i\frac{1}{\lambda^{i-1}}\frac{s_1^{2i}+s_2^{2i}}{(2i)!}\right)^j.$$

This expansion is valid as long as $|s| \leq \delta \sqrt{\lambda}$. Note that when $n=1$ we have
$$\bar{f}(s,1/\ell, \ell)=\sum_{j=2}^\infty \frac{1}{j\ell^{j-1}} \left( \sum_{i=1}^\infty (-1)^i\frac{1}{\ell^{i-1}}\frac{s_1^{2i}+s_2^{2i}}{(2i)!}\right)^j$$
and since $\ell > \lambda$ we have that 

$$\sum_{j=2}^\infty \frac{1}{j\ell^{j-1}} \left( \sum_{i=1}^\infty \frac{1}{\lambda^{i-1}}\frac{s_1^{2i}+s_2^{2i}}{(2i)!}\right)^j \leq \sum_{j=2}^\infty \frac{1}{j\lambda^{j-1}} \left( \sum_{i=1}^\infty \frac{1}{\lambda^{i-1}}\frac{s_1^{2i}+s_2^{2i}}{(2i)!}\right)^j.$$

Then 
$$|g_n(\ell, s)| \leq \lambda |h(s /\sqrt{\lambda})|+|\bar{f}(s, 1/\ell,\lambda)|\leq \lambda|h(s /\sqrt{\lambda})|+\frac{c|s|^4}{\lambda}$$
where $c$ can be chosen independent of $n$.
Note that 
$$\lambda\ h(s/ \sqrt{\lambda})=\lambda \sum_{i=2}^\infty (-1)^i\frac{1}{\lambda^i}\frac{s_1^{2i}+s_2^{2i}}{(2i)!}= \sum_{i=2}^\infty (-1)^i \frac{1}{\lambda^{i-1}}\frac{s_1^{2i}+s_2^{2i}}{(2i)!}$$
so $\lambda\ |h(s /\sqrt{\lambda})|=o(|s|^2)$ and so we can find $0< \epsilon \leq \delta$ such that for $|s|\leq \epsilon \sqrt{\lambda}$
$$ |g_n(s,\ell)| \leq \frac{|s|^2}{4}.$$

Let  $F_{\ell,n}(s) = e^{g_n(\ell,s)}-1$ and let 
$$\bar{p}_\ell(0)=\frac{1}{(2\pi)^2\lambda}\int_{\R^2} e^{-\frac{|s|^2}{2}} ds = \frac{1}{2\pi \lambda}$$
be the integral of a two-variable standard normal distribution (see for example Equation 2.2 in \cite{Lawler}). 

Then
\begin{eqnarray*}
P(v^\ell_{k_1}=v_{k_2}^\ell=0)&=&\frac{1}{(2\pi)^2} \iint_{[-\pi,\pi]^2} \phi(\theta)^\ell d\theta \\
&=&\frac{1}{(2\pi)^2} \iint_{[-\pi,\pi]^2}  \left(  1  -\frac{|\theta|^2}{2n} + \frac{1}{n} h(\theta)
   \right)^\ell d\theta\\
   &=& \frac{1}{(2\pi)^2\lambda} \iint_{[-\pi\sqrt{\lambda},\pi\sqrt{\lambda}]^2} e^{-|s|^2/2} (F_{\ell,n}(s) + 1) ds\\
   &=&    \frac{1}{(2\pi)^2\lambda}\left( A_n(\epsilon,\ell)+\iint_{|s|\leq \epsilon\sqrt{\lambda}} e^{-|s|^2/2} (F_{\ell,n}(s) + 1) ds \right)\\
&=&   \bar{p}_\ell(0) + B_n(\epsilon, \ell) + \frac{1}{(2\pi)^2\lambda}\left( A_n(\epsilon,\ell) + \iint_{|s| \leq \epsilon \sqrt{\lambda}} e^{-\frac{|s|^2}{2}} F_{\ell,n}(s) ds\right)
\end{eqnarray*}

where  
$$|A_n(\epsilon, \ell)|=\left|\iint_{[-\pi\sqrt{\lambda},\pi\sqrt{\lambda}]^2\setminus \{ |s|\leq \epsilon \sqrt{\lambda}\}}\phi(s/\sqrt{\lambda})^\ell ds\right| \leq C\lambda e^{-\beta\lambda}$$
where $C$ and $\beta$ do not depend on $n$
since $|\phi(\theta)| \leq 1- \frac{b}{n} |\theta|^2 \leq  e^{- \frac{b}{n} |\theta|^2}$ (where $b$ does not depend on $n$)  for all $\theta \in [-\pi, \pi]^2$
and so for $|s|\geq \epsilon \sqrt{\lambda}$ we have $\phi(s/\sqrt{\lambda}) \leq e^{-\beta/n}$.

Likewise
$$ |B_n(\epsilon, \ell)|= \left|\frac{1}{(2\pi)^2\lambda}\iint_{|s|> \epsilon \sqrt{\lambda}} e^{-|s|^2/2}ds\right|\leq C'e^{-\beta' \lambda}$$
where $\beta'$ and $C'$ do not depend on $n$.

Finally as long as $|s|\leq \lambda^{\frac{1}{8}}$ we have   $$|F_{\ell,n}(s)|\leq |e^{g_n(\ell, s)} -1| \leq C''g_n(\ell, s) \leq \frac{C'' |s|^4}{\lambda}$$ where $C''$ does not depend on $n$. Therefore  we have  
$$\left|\iint_{|s| \leq \lambda^{1/8}} e^{-\frac{|s|^2}{2}} F_{\ell,n}(s) ds\right| \leq \frac{C''}{\lambda}\int_{\R^2} |s|^4e^{-\frac{|s|^2}{2}} ds\leq  \frac{C'''}{\lambda}.$$

This leaves us only to estimate the integral for  $\lambda^{1/8} \leq |s| \leq \epsilon \sqrt{\lambda}$ where we have the bound $|F_{\ell,n}(s)| \leq e^{-\frac{|s|^2}{4}} +1$. The integral then can be estimated as follows
$$\left| \iint_{ \lambda^{1/8} \leq |s| \leq \epsilon \sqrt{\lambda}} e^{-\frac{|s|^2}{2}} F_{\ell,n}(s) ds\right| \leq 2 \iint_{|s| \geq  \lambda^{1/8}} e^{-\frac{|s|^2}{4}} ds \leq  \bar C e^{-\zeta  \lambda^{1/4}}.$$ 
This gives the desired result.
\end{proof}

\begin{cor}\label{cor:two_matching}
Suppose $\ell \in \omega(n)$. For fixed $k_1 \ne k_2$, $$\Prob(v^\ell_{k_1}=v^\ell_{k_2}=w^\ell_{k_1}=w^\ell_{k_2}=0) \sim K^4 \left( \frac{n}{\ell} \right)^2$$.
\end{cor}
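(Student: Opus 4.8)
The plan is to reduce everything to independence together with Lemma \ref{lemma:two_zero}. By construction $G_{\ell,n} = \langle V, W \rangle$ is generated by \emph{two independent} simple random walks of length $\ell$ on the Cayley graph of $U_n(\Z)$, so the word $V = V_1 \cdots V_\ell$ and the word $W = W_1 \cdots W_\ell$ are independent. First I would observe that the pair $(v^\ell_{k_1}, v^\ell_{k_2})$ is a deterministic function of the $V$-walk alone, while $(w^\ell_{k_1}, w^\ell_{k_2})$ is a deterministic function of the $W$-walk alone; hence the events $\{v^\ell_{k_1} = v^\ell_{k_2} = 0\}$ and $\{w^\ell_{k_1} = w^\ell_{k_2} = 0\}$ are independent, and
\[
\Prob(v^\ell_{k_1}=v^\ell_{k_2}=w^\ell_{k_1}=w^\ell_{k_2}=0) = \Prob(v^\ell_{k_1}=v^\ell_{k_2}=0)\cdot \Prob(w^\ell_{k_1}=w^\ell_{k_2}=0).
\]

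Next I would apply Lemma \ref{lemma:two_zero} to each factor. Since $\ell \in \omega(n)$ and $k_1 \ne k_2$ are fixed, the lemma gives $\Prob(v^\ell_{k_1}=v^\ell_{k_2}=0) \sim K^2 n/\ell$; because the $W$-walk has exactly the same law as the $V$-walk, the lemma applies verbatim to give $\Prob(w^\ell_{k_1}=w^\ell_{k_2}=0) \sim K^2 n/\ell$ as well. Finally I would use that asymptotic equivalence passes through products: if $a_\ell \sim \alpha_\ell$ and $b_\ell \sim \beta_\ell$ then $(a_\ell b_\ell)/(\alpha_\ell \beta_\ell) \to 1$, so multiplying the two estimates yields
\[
\Prob(v^\ell_{k_1}=v^\ell_{k_2}=w^\ell_{k_1}=w^\ell_{k_2}=0) \sim \left(K^2 \tfrac{n}{\ell}\right)^2 = K^4 \left(\tfrac{n}{\ell}\right)^2,
\]
which is the claim.

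There is essentially no obstacle here beyond bookkeeping: the only points that need a sentence are (i) that $(v^\ell_{k_1}, v^\ell_{k_2})$ and $(w^\ell_{k_1}, w^\ell_{k_2})$ are functions of the respective independent walks, which is immediate from the definition of $G_{\ell,n}$, and (ii) that Lemma \ref{lemma:two_zero} is stated with genuine asymptotic equivalence $\sim$ (not merely an order bound), so that the product is again an equivalence rather than just an upper bound. Both are satisfied, so the corollary follows directly.
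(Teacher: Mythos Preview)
Your proof is correct and matches the paper's own argument exactly: factor the probability using the independence of the $V$- and $W$-walks, then apply Lemma~\ref{lemma:two_zero} to each factor. The paper's proof is the one-line version of what you wrote.
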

\begin{proof}
This follows from Lemma \ref{lemma:two_zero} and the fact that 
$$\Prob(v^\ell_{k_1} = v^\ell_{k_2} = w^\ell_{k_1} = w^\ell_{k_2} = 0)=\Prob(v^\ell_{k_1}=v^\ell_{k_2}=0)\Prob(w^\ell_{k_1}=w^\ell_{k_2}=0).$$
\end{proof}

\begin{lemma}\label{lemma:hyperplane} Suppose $\ell \in \omega(n)$ and suppose $a_i=a_i(\ell)$  for $1 \le i \le n-1$, with $\Prob(a_{1} \ne 0) \to 1$ as $\ell \to \infty$. Then  $\Prob(a_1 v_1 + a_2 v_2 + \dotsb + a_{n-1} v_{n-1} = 0) \to 0$ as $\ell \to \infty$.
\end{lemma}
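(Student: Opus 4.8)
The plan is to \emph{decouple} the single coordinate $v_1$ from the rest of the walk for $V$ by freezing everything about that walk except the signs it attaches to the generator $A_1^{\pm1}$; once this is done, $v_1$ becomes an independent simple random walk of a (random but concentrated) length, and a standard Stirling anti-concentration bound finishes the argument. Concretely, write $V = V_1 \cdots V_\ell$ with $V_j \in S$, let $N_1 = \#\{\, j : V_j \in \{A_1, A_1^{-1}\}\,\}$, and let $\mathcal G$ be the $\sigma$-algebra generated by $W$ (equivalently, by the coefficients $a_i$, which in our applications are functions of the $w_j$ and hence independent of $V$), together with the list of which steps $j$ use $A_1^{\pm1}$ and the full data $(V_j)$ of all the other steps. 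Then $N_1$, the $a_i$, and $v_2, \dots, v_{n-1}$ are all $\mathcal G$-measurable — the last because for $k \ge 2$ the entry $v_k$ only counts occurrences of $A_k^{\pm1}$, which are disjoint from the $A_1^{\pm1}$-steps — while conditionally on $\mathcal G$ the entry $v_1 = \sum_{j:\,V_j \in \{A_1,A_1^{-1}\}} \varepsilon_j$ is a sum of $N_1$ i.i.d.\ signs $\varepsilon_j = \pm1$, independent of $\mathcal G$.

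First I would record the anti-concentration estimate: for $t \ge 1$ and any $r \in \Z$,
\[ \Prob\Bigl( \sum_{j=1}^{t} \varepsilon_j = r \Bigr) \ \le\ \binom{t}{\floor{t/2}} 2^{-t} \ \le\ \frac{C}{\sqrt t} \]
for an absolute constant $C$, by Stirling's formula (the left side is trivially $\le 1$ when $t = 0$). Setting $M = a_2 v_2 + \dotsb + a_{n-1} v_{n-1}$, on the event $\{a_1 \ne 0\}$ the equation $a_1 v_1 + \dotsb + a_{n-1} v_{n-1} = 0$ is equivalent to $v_1 = -M/a_1$, a single prescribed value, so conditioning on $\mathcal G$ gives
\[ \Prob\bigl( a_1 v_1 + \dotsb + a_{n-1} v_{n-1} = 0 \,\big|\, \mathcal G \bigr) \ \le\ \mathbf 1_{\{a_1 = 0\}} + \min\{\, 1,\ C/\sqrt{N_1}\,\}. \]
Taking expectations, $\Prob\bigl(\sum_i a_i v_i = 0\bigr) \le \Prob(a_1 = 0) + \Ex\bigl[\min\{1, C/\sqrt{N_1}\}\bigr]$.

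The first term tends to $0$ by hypothesis. For the second, $N_1$ is Binomial with $\ell$ trials and success probability $1/n$, so $\Ex N_1 = \ell/n \to \infty$ precisely because $\ell \in \omega(n)$; a Chernoff bound gives $\Prob\bigl(N_1 \le \ell/(2n)\bigr) \to 0$, while on the complementary event $C/\sqrt{N_1} \le C\sqrt{2n/\ell} \to 0$. Splitting the expectation over these two events then shows $\Ex[\min\{1, C/\sqrt{N_1}\}] \to 0$, which completes the proof. The step I expect to require the most care is the conditioning bookkeeping — checking that freezing the non-$A_1$ part of $V$'s walk genuinely leaves $v_2, \dots, v_{n-1}$ (and the $a_i$) $\mathcal G$-measurable while turning $v_1$ into an honestly independent length-$N_1$ simple random walk — together with the (routine but essential) point that one must average over the random, concentrated value of $N_1$ rather than treat the walk length as deterministic. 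If one wished to drop the hypothesis that the $a_i$ are independent of $V$, the same decoupling goes through verbatim provided $a_1$ is $\mathcal G$-measurable, i.e.\ provided $a_1$ does not depend on the signs $V$ attaches to its $A_1^{\pm1}$-steps, which holds in all our applications since there $a_1$ is a function of $W$ alone.
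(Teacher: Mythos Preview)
Your proof is correct and follows the same core idea as the paper: isolate $v_1$, reduce the event on $\{a_1\ne 0\}$ to $v_1$ hitting a prescribed target, and use anti-concentration. The paper's version is much terser: it writes
\[
\Prob\!\left(v_1=-\sum_{i\ge 2}\tfrac{a_i}{a_1}v_i \,\middle|\, a_1\ne 0\right)\le \Prob(v_1=0)
\]
with the justification ``since the most likely value for $v_1$ is $0$'', and then cites Lemma~\ref{lemma:prob_zero} to get $\Prob(v_1=0)\sim K\sqrt{n/\ell}\to 0$. Your explicit $\sigma$-algebra $\mathcal G$ is exactly what makes that inequality rigorous when the target $-M/a_1$ depends on $v_2,\dots,v_{n-1}$ (which are \emph{not} unconditionally independent of $v_1$), so your bookkeeping is not wasted effort. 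The only genuine difference is the endgame: instead of invoking the local-CLT Lemma~\ref{lemma:prob_zero}, you bound the conditional hitting probability by $C/\sqrt{N_1}$ via Stirling and then handle the random $N_1\sim\mathrm{Bin}(\ell,1/(n-1))$ with a Chernoff bound. This is more self-contained (no Fourier analysis needed) at the cost of one extra concentration step. One tiny slip: the success probability for $N_1$ is $1/(n-1)$, not $1/n$, but this is immaterial since $\ell/(n-1)\to\infty$ just the same.
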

\begin{proof}
\begin{align*}
\Prob(\sum_{i=1}^na_iv_i = 0) &= \Prob(v_1 = - {\sum_{i=2}^n\frac{a_i}{a_1}v_i = 0}\mid a_1\neq 0)\Prob(a_1\neq0)\\ &\quad\quad\quad\quad\quad\quad\quad\quad + \Prob(\sum_{i=1}^na_iv_i = 0\mid a_1= 0) \Prob(a_1=0) \\
&\leq \Prob(v_1 = - {\sum_{i=2}^n\frac{a_i}{a_1}v_i = 0}\mid a_1\neq 0) +  \Prob(a_1=0) \\
&\le \Prob(v_1 = 0) + \Prob(a_1=0) \\
\end{align*}
{since the most likely value for $v_1$ is $0$ and therefore by Lemma \ref{lemma:prob_zero} this limit goes to zero.}
\end{proof}

\section{Asymptotic Abelianess}\label{sec:comm}
  In this section we prove Theorem \ref{theorem:abelian}. To check that $G_{\ell, n}$ is abelian we only need to check that $V,W$ commute.
Most of our analysis involves the notion of \emph{supercommuting}
that we defined in the introduction. Recall that for two words $V = V_1 V_2 \dotsm V_{\ell}$ and 
$W = W_1 W_2 \dotsm W_{\ell}$ with $V_i,W_i\in S$ to supercommute, every $V_i$ must commute with every $W_j$.

Clearly supercommuting is a sufficient (but not necessary) condition for commuting. However, when $\ell \in o(n)$, the probability of $V$ and $W$ commuting but not supercommuting goes to zero as $n \to \infty$. Therefore, when $\ell$ is in this class, these two notions of commuting are asymptotically equivalent. 

To prove this fact, we begin by defining the function
\[\sigma_i(Z) := \begin{cases}
1 & \text{if $Z = A_i$} \\
-1 & \text{if $Z = A_i^{-1}$} \\
0 & \text{otherwise.}
\end{cases}\]
Since multiplication in $U_n$ is additive on the superdiagonal elements,
\[v_i = \sum_{j = 1}^\ell \sigma_i(V_j) \quad w_i = \sum_{j = 1}^\ell \sigma_i(W_j).\]

In other words, the $i^{th}$ superdiagonal entry of $V$ is a count of the number of times one of $A_i^{\pm1}$ appears in the word $V=V_1\dots V_n$, where $A_i$ contributes $+1$, and its inverse $-1$. Since $\ell$ is growing more slowly than the size of our matrix (and hence more slowly than the size of our generating set $S$), 
the probability of seeing a particular $A_i$ in an $\ell$-step walk approaches zero. We make this precise in the following lemma. 
\begin{lemma}\label{lemma:sparse}
Suppose $\ell \in o(n)$. For fixed $1 \le i \le n-1$ and $Z= Z_1 Z_2 \dotsm Z_\ell$, where $Z_i \in S=\{A_1^{\pm1},\dots A_{n-1}^{\pm1}\}$, 
$$\Prob(\text{$\sigma_i(Z_j) \ne 0$ for some $1 \le j \le \ell$}) \to 0$$
 as $n \to \infty$.
\end{lemma}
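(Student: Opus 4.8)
The plan is to compute the probability directly using independence of the steps $Z_1, \dots, Z_\ell$. Each $Z_j$ is chosen uniformly from the $2(n-1)$ elements of $S$, so for a fixed index $i$ we have $\Prob(\sigma_i(Z_j) \ne 0) = \Prob(Z_j \in \{A_i, A_i^{-1}\}) = 2/(2(n-1)) = 1/(n-1)$. Since the steps are independent, the probability that $\sigma_i(Z_j) = 0$ for \emph{every} $j$ is $\left(1 - \tfrac{1}{n-1}\right)^\ell$, and hence by a union-style complement
\[
\Prob(\text{$\sigma_i(Z_j) \ne 0$ for some $1 \le j \le \ell$}) = 1 - \left(1 - \frac{1}{n-1}\right)^\ell.
\]

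The remaining task is purely analytic: show this tends to $0$ as $n \to \infty$ given $\ell \in o(n)$. I would bound it above by $\tfrac{\ell}{n-1}$ using Bernoulli's inequality (or the union bound over the $\ell$ steps, which gives the same estimate even more cheaply: the probability of \emph{some} occurrence is at most $\sum_{j=1}^\ell \Prob(\sigma_i(Z_j)\ne 0) = \ell/(n-1)$). Since $\ell \in o(n)$ means $\ell/(n-1) \to 0$, the squeeze theorem finishes it. In fact the union bound alone makes the whole argument a two-line computation and avoids even writing down the exact expression.

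There is essentially no obstacle here; the lemma is a straightforward first-moment / union-bound estimate, and the only thing to be slightly careful about is matching the hypothesis $\ell \in o(n)$ to the bound — but $o(n)$ and $o(n-1)$ are the same class, so $\ell/(n-1) \to 0$ is immediate. I would present the union bound version for brevity. If one wanted the cleaner exact identity it is also worth noting in passing that this already shows the stronger statement that the probability is $\Theta(\ell/n)$ when $\ell \in o(n)$, which is the quantitative content used implicitly when comparing commuting and supercommuting, though for the stated lemma only the limit is needed.
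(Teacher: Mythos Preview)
Your proof is correct and follows essentially the same approach as the paper: compute $\Prob(\sigma_i(Z_j)=0\text{ for all }j)=(1-1/(n-1))^\ell$ by independence and observe that this tends to $1$ when $\ell\in o(n)$. Your explicit use of the union bound (or Bernoulli) to justify the final limit is slightly more detailed than the paper's version, but the argument is the same.
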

\begin{proof}
For fixed $j$, \[\Prob(\sigma_i(Z_j) = 0) =\left( 1 - \frac{2}{2(n-1)}\right) = \left(1-\frac{1}{n-1}\right).\] Since the $Z_j$'s are independent, \[\Prob(\text{$\sigma_i(Z_j) = 0$ for all $j$}) = \left(1- \frac{1}{n-1}\right)^\ell.\] Since $\ell \in o(n)$, the limit of this probability is $1$, and so its negation---the probability that $\sigma_i(Z_j) \ne 0$ for some $j$---goes to $0$.
\end{proof}

Now suppose that $A_i$ appears at least once in our word $Z_1 Z_2 \dotsm Z_\ell$. Lemma \ref{lemma:sparse} implies that it, or its inverse, almost surely does not appear again.

\begin{cor}\label{cor:just-once}
Suppose $\ell \in o(n)$ and $Z = Z_1 Z_2 \dotsm Z_\ell$. For a fixed $1 \le i \le n-1$,  the $i^{\textrm{th}}$ superdiagonal entry $z_i$  of $Z$ satisfies  \[\Prob(z_i = \pm 1 \mid \text{$\sigma_i(Z_j) \ne 0$ for some $j$}) \to 1\] as $n \to \infty$.
\end{cor}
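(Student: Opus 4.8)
The plan is to reduce the whole statement to a count of how many of the letters $Z_1,\dots,Z_\ell$ lie in $\{A_i^{\pm 1}\}$. Using $z_i = \sum_{j=1}^\ell \sigma_i(Z_j)$ from the discussion preceding the corollary, the first observation is that if \emph{exactly one} index $j$ has $\sigma_i(Z_j)\ne 0$ then $z_i = \sigma_i(Z_j) = \pm 1$. So, writing $N_i = \#\{\,j : \sigma_i(Z_j)\ne 0\,\}$ and letting $E_i = \{N_i \ge 1\}$ be the event we condition on, I would start from the bound
\[ \Prob(z_i = \pm 1 \mid E_i) \ \ge\ \Prob(N_i = 1 \mid E_i) \ =\ 1 - \frac{\Prob(N_i \ge 2)}{\Prob(N_i \ge 1)}, \]
and reduce the corollary to showing that the last fraction tends to $0$.

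Next I would use that $N_i$ is $\mathrm{Binomial}(\ell,p)$ with $p = \tfrac{1}{n-1}$, since each $Z_j$ independently equals $A_i$ or $A_i^{-1}$ with total probability $p$. For the numerator, a union bound over unordered pairs $\{j,j'\}$ gives $\Prob(N_i \ge 2) \le \binom{\ell}{2}p^2 \le \tfrac12 \ell^2 p^2$. For the denominator I would use a Bonferroni (first-order inclusion–exclusion) estimate: $\Prob(N_i \ge 1) = 1-(1-p)^\ell \ge \ell p - \binom{\ell}{2}p^2 \ge \ell p\bigl(1 - \tfrac12 \ell p\bigr)$, and since $\ell p = \ell/(n-1) \to 0$ because $\ell \in o(n)$, for all large $n$ this is at least $\tfrac12 \ell p$. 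Combining,
\[ \frac{\Prob(N_i \ge 2)}{\Prob(N_i \ge 1)} \ \le\ \frac{\tfrac12 \ell^2 p^2}{\tfrac12 \ell p} \ =\ \ell p \ =\ \frac{\ell}{n-1} \ \longrightarrow\ 0, \]
which completes the argument.

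The one point I would be careful about — the main obstacle, such as it is — is the denominator estimate: because we condition on the vanishingly rare event $E_i$, it is not enough to invoke Lemma \ref{lemma:sparse}, which only tells us $\Prob(E_i)\to 0$; we genuinely need a lower bound of the right order, namely $\Prob(E_i) \gtrsim \ell p$. The hypothesis $\ell \in o(n)$ enters precisely to force $\ell p \to 0$, which is what simultaneously makes "$\Prob(N_i\ge 1)$ behaves like its leading term $\ell p$" valid and makes the final ratio $O(\ell p)$ vanish. Everything else is an elementary binomial computation.
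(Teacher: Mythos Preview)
Your proof is correct and, like the paper's, reduces the statement to showing that $\Prob(N_i=1\mid N_i\ge 1)\to 1$. The execution differs slightly: the paper observes that, conditional on some index $j$ having $\sigma_i(Z_j)\ne 0$, the remaining $\ell-1$ letters are still independent with $\Prob(\sigma_i(Z_k)=0)=1-\tfrac{1}{n-1}$, so the probability they all vanish is $(1-\tfrac{1}{n-1})^{\ell-1}\to 1$; this bounds $\Prob(N_i=1\mid N_i\ge 1)$ from below in one stroke without separately estimating numerator and denominator. Your route via the ratio $\Prob(N_i\ge 2)/\Prob(N_i\ge 1)$ with a union bound upstairs and a Bonferroni lower bound downstairs is a bit longer but has the virtue of making explicit exactly the point you flagged---that one needs a lower bound on $\Prob(E_i)$ of order $\ell p$, which the paper's argument handles only implicitly. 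Either way the key input is $\ell p=\ell/(n-1)\to 0$.
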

\begin{proof}
This follows from the fact that  $\Prob(z_i = \pm 1 \mid \sigma_i(Z_j) \ne 0\textrm{ for some }j \textrm{, and } 
 \sigma_i(Z_k)=0 \textrm{ for all } k \neq j))=1$
and $$\Prob( \sigma_i(Z_k)=0 \textrm{ for all } k \neq j)=  \left(1 - \frac{1}{n-1}\right)^{\ell-1} \to 1$$ as $\ell \to \infty.$
\end{proof}

\begin{lemma}\label{lemma:supercommute}
When $\ell \in  o(n)$, $$\Prob(\text{$V$ and $W$ commute but do not supercommute}) \to 0.$$
\end{lemma}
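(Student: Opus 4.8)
The plan is to reformulate failure of supercommuting. By Corollary~\ref{cor:elementary_commute} the only non-commuting pairs of generators are $A_a^{\pm1}$ and $A_{a+1}^{\pm1}$, so $V,W$ fail to supercommute exactly when there is a \emph{conflict}: an index $a$ with $A_a^{\pm1}$ occurring in one of $V,W$ and $A_{a+1}^{\pm1}$ occurring in the other. Writing $\mathcal S_Z=\{a:A_a^{\pm1}\text{ occurs in }Z\}$ for the generator-support of a word $Z$, and using the symmetry between $V$ and $W$, it suffices to show
\[
\Prob\bigl(\text{$V,W$ commute and $a\in\mathcal S_V,\ a+1\in\mathcal S_W$ for some $a$}\bigr)\to 0 .
\]
Note $\Prob(a\in\mathcal S_V)=1-(1-\tfrac1{n-1})^\ell\le\tfrac{\ell}{n-1}$, that $\mathcal S_V$ and $\mathcal S_W$ are independent, and that by Corollary~\ref{cor:just-once} we have $|v_a|=1$ conditionally on $a\in\mathcal S_V$ with probability $\to1$ (uniformly in $a$, since this probability does not depend on $a$).

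The structural input is that \emph{a conflict together with commuting forces the conflict to resolve}. Suppose $V,W$ commute, $a\in\mathcal S_V$, $a+1\in\mathcal S_W$, and moreover $|v_a|=1$ and $|w_{a+1}|=1$. Lemma~\ref{lemma:commute} at index $a$ gives $w_a v_{a+1}=w_{a+1}v_a=\pm1\neq0$, hence $w_a\neq0$ and $v_{a+1}\neq0$; in particular $a\in\mathcal S_W$ and $a+1\in\mathcal S_V$. So a commuting pair with a conflict at $a$ must either satisfy $\{a,a+1\}\subseteq\mathcal S_V\cap\mathcal S_W$, or be ``unclean'' at $a$ or $a+1$, meaning $|v_a|\neq1$ or $|w_{a+1}|\neq1$.

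I would then split into two regimes. If $\ell\le n^{3/5}$, bound the displayed probability by a union bound over $a$ of the two alternatives above. The ``resolved'' alternative contributes $\Prob(\{a,a+1\}\subseteq\mathcal S_V)\,\Prob(\{a,a+1\}\subseteq\mathcal S_W)\le(\tfrac{\ell}{n-1})^4$; the ``unclean'' alternative contributes at most $\Prob(a\in\mathcal S_V,\,|v_a|\neq1)\Prob(a+1\in\mathcal S_W)+\Prob(a\in\mathcal S_V)\Prob(a+1\in\mathcal S_W,\,|w_{a+1}|\neq1)$, and $\Prob(a\in\mathcal S_V,\,|v_a|\neq1)\lesssim(\tfrac\ell n)^2$ since this forces $A_a^{\pm1}$ to occur at least twice. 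Summing over the $\le n$ values of $a$ gives a bound $O(\ell^4/n^3+\ell^3/n^2)=O(\ell^3/n^2)=O(n^{-1/5})\to0$. If instead $\ell>n^{3/5}$, I discard ``not supercommute'' and show directly that $\Prob(V,W\text{ commute})\to0$: a.a.s.\ over $V$, the support $\mathcal S_V$ has $\gtrsim\ell$ maximal runs (it has $\sim\ell$ elements and, since $\ell=o(n)$, only $O(\ell^2/n)=o(\ell)$ adjacent pairs), and all but $o(\ell)$ of its right-endpoints $a$ satisfy $|v_a|=1$; for such an $a$ the structural input shows commuting forces $|w_{a+1}|\neq1$, as otherwise $a+1\in\mathcal S_V$, contradicting that $a$ is a run-endpoint. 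Thus commuting forces $|w_j|\neq1$ for $\gtrsim\ell$ distinct, $V$-measurable indices $j$, and a second-moment estimate for $\sum_j\mathbf 1[|w_j|=1]$, whose mean is $\gtrsim\ell^2/n\to\infty$, bounds this probability by $O(n/\ell^2)\to0$. Since every pair $(\ell,n)$ lies in one of the two regimes and the bounds depend on $n$ only, the usual subsequence argument handles an oscillating $\ell$ and the lemma follows.

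The step I expect to be the main obstacle is the regime $\ell>n^{3/5}$: (i) the concentration statements about $\mathcal S_V$ --- that a.a.s.\ it has $\gtrsim\ell$ elements, $o(\ell)$ adjacent pairs, and $o(\ell)$ unclean right-endpoints --- which are routine but must be assembled; and (ii) the second-moment bound $\Prob(|w_j|\neq1\text{ for all }j\in J)=O(n/\ell^2)$ when $|J|\gtrsim\ell$, which requires controlling the mild negative dependence among the coordinates of the $W$-walk. Conditioning on which letters of $W$ fall among $\{A_j^{\pm1}:j\in J\}$ makes the relevant indicators negatively associated and reduces the bound to a Chebyshev estimate; working throughout with the generator-supports $\mathcal S_V,\mathcal S_W$ (rather than the supports of the superdiagonal vectors) is what keeps cancellation from entering this estimate.
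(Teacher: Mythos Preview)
Your proposal is correct but takes a considerably more elaborate route than the paper. The paper's proof is a single short conditional argument with no regime split: it bounds $\Prob(\text{commute but not supercommute})$ by $\Prob(\text{commute}\mid\text{not supercommute})$, picks a conflict index $k$ (so $k-1\in\mathcal S_W$ and $k\in\mathcal S_V$, say), and decomposes according to whether $|w_{k-1}|=1$ and $|v_k|=1$. Corollary~\ref{cor:just-once} dispatches the ``unclean'' cases, and in the clean case Lemma~\ref{lemma:commute} forces $v_{k-1}\neq 0$, which has probability $\to 0$ by Lemma~\ref{lemma:sparse}. That is exactly your ``structural input,'' but applied once to a single (random) conflict rather than summed or iterated.

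What each approach buys: the paper's argument is much shorter and avoids any second-moment machinery, though it is somewhat informal about the random index $k$ and the attendant conditioning. Your approach is more explicit about dependencies, and your second regime in fact proves the stronger statement $\Prob(\text{commute})\to 0$ for $n^{3/5}<\ell=o(n)$, which the paper establishes separately as part of Theorem~\ref{theorem:abelian}(3). One simplification you may want: at a clean right-endpoint $a$ you actually get $w_{a+1}=0$ (since $v_{a+1}=0$ and $|v_a|=1$ force $w_{a+1}v_a=w_av_{a+1}=0$), so you can run the second-moment step on $Z_j=\mathbf 1[\text{bin }j\text{ has exactly one ball in }W]\le\mathbf 1[w_j\neq 0]$; for these the pairwise covariances are explicitly computable and nonpositive in the regime $\ell=o(n)$, which sidesteps the negative-association subtlety you flag (the indicators $\mathbf 1[|w_j|=1]$ are not monotone in the occupancy counts, so NA does not apply to them directly).
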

\begin{proof}
Note that
\begin{align*}
&\Prob(\text{$V$ and $W$ commute but do not supercommute}) \\
\le &\Prob(\text{$V$ and $W$ commute} \mid \text{$V$ and $W$ do not supercommute}).
\end{align*}
We will call this latter (conditional) event $\mc{C}$ and show that $\Prob(\mc{C}) \to 0$.

Let $A_i$ and $A_{i+1}$ be called \emph{neighboring} elementary matrices. If $V$ and $W$ do not supercommute, then Corollary \ref{cor:elementary_commute} implies the words $V$ and $W$ must contain neighboring matrices. Without loss of generality, this implies there must be some $1 < k \le n-1$ and some $1 \le i,j \le \ell$ such that $\sigma_{k-1}(W_i) \ne 0$ and $\sigma_{k}(V_j) \ne 0$. We bound $\Prob(\mathcal{C})$ above by considering the events $w_{k-1} \ne \pm 1$, $v_k \ne \pm1$ and the joint event $w_{k-1} = \pm 1, v_{k} = \pm 1$. While these three events are not mutually exclusive, they do cover all possibilities.
\begin{align*}
\Prob(\mathcal{C}) &\le \Prob(\mathcal{C} \mid w_{k-1} \ne \pm 1) \Prob(w_{k-1} \ne \pm 1) 
+ \Prob(\mathcal{C} \mid v_{k} \ne \pm 1) \Prob(v_{k} \ne \pm 1) \\
&+ \Prob(\mathcal{C} \mid w_{k-1}, v_{k} = \pm 1) \Prob(w_{k-1}, v_{k} = \pm 1).
\end{align*}
By Corollary \ref{cor:just-once} the first two terms go to $0$ and the last term goes to just $\Prob(\mathcal{C} \mid w_{k-1}, v_{k} = \pm 1)$. By Lemma \ref{lemma:commute}, this is at most
\begin{align*}
\Prob(\mathcal{C} \mid w_{k-1}, v_{k} = \pm 1) &\le \Prob\left(
w_{k-1}v_{k} -w_{k}v_{k-1}=0
\mid w_{k-1}, v_{k} = \pm 1\right) \\
&\le \Prob(w_{k} v_{k-1} \ne 0) \\
&\le \Prob(v_{k-1} \ne 0)
\end{align*}
and $\Prob(v_{k-1} \ne 0) \to 0$ by Lemma \ref{lemma:sparse}.
\end{proof}

\subsection{Part 1 of Theorem \ref{theorem:abelian}: when $\ell(n) \in o(\sqrt{n})$.}\label{osqrtn:subsec}
In this case,  we can use a counting argument to show that $V$ and $W$ supercommute. 

\begin{lemma} Assume that $\ell \in o(\sqrt{n})$,  $V = V_1 V_2 \dotsm V_{\ell},$ and $W = W_1 W_2 \dotsm W_{\ell}.$
Let $F$ be the number of pairs $i, j$ for which $V_i$ and $W_j$ fail to commute. Then the expected value $\Ex(F) \to 0$ as $n \to \infty$.
\end{lemma}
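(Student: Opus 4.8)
The plan is to compute $\Ex(F)$ directly by linearity of expectation. Write $F = \sum_{i=1}^{\ell} \sum_{j=1}^{\ell} \mathbf{1}_{ij}$ where $\mathbf{1}_{ij}$ is the indicator of the event that $V_i$ and $W_j$ fail to commute. By Corollary \ref{cor:elementary_commute}, two elementary superdiagonal matrices $A_a^{\pm 1}$ and $A_b^{\pm 1}$ fail to commute precisely when $|a - b| = 1$. Since $V_i$ and $W_j$ are independent and each uniform on the $2(n-1)$-element set $S$, I would first fix $V_i = A_a^{\pm 1}$ (any of the $2(n-1)$ choices) and count the $W_j$ that fail to commute with it: these are $A_{a-1}^{\pm 1}$ and $A_{a+1}^{\pm 1}$, so at most $4$ of the $2(n-1)$ possibilities for $W_j$. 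Hence $\Prob(\mathbf{1}_{ij} = 1) \le \frac{4}{2(n-1)} = \frac{2}{n-1}$ for every pair $(i,j)$.

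Summing over the $\ell^2$ pairs gives
\[
\Ex(F) = \sum_{i=1}^{\ell}\sum_{j=1}^{\ell} \Prob(\mathbf{1}_{ij} = 1) \le \ell^2 \cdot \frac{2}{n-1} = \frac{2\ell^2}{n-1}.
\]
Now invoke the hypothesis $\ell \in o(\sqrt{n})$: this means $\ell^2 \in o(n)$, so $\ell^2/(n-1) \to 0$, and therefore $\Ex(F) \to 0$ as $n \to \infty$.

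There is essentially no obstacle here; the only thing to be a little careful about is the edge effect in the counting step — when $a = 1$ or $a = n-1$ one of the neighbors $A_{a-1}$ or $A_{a+1}$ does not exist, so the count of bad $W_j$ is at most $4$ and exactly $4$ only for interior $a$. Since I only need an upper bound this does not matter, and the bound $\Prob(\mathbf{1}_{ij}=1) \le \tfrac{2}{n-1}$ holds uniformly. (One could even get the exact value $\Ex(F) = \ell^2 \cdot \frac{4(n-2)}{(2(n-1))^2} = \frac{\ell^2(n-2)}{(n-1)^2}$ by counting interior versus boundary cases, but the crude bound suffices for the asymptotic claim.) The main point to emphasize is simply that squaring $\ell$ keeps us in $o(n)$, which is exactly why the $\sqrt{n}$ threshold appears.
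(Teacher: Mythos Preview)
Your proof is correct and follows essentially the same approach as the paper: write $F$ as a sum of indicators, use Corollary~\ref{cor:elementary_commute} to bound each $\Prob(\mathbf{1}_{ij}=1)$ by $\tfrac{2}{n-1}$ (noting the boundary cases $a=1,\,n-1$ give a smaller probability), apply linearity of expectation to get $\Ex(F)\le \tfrac{2\ell^2}{n-1}$, and conclude from $\ell\in o(\sqrt n)$.
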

\begin{proof}
 Let $\gamma_{i,j}$ be an indicator random variable whose value is $1$ precisely when $V_i W_j \ne W_j V_i$.  By Corollary \ref{cor:elementary_commute}, for each $k$, there are at most $2$ values of $i$ such that $V_i$ does not commute with $A_k^{\pm 1}$. Since $W_j = A_k^{\pm 1}$ for some $1 \le k \le n-1$,  when $2 \leq k \leq n-2$, the probability that $V_i$ does not commute with $W_j$ is  $\frac{4}{2(n-1)} = \frac{2}{n-1}$; when $k$ is equal to $1$ or $n-1$, the probability is $\frac{2}{2(n-1)}=\frac{1}{n-1}$. Therefore the probability $P(V_i W_j \ne W_j V_i) \leq \frac{2}{n-1}$ for all $i$ and $j$.
Since  $F$ counts the number of non-commuting pairs $V_i, W_j$, we have
\begin{align*}
F &= \sum_{i = 1}^\ell \sum_{j = 1}^\ell \gamma_{i,j}. \\
\intertext{By linearity of expected value,}
\Ex(F) &= \sum_{i = 1}^\ell \sum_{j = 1}^\ell \Ex(\gamma_{i,j}). \\
&= \sum_{i = 1}^\ell \sum_{j = 1}^\ell \Prob(V_i W_j \ne W_j V_i) \\
&\leq  \sum_{i = 1}^\ell \sum_{j = 1}^\ell \frac{2}{n-1} \\
&\leq \ell^2 \left( \frac{2}{n-1} \right).
\end{align*}
Since $\ell \in o(\sqrt{n})$ then $\ell^2 \in  o(n)$ and
\[\lim_{n \to \infty} \Ex(F) \leq \lim_{n \to \infty} \frac{2 \ell^2}{n-1} = 0. \qedhere\]
\end{proof}

\begin{cor}\label{cor:ab-o-sqrt}
If $\ell \in o(\sqrt{n})$ then $V$ and $W$ supercommute asymptotically almost surely.
\end{cor}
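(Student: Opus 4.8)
The statement to prove is Corollary \ref{cor:ab-o-sqrt}: if $\ell \in o(\sqrt{n})$, then $V$ and $W$ supercommute asymptotically almost surely. The plan is to deduce this directly from the preceding lemma, which shows that the expected number $F$ of non-commuting pairs $(V_i, W_j)$ tends to $0$ as $n \to \infty$. The key observation is that supercommuting is precisely the event $\{F = 0\}$: by definition, $V$ and $W$ supercommute if and only if every $V_i$ commutes with every $W_j$, which is the same as saying no pair fails to commute.

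First I would invoke Markov's inequality (equivalently, the first-moment method): since $F$ is a non-negative integer-valued random variable,
\[ \Prob(F \ge 1) \le \Ex(F). \]
Then by the previous lemma, $\Ex(F) \to 0$ as $n \to \infty$, so $\Prob(F \ge 1) \to 0$, and therefore $\Prob(F = 0) \to 1$. Since $\{F = 0\}$ is exactly the event that $V$ and $W$ supercommute, this says $V$ and $W$ supercommute asymptotically almost surely, which is the claim.

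There is essentially no obstacle here — the content is entirely in the preceding lemma's expectation bound $\Ex(F) \le 2\ell^2/(n-1)$, and the corollary is just the standard passage from a vanishing first moment to an a.a.s.\ statement. The only point worth stating carefully is the identification of $\{F = 0\}$ with the supercommuting event, so that Markov's inequality applied to $F$ gives exactly what we want; once that is noted, the proof is two lines.
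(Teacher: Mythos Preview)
Your proof is correct and essentially identical to the paper's: both identify supercommuting with the event $\{F=0\}$ and pass from $\Ex(F)\to 0$ to $\Prob(F=0)\to 1$ via the first-moment method. The paper states this inference in one line without naming Markov's inequality explicitly, but the argument is the same.
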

\begin{proof} The elements
$V$ and $W$ supercommute precisely when every $V_i$ commutes with every  $W_j$,  that is when $F = 0$.
Since $F$ is a nonnegative integer random variable, and $\Ex(F) \to 0$ we have that $\Prob(F = 0) \to 1$.
\end{proof}

\subsection{Part 2 of Theorem \ref{theorem:abelian}: when $\ell = c\sqrt{n}$.}\label{csqrtn:subsec}
We start with a heuristic argument. For $V$ and $W$ to supercommute, $V_i$ must commute with $W_j$ for all $1 \le i,j \le \ell$. The probability that a given $V_i$ and $W_j$ commute is $1 - 2/(n-1)$ for most cases. Since there are $\ell^2$ such pairs, the probability that they all commute is
\[\left(1 - \frac{2}{n-1}\right)^{\ell^2} = \left(1 - \frac{2}{n-1}\right)^{c^2 n} \to \frac{1}{e^{2c^2}}.\]
This argument assumes independence of each $V_i, W_j$ pair commuting, which does not in general hold. However, we are able to show that limiting probability for abeilianess is nonetheless $1/e^{2c^2}$, as predicted.

If we fix the $V_i$'s, there is a specific set of $k$'s for which $A_k^{\pm 1}$ fails to commute with at least one $V_i$. Let $B$ be the number of such $k$'s; then since the $W_j$'s are chosen independently, the probability that all of them commute with $V$ is given by
\begin{equation}\label{eqn:supcomeqn} \Prob( V \textrm{ and } W\,  \textrm{supercommute} )=\left(1 - \frac{B}{n-1}\right)^\ell.\end{equation}

Now we have to say something about the distribution of $B$. Imagine a row of $n-1$ bins.  For each $V_i = A_k^{\pm 1}$, we put a ball in bin $k-1$ and a ball in bin $k+1$.
Then $B$ is the number of non-empty bins. Since there are $2\ell$ balls, two\footnote{When $V_i = A_1^{\pm 1}$ or $A_{n-1}^{\pm 1}$ only one ball is added; but this almost never happens as $n \to \infty$.} for each $V_i$, we have $2 \le B \le 2\ell$.  Let $D$ be the difference $2 \ell - B$. We will show that the expected value of $D$ approaches a constant.

\begin{lemma}\label{lemma:d-exp}
If $\ell= c \sqrt{n}$ then  $\Ex(D) \to 2c^2$ as $n \to \infty$.
\end{lemma}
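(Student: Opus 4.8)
The plan is to compute $\Ex(D)$ by expressing $D = 2\ell - B$ as a sum of indicator variables over the bins and applying linearity of expectation. Recall that each $V_i = A_k^{\pm 1}$ contributes a ball to bins $k-1$ and $k+1$, so there are $2\ell$ balls total (up to the negligible boundary cases noted in the footnote), and $B$ counts the nonempty bins. Thus $D$ counts the ``excess'' balls — the number by which the ball count exceeds the bin count — which can be rewritten as $D = \sum_{m=1}^{n-1} \max\{0, (\text{number of balls in bin } m) - 1\}$, i.e. the total number of ``collisions.'' First I would set up this bookkeeping carefully, so that $\Ex(D) = \sum_m \Ex(\max\{0, N_m - 1\})$ where $N_m$ is the number of balls landing in bin $m$.

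Next I would compute, for a fixed bin $m$, the probability that bin $m$ receives at least two balls, since $\Ex(\max\{0,N_m-1\})$ is dominated by this event (three or more balls in one bin contributes a lower-order term, as will be checked). Each of the $\ell$ letters $V_i$ independently drops a ball in bin $m$ with some probability $p_m = \Theta(1/n)$: precisely, $V_i$ puts a ball in bin $m$ iff $V_i \in \{A_{m-1}^{\pm1}, A_{m+1}^{\pm1}\}$, which happens with probability $\tfrac{4}{2(n-1)} = \tfrac{2}{n-1}$ for interior $m$ (and $\tfrac{1}{n-1}$ near the boundary). Then $N_m$ is $\mathrm{Binomial}(\ell, p_m)$, and the expected number of collision-pairs in bin $m$ is $\binom{\ell}{2} p_m^2 \sim \tfrac{\ell^2}{2}\cdot\tfrac{4}{n^2} = \tfrac{2\ell^2}{n^2}$ for interior bins. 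Summing over the $n-1$ bins (and noting the boundary bins and the higher-multiplicity corrections are negligible because $\ell = c\sqrt n$ forces $\ell^3/n^3 \to 0$ per bin times $n$ bins $\to 0$) gives $\Ex(D) \sim (n-1)\cdot \tfrac{2\ell^2}{n^2} \to 2c^2$ once we substitute $\ell^2 = c^2 n$.

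The main obstacle I anticipate is handling the distinction between "$\Ex(\max\{0, N_m-1\})$" and "the expected number of collision pairs $\binom{N_m}{2}$" — these differ when $N_m \ge 3$, so I need to argue that the contribution of triple-or-higher occupancy is $o(1)$ after summing over all bins. This is a routine second-moment-type estimate: $\Ex(\binom{N_m}{2}) - \Ex(\max\{0,N_m-1\}) \le \Ex(\binom{N_m}{3})$-type bounds, and $\sum_m \binom{\ell}{3} p_m^3 = \Theta(n \cdot \ell^3/n^3) = \Theta(\ell^3/n^2) \to 0$ since $\ell^3 = c^3 n^{3/2} = o(n^2)$. A secondary bookkeeping point is correctly tracking the boundary bins ($m=1$ and $m=n-1$, and the bins adjacent to the ends) where the ball-dropping probabilities differ; these affect only $O(1)$ bins and contribute $O(\ell^2/n^2) = O(1/n) \to 0$, so they can be absorbed into the error term. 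Once these two technical points are dispatched, the computation is a direct application of linearity of expectation.
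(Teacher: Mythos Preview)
Your argument is correct, but it takes a slightly more roundabout route than the paper's. The paper does not decompose $D$ into per-bin excess counts; instead it computes $\Ex(B)$ directly via the complementary count of \emph{empty} bins. Writing $X=\sum_i X_i$ with $X_i=\mathbf{1}[\text{bin }i\text{ empty}]$, one has the closed form $\Ex(X_i)=(1-\tfrac{2}{n-1})^\ell$, hence $\Ex(B)=(n-1)\bigl(1-(1-\tfrac{2}{n-1})^\ell\bigr)$ and $\Ex(D)=2\ell-\Ex(B)$; a single limit computation then yields $2c^2$. Your collision-pair approximation $\Ex\binom{N_m}{2}=\binom{\ell}{2}p_m^2$ together with the triple-occupancy error bound is effectively the Taylor expansion of this same closed form, carried out term by term. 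What you gain is an explicit ``birthday-paradox'' interpretation of where the $2c^2$ comes from; what the paper gains is that the closed form makes the triple-occupancy and boundary-bin bookkeeping unnecessary, since all higher-order terms are already packaged inside $(1-\tfrac{2}{n-1})^\ell$.
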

\begin{proof}
Let $V=V_1V_2 \dotsm V_\ell$. First, we count the number $X$ of ``empty bins".
We write  $X = \sum X_i$, where 

\[ X_i = \begin{cases}
0 & \text{if $A_{i+1}$ or $A_{i-1}$ appears in the word V} \\
1 & \text{otherwise}.
\end{cases}\]

Note that the behaviors for the end bins (when $i=1$ or $i=n-1$) are slightly different than the other bins but asymptotically this difference will not be important.

Since each element $V_i$ is chosen independently, we have,

\[\Ex(X_i) =  \Prob(X_i=1)= \left(1-\frac{2}{n-1}\right)^\ell.\]

Therefore, $\Ex(X) = (n-1) \left(1-\frac{2}{n-1}\right)^\ell$. Since $B$ is the number of nonempty bins, $B+X = n-1$, and we have,

\[\Ex(B) = (n-1)-(n-1) \left(1-\frac{2}{n-1}\right)^\ell = (n-1)\left(1-\left(1-\frac{2}{n-1}\right)^\ell \right).\]

Finally, since $D$ is the difference $2 \ell - B$, the expected value of $D$ is $\Ex(D) = 2\ell - E(B)$. Taking the limit as $n$ goes to infinity gives the result. 
\end{proof}

In order to evaluate the limit of Equation \eqref{eqn:supcomeqn} as $\ell \to \infty$ we need to control  the size of $B=2\ell -D$. For this we consider two cases: when $D\geq \log{\ell}$ and when $D \leq \log{\ell}$.

\begin{lemma}\label{cor:d-bound}
If $\ell = c \sqrt{n}$ then $\Prob(D \ge \log \ell) \to 0$ as $n \to \infty$.
\end{lemma}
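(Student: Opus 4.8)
The plan is to show $\Prob(D \ge \log\ell) \to 0$ via Markov's inequality, once we have a suitable moment bound on $D$. Lemma \ref{lemma:d-exp} already gives $\Ex(D) \to 2c^2$, so a naive application of Markov yields $\Prob(D \ge \log\ell) \le \Ex(D)/\log\ell \to 0$ directly, since $\log\ell \to \infty$ while $\Ex(D)$ stays bounded. So the whole statement should follow from Markov's inequality applied to the nonnegative integer-valued random variable $D$ together with the bound $\Ex(D) \le 2c^2 + o(1)$ already established.

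Concretely, I would first recall that $D = 2\ell - B = X - (n-1-2\ell)$ is a fixed linear shift of the number of empty bins $X$ (using $B + X = n-1$), and that $D$ is a nonnegative random variable — this nonnegativity is exactly the content of the observation $2 \le B \le 2\ell$ made just before Lemma \ref{lemma:d-exp}. Then I would write
\[
\Prob(D \ge \log\ell) \le \frac{\Ex(D)}{\log\ell},
\]
and invoke Lemma \ref{lemma:d-exp} to conclude $\Ex(D) \to 2c^2$, hence $\Ex(D)$ is bounded above by, say, $2c^2 + 1$ for $n$ large. Since $\ell = c\sqrt{n} \to \infty$, we have $\log\ell \to \infty$, so the right-hand side tends to $0$, giving the claim.

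I do not expect any genuine obstacle here; the only point requiring a little care is making sure the hypothesis of Markov's inequality is legitimately in place, i.e.\ that $D \ge 0$ always (not just in expectation). This is guaranteed because every one of the $2\ell$ balls lands in some bin, so the number of occupied bins $B$ can be at most $2\ell$, whence $D = 2\ell - B \ge 0$ deterministically. With that, Markov applies verbatim and the limit follows. (One could alternatively bound a second moment of $D$ for a sharper Chernoff-type tail, but that is unnecessary for the stated conclusion, since the threshold $\log\ell$ already grows without bound against the bounded mean.)
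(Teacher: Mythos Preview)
Your proposal is correct and is essentially identical to the paper's proof: both apply Markov's inequality to the nonnegative random variable $D$, using that $\Ex(D)\to 2c^2$ from Lemma~\ref{lemma:d-exp} while $\log\ell\to\infty$. Your additional remark verifying $D\ge 0$ deterministically is a nice touch that the paper leaves implicit.
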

\begin{proof}
Markov's inequality tells us that $\Prob(D \ge \log \ell) \le \Ex(D) / \log \ell$. Since $\Ex(D)$ converges to a constant by Lemma \ref{lemma:d-exp} but $\log \ell$ grows without bound, this probability goes to $0$.
\end{proof}

\begin{lemma}\label{lemma:ab-sqrt-small-d}
If $\ell = c\sqrt{n}$ then $\Prob(\text{$G_{\ell, n}$ is abelian} \mid D < \log \ell) \to 1/e^{2c^2}$ as $n \to \infty$.
\end{lemma}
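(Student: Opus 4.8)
The plan is to reduce the statement to the \emph{supercommuting} probability, which by Equation~\eqref{eqn:supcomeqn} is an explicit function of $B=2\ell-D$, and then to use that conditioning on $D<\log\ell$ pins $B$ to within $\log\ell$ of $2\ell$, so that the limit can be read off from a squeeze. The two inputs that make this work are Lemma~\ref{lemma:supercommute} (commuting and supercommuting are asymptotically equivalent, since $\ell=c\sqrt n\in o(n)$) and Lemma~\ref{cor:d-bound} (which rests on Lemma~\ref{lemma:d-exp} and says $\Prob(D<\log\ell)\to 1$).

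First I would pass from ``abelian'' to ``supercommute.'' The group $G_{\ell,n}$ is abelian exactly when $V$ and $W$ commute, and supercommuting implies commuting, so ``abelian'' is the disjoint union of ``$V,W$ supercommute'' and ``$V,W$ commute but do not supercommute.'' The event $\{D<\log\ell\}$ depends only on the word $V$, and by Lemma~\ref{cor:d-bound} has probability $1-o(1)$. Intersecting with $\{D<\log\ell\}$ and using Lemma~\ref{lemma:supercommute} to bound $\Prob(\text{commute but not supercommute})=o(1)$, then dividing by $\Prob(D<\log\ell)=1-o(1)$, yields
\[
\Prob(G_{\ell,n}\text{ abelian}\mid D<\log\ell)=\Prob(V,W\text{ supercommute}\mid D<\log\ell)+o(1).
\]

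Next I would evaluate the supercommuting probability. Conditioning first on the word $V$, which determines both $B$ and the event $\{D<\log\ell\}$, Equation~\eqref{eqn:supcomeqn} gives
\[
\Prob(V,W\text{ supercommute}\mid D<\log\ell)=\Ex\left[\left(1-\frac{2\ell-D}{n-1}\right)^{\ell}\,\Big|\,D<\log\ell\right].
\]
On the event $\{D<\log\ell\}$ we have $0\le D<\log\ell$ (recall $2\le B\le 2\ell$), and since $2\ell=2c\sqrt n<n-1$ for large $n$ the base lies in $(0,1)$, so the integrand is squeezed between the deterministic quantities $\left(1-\tfrac{2\ell}{n-1}\right)^{\ell}$ and $\left(1-\tfrac{2\ell-\log\ell}{n-1}\right)^{\ell}$. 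Taking logarithms and using $\log(1-x)=-x+O(x^2)$ (valid as $\tfrac{2\ell}{n-1}\sim\tfrac{2c}{\sqrt n}\to 0$), the leading term is $-\tfrac{2\ell^2}{n-1}=-\tfrac{2c^2 n}{n-1}\to-2c^2$, the $\log\ell$ correction contributes $\tfrac{\ell\log\ell}{n-1}=O\!\left(\tfrac{\log n}{\sqrt n}\right)\to 0$, and the quadratic error contributes $\ell\cdot O\!\left((\ell/n)^2\right)=O(n^{-1/2})\to 0$. Hence both bounding sequences converge to $e^{-2c^2}$, so the conditional expectation does too, and combining with the previous display gives the lemma.

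Every step here is elementary once Lemmas~\ref{lemma:supercommute}, \ref{lemma:d-exp}, and~\ref{cor:d-bound} are available; the only points that need care are the bookkeeping in the second paragraph — checking that conditioning on the positive-probability event $\{D<\log\ell\}$ does not spoil the $o(1)$ estimate from Lemma~\ref{lemma:supercommute} (it does not, precisely because $\Prob(D<\log\ell)\to 1$), and that $\{D<\log\ell\}$ is a function of $V$ alone so that Equation~\eqref{eqn:supcomeqn} may be applied inside the conditional expectation — together with verifying that the Taylor error terms are genuinely uniform over the event being conditioned on, which is exactly what the $D<\log\ell$ truncation buys us.
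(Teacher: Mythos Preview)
Your proof is correct and follows essentially the same approach as the paper: reduce abelianness to supercommuting via Lemma~\ref{lemma:supercommute}, then squeeze $(1-B/(n-1))^\ell$ using $0\le D<\log\ell$ and show both bounds tend to $e^{-2c^2}$. You are in fact more careful than the paper about the conditioning (writing the supercommuting probability as a conditional expectation over the $V$-measurable event) and you use a Taylor expansion where the paper cites L'H\^opital, but these are cosmetic differences.
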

\begin{proof}
Recall (by Lemma \ref{lemma:supercommute} and Equation \ref{eqn:supcomeqn}) that
 \begin{eqnarray*}\lim_{n \to \infty}\Prob(\text{$G_{\ell, n}$ is abelian}) &=& \lim_{n \to \infty}\Prob(\text{$W$ and $V$ supercommute})\\
  &=& \lim_{n \to \infty} \left(1 - \frac{B}{n-1}\right)^\ell
 \end{eqnarray*}
and that by definition of $D$, $B = 2\ell - D$. Since  $0< D < \log \ell$, we have
\[\left(1 - \frac{2\ell }{n-1}\right)^\ell  \leq \left(1 - \frac{2\ell - D}{n-1}\right)^\ell  \leq \left(1 - \frac{2\ell - \log \ell }{n-1}\right)^\ell. \]
Using standard techniques (taking the logarithm and using L'H\^opital's rule) one can show that as $n \to \infty$ both the extreme functions limit to $1/e^{2c^2}$, and the result follows.

\end{proof}

\begin{lemma}\label{lemma:ab-sqrt}
If $\ell = c \sqrt{n}$ then $\Prob(\text{$G_{\ell, n}$ is abelian}) \to 1/e^{2 c^2}$ as $n \to \infty$.
\end{lemma}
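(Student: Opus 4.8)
The plan is to combine the two cases $D \geq \log \ell$ and $D < \log \ell$ via the law of total probability. First I would write
\[
\Prob(\text{$G_{\ell,n}$ is abelian}) = \Prob(\text{$G_{\ell,n}$ is abelian} \mid D < \log \ell)\Prob(D < \log \ell) + \Prob(\text{$G_{\ell,n}$ is abelian} \mid D \ge \log \ell)\Prob(D \ge \log \ell).
\]
By Lemma \ref{cor:d-bound}, $\Prob(D \ge \log \ell) \to 0$, so the second summand is bounded by $\Prob(D \ge \log \ell) \to 0$ and vanishes in the limit. Correspondingly $\Prob(D < \log \ell) \to 1$, so the first summand has the same limit as $\Prob(\text{$G_{\ell,n}$ is abelian} \mid D < \log \ell)$.

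Next I would invoke Lemma \ref{lemma:ab-sqrt-small-d}, which states exactly that $\Prob(\text{$G_{\ell,n}$ is abelian} \mid D < \log \ell) \to 1/e^{2c^2}$. Putting these together, $\Prob(\text{$G_{\ell,n}$ is abelian}) \to 1/e^{2c^2}$, which is the claim. Since this whole argument is a two-line application of results already established, there is no real obstacle remaining here — the substantive work was done in Lemma \ref{lemma:d-exp} (computing $\Ex(D) \to 2c^2$), Lemma \ref{cor:d-bound} (Markov's inequality), and Lemma \ref{lemma:ab-sqrt-small-d} (the L'Hôpital estimate squeezing the two extreme expressions to $1/e^{2c^2}$). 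One small point worth stating carefully is why $\Prob(\text{$G_{\ell,n}$ is abelian})$ equals $\Prob(\text{$V$ and $W$ supercommute})$ in the limit: this is Lemma \ref{lemma:supercommute}, valid since $\ell = c\sqrt{n} \in o(n)$, so commuting and supercommuting are asymptotically equivalent, and supercommuting of the generators is equivalent to $G_{\ell,n}$ being abelian.

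If one wanted to avoid even mentioning conditioning, an alternative is a direct sandwich: on the event $\{D < \log \ell\}$ (which has probability $\to 1$), Equation \eqref{eqn:supcomeqn} gives $\left(1 - \frac{2\ell}{n-1}\right)^\ell \le \Prob(V,W \text{ supercommute} \mid \text{fixed } V) \le \left(1 - \frac{2\ell - \log\ell}{n-1}\right)^\ell$, both ends tending to $1/e^{2c^2}$; then absorbing the negligible event $\{D \ge \log\ell\}$ costs only $o(1)$. I expect the cleanest write-up is the law-of-total-probability version, as it simply stitches the named lemmas together. The only mild subtlety is bookkeeping the $o(1)$ error terms so the final limit is genuinely $1/e^{2c^2}$ and not merely bounded between two such quantities, but the squeeze in Lemma \ref{lemma:ab-sqrt-small-d} already handles that.
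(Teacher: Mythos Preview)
Your proposal is correct and follows exactly the same approach as the paper: the law of total probability split on $\{D < \log \ell\}$ versus $\{D \ge \log \ell\}$, then applying Lemma~\ref{cor:d-bound} and Lemma~\ref{lemma:ab-sqrt-small-d}. Your additional remarks about Lemma~\ref{lemma:supercommute} and the alternative sandwich argument are sound but not needed for the proof as stated.
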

\begin{proof} We have
\begin{align*}
\lim_{n \to \infty} \Prob(\text{$G_{\ell, n}$ is abelian}) &= \lim_{n \to \infty} \Prob(\text{$G_{\ell, n}$ is abelian} \mid D < \log \ell) \Prob(D < \log \ell) \\
&\qquad+ \lim_{n \to \infty} \Prob(\text{$G_{\ell, n}$ is abelian} \mid D \ge \log \ell) \Prob(D \ge \log \ell). \\
\intertext{By Lemma \ref{cor:d-bound} the second term goes to zero and the second factor of the first term goes to one, leaving just}
&= \lim_{n \to \infty} \Prob(\text{$G_{\ell, n}$ is abelian} \mid D < \log \ell) \\
&= \frac{1}{e^{2c^2}} 
\end{align*}
by Lemma \ref{lemma:ab-sqrt-small-d}.
\end{proof}

\subsection{Part 3 of Theorem \ref{theorem:abelian}: when $\ell \in \omega(\sqrt{n})$ and $\ell \in o(n)$.} 
By Lemma  \ref{lemma:supercommute} we know that when $\ell \in o(n)$ supercommuting is asymptotically the same as commuting.
Therefore to show that  asymptotically $G_{\ell, n}$ is almost never abelian we only need to show that $V$ and $W$ almost never supercommute. To show this, we consider $n-1$ ``bins", one for each $A_i$. We think of each element $V_i$ as a ball of a particular type, say red. Similarly each of the elements $W_i$ correspond to a blue ball. We throw the $\ell$ red balls, and $\ell$ blue balls into the $n-1$ bins, and look for a particular collision that implies $V$ and $W$ don't supercommute. To prove this, we will use the following Lemma which is a generalized (to 2 colors)  version of the probabilistic pigeonhole principle. A statement for $q$-colors appears in \cite{density-one-half}.

\begin{fact}[Lemma 5 in \cite{density-one-half}]\label{fact:pigeonhole}
Let $\mu$ be any probability measure on a set of size $n$. Let $z_1, \dotsc, z_{2\ell}$ be chosen randomly and independently using $\mu$. Then
\[\Prob(\exists \, i,j \,\, \textrm{with}\,\, i \le \ell < j, z_i = z_j) \ge 1 - 2 e^{-c\ell / \sqrt{n}}\]
for some universal constant c.
\end{fact}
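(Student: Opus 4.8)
The plan is to bound the complementary event $\mathcal{N}$ that the ``left'' support $L := \{z_1,\dots,z_\ell\}$ and the ``right'' support $R := \{z_{\ell+1},\dots,z_{2\ell}\}$ are disjoint. Conditioning on $L$ and using the independence of the two halves gives the basic identity
\[ \Prob(\mathcal{N}) = \Ex_L\big[(1-\mu(L))^\ell\big], \]
so everything reduces to showing this expectation is at most $2e^{-c\ell/\sqrt n}$. First I would dispose of two easy ranges. If $\ell \le \sqrt n$ then $2e^{-c\ell/\sqrt n}\ge 2e^{-c}\ge 1$ for $c$ small (say $c\le 1/32$), so the claimed bound is vacuous. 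If $\ell \ge n$, I would write $\Prob(\mathcal{N})=\sum_S \Prob(L=S)(1-\mu(S))^\ell$ over subsets $S$ of the (at most $n$-element) support, bound $\Prob(L=S)\le \Prob(L\subseteq S)=\mu(S)^\ell$, use $\mu(S)(1-\mu(S))\le \tfrac14$ and the fact that there are at most $2^n$ such subsets to get $\Prob(\mathcal{N})\le 2^n 4^{-\ell}\le 2^{-\ell}$, which lies well below $2e^{-c\ell/\sqrt n}$ once $\ell\ge n$.

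The substance is the range $\sqrt n < \ell < n$, which I would handle by a dichotomy on the heaviest atom of $\mu$. If some atom $x^*$ has $\mu(x^*)\ge 1/\sqrt n$, then $\Prob(x^*\notin L)=(1-\mu(x^*))^\ell\le e^{-\ell/\sqrt n}$, while on the event $x^*\in L$ one has $\mu(L)\ge 1/\sqrt n$ and hence $(1-\mu(L))^\ell\le e^{-\ell/\sqrt n}$; adding the two contributions gives $\Prob(\mathcal{N})\le 2e^{-\ell/\sqrt n}$. Otherwise every atom has mass $<1/\sqrt n$, and I would set $T=\{x:\mu(x)\ge 1/(2n)\}$. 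Because there are at most $n$ atoms, the atoms outside $T$ carry mass $<1/2$, so $\mu(T)>1/2$. Now $\mu(L)\ge \mu(L\cap T)\ge \frac{1}{2n}\,|L\cap T|$, and $|L\cap T|=\sum_{x\in T}\mathbf{1}[x\in L]$ is a sum of negatively associated indicators whose expectation — estimated via $1-(1-p)^\ell\ge \tfrac12\min(\ell p,1)$ together with the ``spread'' hypothesis, which forces the atoms of $T$ having mass $\ge 1/\ell$ to be numerous — is at least $\tfrac14\min(\ell,\sqrt n)=\tfrac{\sqrt n}{4}$. A multiplicative Chernoff bound for negatively associated variables then gives $\Prob\big(|L\cap T|\le \tfrac{\sqrt n}{8}\big)\le e^{-\sqrt n/32}$, and off this event $\mu(L)\ge \frac{1}{16\sqrt n}$, so $(1-\mu(L))^\ell\le e^{-\ell/(16\sqrt n)}$. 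Combining and using $\ell<n$ (so that $\sqrt n>\ell/\sqrt n$) yields $\Prob(\mathcal{N})\le e^{-\ell/(16\sqrt n)}+e^{-\sqrt n/32}\le 2e^{-\ell/(32\sqrt n)}$. Taking $c=1/32$ then covers all cases.

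I expect the main obstacle to be pinning the exponent at order $\ell/\sqrt n$ uniformly in both $n$ and $\ell$, rather than at the ``wrong'' rates $\ell^2/n$ (what a naive birthday computation on the uniform measure gives when $\ell\lesssim\sqrt n$) or $\sqrt n$ (what the coverage argument gives for very spread measures): this is precisely why the dichotomy threshold sits at mass $1/\sqrt n$ and why the ranges $\ell\le\sqrt n$, $\sqrt n<\ell<n$, and $\ell\ge n$ must be separated. A secondary technical point is that in the spread case one cannot get away with a bounded-differences inequality applied to $\mu(L)$ directly — the per-sample influence is as large as $1/\sqrt n$, which is too big to give an $e^{-\Omega(\sqrt n)}$ tail — so one genuinely needs the negative association of the occupancy indicators $\mathbf{1}[x\in L]$ in order to run a multiplicative Chernoff bound and keep the failure probability exponentially small in $\sqrt n$.
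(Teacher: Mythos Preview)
The paper does not prove this statement at all: it is stated as a \emph{Fact} and attributed to another source (``Lemma~5 in \cite{density-one-half}''), so there is no in-paper argument to compare against. Your task is therefore to give a self-contained proof, and the one you outline is correct.

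A few remarks on the write-up. The key estimate in the ``spread'' sub-case --- that $\Ex|L\cap T|\ge\tfrac{\sqrt n}{4}$ --- deserves one more line of justification than you give it. The clean way to see it is that under the hypothesis $\mu(x)<1/\sqrt n$ and the range $\ell>\sqrt n$ one has $\min(\ell\mu(x),1)\ge\sqrt n\,\mu(x)$ for every atom (check both cases $\ell\mu(x)\gtrless 1$), so
\[
\Ex|L\cap T|\ \ge\ \tfrac12\sum_{x\in T}\min(\ell\mu(x),1)\ \ge\ \tfrac{\sqrt n}{2}\,\mu(T)\ >\ \tfrac{\sqrt n}{4}.
\]
Your phrasing (``forces the atoms of $T$ having mass $\ge 1/\ell$ to be numerous'') hints at a case split that is not actually needed here. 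Second, the appeal to negative association of the occupancy indicators $\mathbf 1[x\in L]$ and the resulting multiplicative Chernoff bound is standard (Joag-Dev--Proschan, or Dubhashi--Ranjan), but it would be worth citing explicitly since this is the one nontrivial probabilistic tool. With those two clarifications the argument is complete, and your diagnosis in the final paragraph --- why the dichotomy threshold must sit at $1/\sqrt n$ and why bounded differences fails in the spread case --- is exactly right.
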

In particular, when $\ell \in \omega(\sqrt{n})$, this probability approaches $1$ as $n \to \infty$.

\begin{lemma}\label{lemma:ab-large}
When $\ell \in \omega(\sqrt{n})$ as $n \to \infty$ the probability that $V,W$ supercommute goes to zero. 
\end{lemma}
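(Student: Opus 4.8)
The plan is to apply Fact~\ref{fact:pigeonhole} to a suitable probability measure that encodes the ``neighboring matrices collide'' event, and then translate that collision into a failure of supercommuting via Corollary~\ref{cor:elementary_commute}. Recall that $V$ and $W$ fail to supercommute precisely when some letter $V_i$ fails to commute with some letter $W_j$, which by Corollary~\ref{cor:elementary_commute} happens exactly when $V_i = A_k^{\pm1}$ and $W_j = A_{k\pm1}^{\pm1}$ for some $k$ — that is, when $V$ and $W$ use neighboring elementary matrices.

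First I would set up the bin/ball picture from the preamble: assign to each letter $V_i$ the pair of bins $\{k-1, k+1\}$ when $V_i = A_k^{\pm1}$ (with the obvious truncation at the two ends), and similarly assign to each letter $W_j$ the single bin $k$ when $W_j = A_k^{\pm 1}$. A red--blue collision in some bin $m$ — a $V_i$ contributing bin $m$ and a $W_j$ landing in bin $m$ — means $V_i = A_{m\pm 1}^{\pm1}$ and $W_j = A_m^{\pm1}$, hence $V_i$ and $W_j$ are neighboring, so $V$ and $W$ do not supercommute. To fit the hypotheses of Fact~\ref{fact:pigeonhole}, which wants $2\ell$ i.i.d.\ samples from a single measure $\mu$ on a set of size $n$, I would think of the sample attached to $V_i$ as its ``left-neighbor bin'' $k+1$ (chosen uniformly among $\{2,\dots,n\}$, say, up to the negligible endpoint adjustments) and the sample attached to $W_j$ as its own index $k$. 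A coincidence $z_i = z_j$ with $i \le \ell < j$ then says exactly that the left-neighbor bin of some $V_i$ equals the index bin of some $W_j$, i.e.\ $V_i = A_{k-1}^{\pm1}$ and $W_j = A_k^{\pm1}$ are neighboring, forcing non-supercommuting.

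Since both the $V_i$-samples and the $W_j$-samples are i.i.d.\ from (essentially) the uniform measure $\mu$ on a set of size $n-1$, and they are mutually independent, Fact~\ref{fact:pigeonhole} gives
\[
\Prob(\text{$V,W$ do not supercommute}) \ge 1 - 2e^{-c\ell/\sqrt{n}}.
\]
When $\ell \in \omega(\sqrt n)$ we have $\ell/\sqrt n \to \infty$, so the right-hand side tends to $1$; equivalently $\Prob(\text{$V,W$ supercommute}) \to 0$, which is the claim.

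The main obstacle — really the only subtle point — is the bookkeeping at the two end bins: when $V_i = A_1^{\pm1}$ the pair of neighbor bins degenerates to just $\{2\}$, and when $V_i = A_{n-1}^{\pm1}$ it degenerates to $\{n-2\}$, so the map ``$V_i \mapsto$ left-neighbor bin'' is not quite a uniform sample on an $(n-1)$-element set. I would handle this exactly as elsewhere in the paper: the probability that any fixed walk of length $\ell \in o(n)$ ever uses $A_1^{\pm1}$ or $A_{n-1}^{\pm1}$ goes to $0$ (Lemma~\ref{lemma:sparse} applied to $i=1$ and $i=n-1$, together with a union bound over the two endpoints), so conditioning on this almost-sure event makes the samples genuinely uniform on $\{2,\dots,n-1\}$ and changes the conclusion only by an additive $o(1)$. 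Alternatively one can simply enlarge the sample space by one or two dummy bins, absorbing the discrepancy into the universal constant; either way the conclusion $\Prob(\text{supercommute}) \to 0$ stands.
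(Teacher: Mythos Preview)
Your approach is the paper's: encode the letters of $V$ and $W$ as samples and invoke Fact~\ref{fact:pigeonhole}. The only substantive difference is in how you repair the off-by-one mismatch between the $V$-samples (in $\{2,\dots,n\}$) and the $W$-samples (in $\{1,\dots,n-1\}$) so that Fact~\ref{fact:pigeonhole}'s hypothesis of a \emph{single} measure $\mu$ is met. The paper uses a cyclic shift: for $j>\ell$ it sets $z_j = f(W_{j-\ell})-1$, wrapping the value $1$ around to $n-1$, so that all $2\ell$ samples are genuinely i.i.d.\ uniform on $\{1,\dots,n-1\}$; the spurious wrap-around collision ($f(V_i)=n-1$, $f(W_{j-\ell})=1$) is then discarded as an $o(1)$ error. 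Your fix instead conditions on the walks avoiding the endpoint generators and appeals to Lemma~\ref{lemma:sparse}. Two remarks. First, Lemma~\ref{lemma:sparse} requires $\ell\in o(n)$, so your argument as written proves the lemma only on $\omega(\sqrt n)\cap o(n)$; that is exactly the range in which the lemma is applied (the corollary immediately following it), but it is narrower than the stated hypothesis $\ell\in\omega(\sqrt n)$. Second, conditioning \emph{both} walks away from \emph{both} endpoints, as you wrote it, still leaves a mismatch ($f(V_i)+1$ uniform on $\{3,\dots,n-1\}$ versus $f(W_j)$ uniform on $\{2,\dots,n-2\}$); you want to remove only $A_{n-1}^{\pm1}$ from $V$ and only $A_1^{\pm1}$ from $W$, which puts both families on $\{2,\dots,n-1\}$. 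With that tweak your conditioning argument is correct in the $o(n)$ regime; the paper's cyclic shift is the cleaner device if you want the full $\omega(\sqrt n)$ statement.
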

\begin{proof}
Let $f$ be the function that takes $A_k^{\pm 1}$ to $k$, and define $2 \ell$ random variables $\set{z_i}$ as follows: when $i \le \ell$,
\begin{align*}
z_i &= f(V_i) \\
\intertext{and when $i > \ell$,}
z_i &= \begin{cases}
n-1 &\text{if $f(W_{i - \ell}) = 1$} \\
f(W_{i-\ell}) - 1 &\text{otherwise}
\end{cases}
\end{align*}
Then the conditions of Fact \ref{fact:pigeonhole} apply to the $z_i$'s, and so asymptotically almost surely there exist an $i$ and $j$ so that $i \le \ell < j$ and $z_i = z_j$. This means that either $z_i=f(V_i) = f(W_{j - \ell}) - 1=z_j$ or $f(V_i) = n-1$ and $f(W_{j - \ell}) = 1$. The latter case has probability $1/(n-1)$, and so as $n \to \infty$ we are almost surely in the former case. Thus $V_i = A_k^{\pm 1}$ and $W_{j - \ell} = A_{k+1}^{\pm 1}$. Then $V_i$ and $W_j$ do not commute, and so $V$ and $W$ do not supercommute. 
\end{proof}

\begin{cor} If $\ell = \omega(\sqrt{n})$ and $\ell = o(n)$ then $G_{\ell, n}$ is asymptotically almost surely nonabelian.
\end{cor}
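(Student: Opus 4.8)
The plan is to combine the two lemmas immediately preceding the statement. First recall that $G_{\ell,n} = \langle V, W\rangle$ is abelian if and only if its two generators commute, so it suffices to bound $\Prob(\text{$V$ and $W$ commute})$ from above and show this bound tends to $0$. I would write
\[
\Prob(\text{$V,W$ commute}) \le \Prob(\text{$V,W$ supercommute}) + \Prob(\text{$V,W$ commute but do not supercommute}),
\]
which holds because supercommuting and ``commuting but not supercommuting'' together exhaust the event that $V,W$ commute.

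Next I would handle the two terms separately. The hypothesis $\ell \in \omega(\sqrt n)$ lets me invoke Lemma \ref{lemma:ab-large}, so the first term tends to $0$ as $n \to \infty$. The hypothesis $\ell \in o(n)$ lets me invoke Lemma \ref{lemma:supercommute}, so the second term tends to $0$ as well. (Note both hypotheses are used, one for each term, so this is exactly the regime where the argument closes.) Adding the two estimates gives $\Prob(\text{$V,W$ commute}) \to 0$, hence $\Prob(\text{$G_{\ell,n}$ is abelian}) \to 0$, i.e.\ $G_{\ell,n}$ is asymptotically almost surely nonabelian.

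There is essentially no obstacle here: all the probabilistic content has already been absorbed into Lemmas \ref{lemma:supercommute} and \ref{lemma:ab-large} (the latter resting on the two-color pigeonhole estimate of Fact \ref{fact:pigeonhole}), and this corollary is just the bookkeeping step that patches together the ``$\ell \in \omega(\sqrt n)$'' range with the ``$\ell \in o(n)$'' range. If I wanted to be careful about one minor point, it would be making explicit that the decomposition above is a genuine union bound over events and that no independence is needed for it — only subadditivity of probability. Together with Corollary \ref{cor:ab-o-sqrt} and Lemma \ref{lemma:ab-sqrt}, this completes all three parts of Theorem \ref{theorem:abelian} in the range $\ell \in o(n)$, which is all the sharp threshold at $\sqrt n$ requires.
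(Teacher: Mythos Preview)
Your proof is correct and matches the paper's approach exactly: decompose the event that $V,W$ commute into supercommuting and commuting-but-not-supercommuting, then kill the first term with Lemma~\ref{lemma:ab-large} (using $\ell\in\omega(\sqrt n)$) and the second with Lemma~\ref{lemma:supercommute} (using $\ell\in o(n)$). The paper's version is terser but logically identical.
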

\begin{proof}
By Lemma \ref{lemma:ab-large} the probability that $V,W$ supercommute goes to zero and therefore by Lemma \ref{lemma:supercommute}, $G_{\ell, n}$ is asymptotically almost surely nonabelian.
\end{proof}

\subsection{Part 3 of Theorem \ref{theorem:abelian}: when $\ell \in \omega(n)$}
In this case we need results from Section \ref{sec:dist} on the distribution of superdiagonal entries.
\begin{lemma}
When $\ell \in \omega(n)$ then $G_{\ell,n}$ is a.a.s. not abelian.
\end{lemma}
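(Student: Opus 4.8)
The plan is to use Lemma \ref{lemma:commute}: if $V$ and $W$ commute then $w_i v_{i+1} = w_{i+1} v_i$ for all $1 \le i \le n-2$. In particular, taking $i = 1$, commuting forces $w_1 v_2 - w_2 v_1 = 0$, i.e. the $(1,3)$ entry of the commutator vanishes. So it suffices to show that
\[
\Prob\bigl(v_2 w_1 - v_1 w_2 = 0\bigr) \to 0 \quad \text{as } n \to \infty,
\]
under the hypothesis $\ell \in \omega(n)$. This is a statement purely about the first two superdiagonal entries of $V$ and $W$, which by the setup of Section \ref{sec:dist} are given by lazy random walks; and crucially, $v_1, v_2, w_1, w_2$ are mutually independent (the walk determining $(v_1, v_2)$ never moves in both coordinates at once, and $V$ and $W$ are independent words).

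First I would condition on the event that $w_1 \ne 0$, whose probability I claim tends to $1$: by Lemma \ref{lemma:prob_zero}, $\Prob(w_1 = 0) \sim K\sqrt{n/\ell} \to 0$ since $\ell/n \to \infty$. On this event, $v_2 w_1 - v_1 w_2 = 0$ is equivalent to $v_2 = (w_2/w_1) v_1$, so conditioning further on the values of $v_1, w_1, w_2$ reduces the question to bounding $\Prob(v_2 = t)$ for the fixed value $t = (w_2/w_1)v_1$. Since $v_2$ is a lazy random walk in $\Z$ with symmetric increments, its mass function is maximized at $0$, so $\Prob(v_2 = t) \le \Prob(v_2 = 0) \sim K\sqrt{n/\ell} \to 0$. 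Putting these together,
\begin{align*}
\Prob(v_2 w_1 - v_1 w_2 = 0) &\le \Prob(v_2 w_1 - v_1 w_2 = 0 \mid w_1 \ne 0) + \Prob(w_1 = 0) \\
&\le \Prob(v_2 = 0) + \Prob(w_1 = 0) \to 0,
\end{align*}
which is exactly the same device used in the proof of Lemma \ref{lemma:hyperplane}. Hence $\Prob(V,W \text{ commute}) \to 0$, and since $G_{\ell,n}$ is abelian iff $V,W$ commute, $G_{\ell,n}$ is a.a.s. not abelian.

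The only subtlety — and the one step worth spelling out carefully — is the independence claim and the assertion that $\Prob(v_2 = 0)$ (not merely $\Prob(v_2 = 0 \mid v_1 = 0)$ or similar) satisfies the estimate of Lemma \ref{lemma:prob_zero}. The marginal of $v_2$ alone is indeed a lazy walk with step distribution $\pm 1$ each with probability $1/2n$ and $0$ otherwise, so Lemma \ref{lemma:prob_zero} applies verbatim; and the joint law of $(v_1, v_2)$ is the $\Z^2$ lazy walk described at the start of Section \ref{sec:dist}, under which one checks directly from the product form of the characteristic function that $v_1$ and $v_2$ are \emph{not} independent as a pair but the needed inequality $\Prob(v_2 = t) \le \Prob(v_2 = 0)$ holds regardless by symmetry of the increments. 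I do not anticipate any real obstacle here; the whole argument is a one-coordinate special case of machinery already developed in Sections \ref{sec:prelim} and \ref{sec:dist}. An alternative, if one wanted to avoid even mentioning the joint distribution, would be to invoke Lemma \ref{lemma:hyperplane} directly with $a_1 = w_2$, $a_2 = -w_1$ (after relabeling), noting $\Prob(a_1 \ne 0) = \Prob(w_2 \ne 0) \to 1$, though this requires the coefficients to be independent of the $v_i$, which holds since $V$ and $W$ are independent.
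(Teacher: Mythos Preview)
Your proposal is correct and follows essentially the same approach as the paper: the paper's proof uses Lemma~\ref{lemma:commute} to reduce to showing $\Prob(v_1 w_2 - v_2 w_1 = 0) \to 0$, observes via Lemma~\ref{lemma:prob_zero} that $\Prob(w_2 = 0) \to 0$, and then invokes Lemma~\ref{lemma:hyperplane} directly (with $a_1 = w_2$, $a_2 = -w_1$) --- exactly the alternative you sketch at the end. Your main argument simply unpacks the proof of Lemma~\ref{lemma:hyperplane} inline rather than citing it; one small wobble is that you first assert $v_1,v_2,w_1,w_2$ are mutually independent and then (correctly) retract this for $v_1,v_2$, so it would be cleaner to state up front that the needed bound $\Prob(v_2 = t \mid v_1) \le \Prob(v_2 = 0 \mid v_1)$ follows from unimodality and symmetry of the conditional law, then average.
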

\begin{proof}
By Lemma \ref{lemma:commute}, if $v_1 w_2 \ne v_2 w_1$ then $G_{\ell,n}$ is not abelian. By Lemma \ref{lemma:prob_zero}, $\Prob(w_2 = 0) \sim K \sqrt{n/\ell} \to 0$. Then by Lemma \ref{lemma:hyperplane}, $\Prob(v_1 w_2 = v_2 w_1) = \Prob(v_1 w_2 - v_2 w_1 = 0) \to 0$, and so a.a.s. $v_1 w_2 \ne v_2 w_1$.
\end{proof}

\subsection{Part 3 of Theorem \ref{theorem:abelian}: when $k \leq \ell/n \leq M $}

To complete the proof of Theorem \ref{theorem:abelian} part 3, we need to consider functions $\ell$ which lie in  the complement of $o(n)$, and $\omega(n)$; we therefore consider  functions $\ell$ such that for large enough $n$, there exists constants $k$ and $M$ so that \[k \leq \frac{\ell}{n} \leq M.\] 
  
To show that $G_{\ell,n}$ is not abelian, it is sufficient to find $1 \le i \le n-2$ for which the condition of Lemma \ref{lemma:commute} fails; that is, there exists an $i$ so that $v_i w_{i+1} \neq v_{i+1} w_i$. To do this, we count a subset of pairs of  words $V$ and $W$ which have this property, and show that these pairs occur with high probability. 

\begin{lemma}\label{lemma:comm_config}
Suppose there exist constants $k$ and $M$ so that for large enough $n$, $k\leq \ell/n \leq M$. Then a.a.s. there is some $1 \le i \le n-2$ for which $v_i = \pm 1$, $v_{i+1} = 0$, $w_i = \pm 1$, and $w_{i+1} = \pm 1$.
\end{lemma}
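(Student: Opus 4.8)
The plan is to exhibit, for a suitable fixed range of indices $i$, the event that the pair $(V_i,V_{i+1},W_i,W_{i+1})$ of superdiagonal entry-vectors takes the prescribed form, and then to show that this event occurs for \emph{some} $i$ with probability tending to $1$. Concretely, I would partition $\{1,\dots,n-1\}$ into roughly $n/4$ disjoint blocks of four consecutive indices (say $\{4m+1,4m+2,4m+3,4m+4\}$), and inside each such block look at the middle pair $i=4m+1,\ i+1=4m+2$. The point of spacing the blocks out is independence: since the random walks are lazy walks on $\Z^{n-1}$ whose coordinates are \emph{not} independent, but the indicator of the event ``$A_{i}^{\pm1}$ or $A_{i+1}^{\pm1}$ was used exactly once (resp. not at all)'' depends only on which of the $\ell$ steps landed on generators $A_i$ or $A_{i+1}$; making the blocks disjoint (indeed separated) makes these per-block indicators mutually independent, so a second-moment / direct complement argument applies cleanly.

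For a single block, I would estimate from below the probability $q$ that $v_i=\pm1$, $v_{i+1}=0$, $w_i=\pm1$, $w_{i+1}=\pm1$. The key input is that $\ell/n$ is bounded between $k$ and $M$, so the expected number of times $V$ uses a given generator $A_j^{\pm1}$ is $\ell/(n-1)$, a bounded constant. The event $v_i=\pm1$ is (as in Corollary~\ref{cor:just-once}'s style of reasoning, but now with $\ell=\Theta(n)$) essentially ``exactly one of the $\ell$ steps of $V$ is $A_i^{\pm1}$'', which has probability $\ell\cdot\frac{1}{n-1}\bigl(1-\frac{1}{n-1}\bigr)^{\ell-1}$, bounded below by a positive constant since $\ell/n\to$ something in $[k,M]$. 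Similarly $v_{i+1}=0$ — demanding $A_{i+1}^{\pm1}$ never appears in $V$ — has probability $\bigl(1-\frac{1}{n-1}\bigr)^\ell$, again bounded below by a positive constant $e^{-M}+o(1)$. The same holds for the two conditions on $W$. Since, \emph{within one block}, the four generator-labels $A_i,A_{i+1}$ as they pertain to $V$ and to $W$ involve disjoint randomness once we condition appropriately (the ``appeared exactly once / never'' events for distinct labels and for the independent words $V,W$), one multiplies the bounds and gets $q\ge q_0>0$ with $q_0$ independent of $n$ for all large $n$.

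Finally I would finish with the complement bound. Let $Y_m$ be the indicator of success in block $m$, for $m=1,\dots,\lfloor (n-1)/4\rfloor$. The $Y_m$ are independent (disjoint index sets, independent steps), and $\Prob(Y_m=1)\ge q_0$, so
\[
\Prob\Bigl(\text{no $i$ works}\Bigr)\ \le\ \Prob\Bigl(\bigcap_m \{Y_m=0\}\Bigr)\ =\ \prod_m \Prob(Y_m=0)\ \le\ (1-q_0)^{\lfloor (n-1)/4\rfloor}\ \longrightarrow\ 0
\]
as $n\to\infty$, which is exactly the claim. The main obstacle — and the place that needs the most care — is the independence bookkeeping in the single-block estimate: the coordinates of the lazy walk are genuinely dependent, so one must phrase everything in terms of ``which step used which generator'' (a multinomial-type description) rather than in terms of the coordinate values, and verify that after conditioning on how many steps of $V$ hit $\{A_i,A_{i+1}\}$, the sign of $v_i$ is uniform $\pm1$ so that $\Prob(v_i=\pm1)$ really is what the count suggests; the rest is routine since $\ell=\Theta(n)$ keeps all the relevant probabilities bounded away from $0$ and $1$.
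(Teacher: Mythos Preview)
Your overall strategy is sound and in spirit matches the paper's: both replace the target event by the stricter ``type-$i$'' configuration in which $A_i^{\pm1}$ appears exactly once in $V$, $A_{i+1}^{\pm1}$ never appears in $V$, and each of $A_i^{\pm1},A_{i+1}^{\pm1}$ appears exactly once in $W$, and then argue this occurs for some $i$ a.a.s. Your single-block estimate is essentially right and is easily made rigorous by computing the joint probability directly (as the paper does for $P(S_i)$ and $P(T_i)$) rather than by ``multiplying the bounds''.

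The genuine gap is the cross-block independence claim. The $Y_m$ are \emph{not} independent, no matter how far apart the blocks are spaced. The dependence does not come from overlapping indices; it comes from the multinomial constraint that the total number of steps is exactly $\ell$. If $N_j$ denotes the number of steps of $V$ equal to $A_j^{\pm1}$, then $(N_1,\dots,N_{n-1})$ is $\mathrm{Multinomial}(\ell;\tfrac{1}{n-1},\dots,\tfrac{1}{n-1})$, and occupancy events for disjoint bins are genuinely dependent (e.g.\ knowing $N_j=0$ makes every other $N_{j'}$ stochastically larger). Your phrase ``disjoint index sets, independent steps'' conflates independence of the steps $V_1,\dots,V_\ell$ with independence of the bin counts $N_j$; only the former holds. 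Consequently the product bound $\prod_m \Prob(Y_m=0)\le(1-q_0)^{\lfloor(n-1)/4\rfloor}$ is unjustified as written.

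The paper deals with exactly this issue by a second-moment argument: it restricts to odd $i$ (so that the pairs $(i,i+1)$ are disjoint), sets $X=\sum_i X_i$, computes $\Prob(S_i\cap S_{i'})$ and $\Prob(T_i\cap T_{i'})$ explicitly for $i\ne i'$, and verifies $\Ex(X^2)/\Ex(X)^2\to1$ together with $\Ex(X)\to\infty$. That is the standard tool for sums of weakly dependent indicators, and it is what your argument needs in place of the product step. Alternatively you could Poissonize $\ell$ (replacing it by a $\mathrm{Poisson}(\ell)$ number of steps so the $N_j$ become independent Poissons) and then de-Poissonize; but some such device is required before the complement bound is valid.
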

\begin{proof}

We will look specifically for cases in which $V_j = A_i^{\pm 1}$ for precisely one $j$, $V_j \ne A_{i+1}^{\pm 1}$ for all $j$, $W_j = A_i^{\pm 1}$ for precisely one $j$, and $W_j = A_{i+1}^{\pm 1}$ for precisely one $j$. Note that words $V$ and $W$ of this form have $v_i = \pm 1$, $v_{i+1} = 0$, $w_i = \pm 1$, and $w_{i+1} = \pm 1$. Hence, by Lemma \ref{lemma:commute}, $V$ and $W$ will not commute. It'll be useful to have a  name for this sort of failure to commute, so we'll say this particular sort of pair $(V,W)$ has a  ``type $i$" configuration. Out strategy for this proof is to define a random variable $X$ which counts the expected number of type $i$ configurations for a pair of words $(V,W)$. We then show $E[X^2]/E[X]^2 \to 1$. It will be sufficient to consider only odd values of $i$, and as this makes some of the counting arguments simpler, we make this assumption.

Fix $i$. Let $S_i$ be the set of words $V$ of length $\ell$ which have  $V_j = A_i^{\pm 1}$ for precisely one $j$ and $V_j \ne A_{i+1}^{\pm 1}$ for all $j$. There are $\ell$ indices to choose for the location of $A_{i}^{\pm 1}$, two choices for the exponent on $A_i$, and after subtracting out the elements $A_i^{\pm 1} $ and $A_{i+1}^{\pm 1}$, we have $2(n-3)$ remaining generators to choose from for the remaining $\ell -1$ elements in the word $V$. Since the total number of words of length $\ell$ is $(2(n-1))^\ell$, we have
\[ P(S_i) = \frac{\ell(n-3)^{\ell-1}}{(n-1)^\ell} = \frac{\ell}{n-1} \frac{(n-3)^{\ell -1}}{(n-1)^{\ell-1}} =  \frac{\ell}{n-1} \left(1 - \frac{2}{n-1}\right)^{\ell-1} . \]

%

Let $T_i$ be the set of words $W$  for which $W_j = A_i^{\pm 1}$ and exactly one $j'$ for which $W_{j'} = A_{i+1}^{\pm 1}$. Then we have 
\[P(T_i) =  \frac{\ell(\ell-1)(n-3)^{\ell-2}}{(n-1)^\ell} = \frac{\ell (\ell -1)}{(n-1)^2} \left(1 - \frac{2}{n-1}\right)^{\ell-2} . \]

Since $V$ and $W$ are chosen independently, we have

\begin{equation}\label{eqn:bounds} 
P(S_i , T_i) =  \frac{\ell^2 (\ell -1)}{(n-1)^3} \left(1 - \frac{2}{n-1}\right)^{2\ell-3} .
\end{equation}

Now we compute the probability of $S_i \cap S_{i^\prime}$, for distinct $i$ and $i^\prime$. Counting words of this sort is where we use the convenience of only considering odd indices, so that $|i-i^\prime| \geq 2$.

\[ P(S_i \cap S_{i^\prime}) = \frac{\ell(\ell-1)(n-5)^{\ell-2}}{(n-1)^\ell} = \frac{\ell(\ell -1)}{(n-1)^2}  \left(1 - \frac{4}{n-1}\right)^{\ell-2} . \]

Similarly, we compute the probability of $T_i \cap T_{i^\prime}$. 

\[ P(T_i \cap T_{i^\prime}) = \frac{\ell(\ell-1)(\ell -2)(\ell-3)(n-5)^{\ell-4}}{(n-1)^\ell} = \frac{\ell(\ell -1)(\ell-2)(\ell -3)}{(n-1)^4}  \left(1 - \frac{4}{n-1}\right)^{\ell-4}. \]

Let $n' $ be the number of odd integers in $[1, n-2]$, and let $X$ be the number of odd values of $i$ for which a type $i$ configuration occurs in the pair $(V,W)$.
Define the random variable $X_i$ 
\[ X_i = \begin{cases}
1 &\text{if V is in}\,\, S_i \,\, \text{and} \,\,  \text{W in} \,\,T_i \\
0 &\text{otherwise}
\end{cases} \]
Then $X = \sum_{i = 1}^{n'} X_{2i-1}$ and  
\[ \Ex(X) = n' P(S_i,T_i). \]

Note that when $\ell$ is in the complement of $o(n)$, we have $\Ex(X) \to \infty$ as $n \to \infty$. (Also,  when $\ell$ is in $\omega(n)$, the expected value $E(x) \to 0$ as $n\to \infty$, hence this proof is not valid when $\ell$ is in this range.)

When $i \neq i'$, $X_iX_{i^\prime} = 1 $ if and only if $V $ is in $S_i \cap S_{i^\prime}$ and $W$ is in $T_i \cap T_{i^\prime}$. Therefore, 

\[ \Ex(X^2) = n' P(S_i,T_i)+ n'(n'-1) P(S_i \cap S_{i^\prime},T_i \cap T_{i^\prime}). \]
We now argue that $\Ex(X^2)/\Ex(X)^2 \to 1$ as $n \to \infty$. 
\begin{align*}
\frac{\Ex(X^2)}{\Ex(X)^2}  &= \frac{n' P(S_i,T_i)+ n'(n'-1) P(S_i \cap S_{i^\prime},T_i \cap T_{i^\prime})}{(n')^2P(S_i,T_i)^2}\\
&= \frac{1}{n^\prime P(S_i,T_i)} + \left( \frac{n^\prime - 1}{n^\prime}\right) \frac{P(S_i \cap S_{i^\prime},T_i \cap T_{i^\prime})}{P(S_i,T_i)^2}.
\end{align*}

When $\ell$ is bounded above by $Mn$, the first term goes to zero as $n \to \infty$. After simplifying a bit, we have, 

\begin{align*}
\frac{P(S_i \cap S_{i^\prime},T_i \cap T_{i^\prime})}{P(S_i,T_i)^2} &= \frac{(\ell -2)(\ell -3)}{\ell^2}  \left(\frac{n-5}{n-3}\right)^{2\ell -6} \left(\frac{n-3}{n-1}\right)^{-2\ell}
\end{align*}

When $\ell = cn$, the product of the later two functions limits to $1$. When can therefore conclude that $E[X^2]/E[X]^2 \to 1$ whenever $\ell$ is (eventually) bounded below by $kn$ and above by $Mn$. Since $E[X] \to \infty$, asymptotically almost surely  $X > 0$, meaning that there is some odd $i$ for which a type $i$  configuration occurs. 
 
\end{proof}

\begin{cor}
Suppose there exits constants $k$ and $M$ so that for large enough $n$, $k\leq \ell/n \leq M$; then a.a.s. $G_{\ell,n}$ is not abelian.
\end{cor}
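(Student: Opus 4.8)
The plan is to deduce this corollary directly from Lemma \ref{lemma:comm_config} together with the necessary condition for commuting from Lemma \ref{lemma:commute}. First I would invoke Lemma \ref{lemma:comm_config}: under the hypothesis that $k \le \ell/n \le M$ for large $n$, asymptotically almost surely there is some $1 \le i \le n-2$ with $v_i = \pm 1$, $v_{i+1} = 0$, $w_i = \pm 1$, and $w_{i+1} = \pm 1$. On this event, compute the relevant $2\times 2$ determinant from Lemma \ref{lemma:commute}:
\[ w_i v_{i+1} - w_{i+1} v_i = (\pm1)\cdot 0 - (\pm 1)\cdot(\pm 1) = \pm 1 \ne 0. \]
Hence the condition $w_i v_{i+1} = w_{i+1} v_i$ fails for this index $i$, so $V$ and $W$ do not commute, and therefore $G_{\ell,n} = \langle V, W\rangle$ is not abelian.

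Since the event from Lemma \ref{lemma:comm_config} occurs asymptotically almost surely, so does the event that $G_{\ell,n}$ is nonabelian, which is the claim. I would also remark that this closes the last remaining range for Part 3 of Theorem \ref{theorem:abelian}: the ranges $\ell \in \omega(\sqrt n) \cap o(n)$ and $\ell \in \omega(n)$ were handled in the preceding subsections, and the functions satisfying $k \le \ell/n \le M$ for large $n$ cover the complement, so together these establish that $G_{\ell,n}$ is a.a.s. nonabelian whenever $\ell \in \omega(\sqrt n)$.

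There is essentially no technical obstacle remaining at this stage: the second-moment computation that constitutes the real work has already been carried out in Lemma \ref{lemma:comm_config}, and the only thing to verify here is the elementary determinant evaluation above. The one point requiring a word of care is simply making sure the hypothesis ``$k \le \ell/n \le M$ for large $n$'' is exactly the hypothesis under which Lemma \ref{lemma:comm_config} was proved, so that its conclusion may be applied verbatim; this is immediate.
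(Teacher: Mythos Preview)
Your proposal is correct and matches the paper's approach: the corollary is stated immediately after Lemma \ref{lemma:comm_config} with no separate proof, since the lemma's proof already observes that a type $i$ configuration forces $v_iw_{i+1}-v_{i+1}w_i\neq 0$ and hence $V,W$ do not commute by Lemma \ref{lemma:commute}. Your explicit determinant check and the remark about closing the final range for Part 3 of Theorem \ref{theorem:abelian} are exactly in the spirit of the paper's treatment.
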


\section{Full step}\label{sec:full}

To analyze whether our group $G_{\ell, n}$ has full step we rely heavily on the results from Section \ref{sec:dist}.

Define two families of indicator random variables $\delta$ and $\gamma$ as follows:
\[ \delta_{v,i} = \begin{cases}
1 & \text{if $v_i = 0$} \\
0 & \text{if $v_i \ne 0$}
\end{cases}
\quad
\delta_{w,i} = \begin{cases}
1 & \text{if $w_i = 0$} \\
0 & \text{if $w_i \ne 0$}
\end{cases}
\quad
\gamma_i = \begin{cases}
1 & \text{if $v_i = w_i = 0$} \\
0 & \text{if $v_i \ne 0$ or $w_i \ne 0$}
\end{cases}
\]

Note that $\gamma_i = \delta_{v,i} \delta_{w,i}$.


\subsection{Part 1 of Theorem \ref{theorem:full_step}: when $\ell \in o(n^2)$}

In this case we show that $G_{\ell,n}$ is a.a.s never full step but we separate the proofs into two subcases.  In Corollary \ref{cor:On} we consider the case when $\ell \in O(n)$ while in Lemma \ref{lem:notOn} we consider the case when $\ell \in \omega(n)\cap o(n^2)$. The following lemma is standard but is the basis for Corollary \ref{cor:On} so we include the proof. 

\begin{lemma}\label{lemma:cee_n_empty}
If $cn$ balls are thrown uniformly and independently into $n$ bins, there is a.a.s. at least one empty bin.
\end{lemma}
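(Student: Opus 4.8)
The plan is to run the second moment method on the number of empty bins. Write $m$ for the number $cn$ of balls (rounded to an integer if one insists on being careful), and for each bin $i\in\{1,\dots,n\}$ let $Y_i$ be the indicator random variable that equals $1$ exactly when bin $i$ is empty, so that $Y=\sum_{i=1}^n Y_i$ counts the empty bins. Since each of the $m$ balls lands in bin $i$ with probability $1/n$, independently, $\Prob(Y_i=1)=(1-1/n)^m$, and by linearity $\Ex(Y)=n(1-1/n)^{cn}$. As $(1-1/n)^{cn}\to e^{-c}$, we get $\Ex(Y)\sim n e^{-c}\to\infty$, so there are many empty bins in expectation; the work is to upgrade this to an a.a.s.\ statement.

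Next I would compute the second moment. For $i\ne j$ the probability that bins $i$ and $j$ are both empty is $(1-2/n)^m$, so
$$\Ex(Y^2)=\sum_i \Prob(Y_i=1)+\sum_{i\ne j}\Prob(Y_i=Y_j=1)=n(1-1/n)^m+n(n-1)(1-2/n)^m,$$
and I would show $\Ex(Y^2)/\Ex(Y)^2\to 1$. The first term contributes $\frac{n(1-1/n)^m}{n^2(1-1/n)^{2m}}=\frac{1}{n(1-1/n)^m}\sim \frac{e^{c}}{n}\to 0$. For the second term the key algebraic identity is $(1-1/n)^2=1-2/n+1/n^2$, which gives
$$\frac{(1-2/n)^m}{(1-1/n)^{2m}}=\left(1-\frac{1}{(n-1)^2}\right)^m,$$
and since $m/(n-1)^2=cn/(n-1)^2\to 0$ this tends to $1$; multiplying by $\frac{n-1}{n}\to 1$ shows the second term tends to $1$. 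Hence $\Var(Y)=\Ex(Y^2)-\Ex(Y)^2=o(\Ex(Y)^2)$.

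Finally, by Chebyshev's inequality, $\Prob(Y=0)\le \Prob(|Y-\Ex(Y)|\ge \Ex(Y))\le \Var(Y)/\Ex(Y)^2\to 0$, so a.a.s.\ $Y\ge 1$, i.e.\ there is at least one empty bin. I expect the only mildly delicate point to be the bookkeeping in the ratio $(1-2/n)^m/(1-1/n)^{2m}$ and the asymptotics of the power $(1-1/(n-1)^2)^{cn}$; everything else is routine. (If one prefers to avoid the second moment entirely, one can instead invoke a Poisson/Chen--Stein approximation for the number of empty bins, but the Chebyshev argument above is the most self-contained.)
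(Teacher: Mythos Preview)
Your argument is correct and is essentially the same as the paper's own proof: both run the second moment method on the number of empty bins, compute $\Ex(Y)=n(1-1/n)^{cn}$ and $\Ex(Y^2)=n(1-1/n)^{cn}+n(n-1)(1-2/n)^{cn}$, show $\Ex(Y^2)/\Ex(Y)^2\to 1$, and conclude $\Prob(Y=0)\to 0$. The only cosmetic difference is that you make the ratio computation $(1-2/n)^{cn}/(1-1/n)^{2cn}\to 1$ more explicit and explicitly invoke Chebyshev.
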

\begin{proof}
Let $X$ be the number of empty bins. Then $X = \sum_i X_i$ where
\[ X_i = \begin{cases}
1 &\text{if bin $i$ is empty} \\
0 &\text{otherwise.}
\end{cases} \]
Then
\begin{align*}
\Ex(X) &= n \Ex(X_i) \\
&= n \Prob(\text{Bin $i$ is empty}) \\
&= n \left( 1 - \frac{1}{n} \right)^{c n}
\end{align*}
and
\begin{align*}
\Ex(X^2) &= \Ex(X) + 2 \sum_{i \ne j} \Ex(X_i X_j) \\
&= n \left( 1 - \frac{1}{n} \right)^{c n} + 2\frac{n(n-1)}{2} \Prob(\text{Bins $i$ and $j$ are both empty}) \\
&= n \left( 1 - \frac{1}{n} \right)^{c n} + n(n-1) \left( 1 - \frac{2}{n} \right)^{c n}.
\end{align*}

Thus $\Ex(X) \to \infty$ and
\begin{align*}
\frac{\Ex(X^2)}{\Ex(X)^2} &= \frac{n \left( 1 - \frac{1}{n} \right)^{c n} + n(n-1) \left( 1 - \frac{2}{n} \right)^{c n}}{n^2 \left( 1 - \frac{1}{n} \right)^{2c n}} \\
&\sim \frac{ \left( 1 - \frac{2}{n} \right)^{cn} }{ \left( 1 - \frac{1}{n} \right)^{2cn} } \\
&\to 1,
\end{align*}
And so $\Prob(X = 0) \to 0$. Thus a.a.s. $X > 0$, and so there is at least one empty bin.
\end{proof}

\begin{cor}\label{cor:On}
If $\ell \in O(n)$, a.a.s. $G_{\ell,n}$ is not full-step.
\end{cor}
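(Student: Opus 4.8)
The plan is to reduce Corollary \ref{cor:On} to Lemma \ref{lemma:not_full} by finding a matching pair of zeros on the superdiagonals of $V$ and $W$. First I would observe that the superdiagonal entries $(v_1,\dots,v_{n-1})$ are obtained by throwing $\ell$ balls (the letters $V_1,\dots,V_\ell$, with signs) into $n-1$ bins, and similarly for $W$; a bin $d$ is empty for $V$ exactly when neither $A_d^{\pm1}$ appears in $V$, which forces $v_d=0$. Since $\ell \in O(n)$, we have $\ell \le cn \le c(n-1) \cdot \frac{n}{n-1}$, so up to adjusting the constant this is precisely the setup of Lemma \ref{lemma:cee_n_empty}: $O(n)$ balls into $n-1$ bins. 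Hence a.a.s. $V$ has some empty bin, i.e. some $d$ with $v_d = 0$, and independently a.a.s. $W$ has some empty bin as well.

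The one subtlety is that Lemma \ref{lemma:not_full} requires a \emph{common} index $d$ with $v_d = w_d = 0$, not merely an empty bin for each of $V$ and $W$ separately. To get this I would strengthen the second-moment computation in Lemma \ref{lemma:cee_n_empty}: instead of counting empty bins of $V$, count indices $d$ for which bin $d$ is empty for \emph{both} $V$ and $W$. Writing $Y = \sum_d Y_d$ with $Y_d$ the indicator that $v_d = w_d = 0$, we have $\Ex(Y_d) = \left(1 - \frac{2}{n-1}\right)^{2\ell}$, so $\Ex(Y) = (n-1)\left(1-\frac{2}{n-1}\right)^{2\ell}$, which tends to $\infty$ when $\ell \in O(n)$ (indeed $(1-2/(n-1))^{2\ell} \sim e^{-4\ell/(n-1)}$ is bounded below by a positive constant). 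The joint event $Y_d Y_{d'} = 1$ has probability $\left(1-\frac{4}{n-1}\right)^{2\ell}$ for $d \ne d'$ (since the two offending pairs of letters must be avoided in both words), so the same computation as in Lemma \ref{lemma:cee_n_empty} gives $\Ex(Y^2)/\Ex(Y)^2 \to 1$, and the second moment method yields $\Prob(Y = 0) \to 0$.

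Thus a.a.s. there is some $1 \le d \le n-1$ with $v_d = w_d = 0$, and then Lemma \ref{lemma:not_full} bounds the step of $G_{\ell,n}$ by $\max\{d-1, n-1-d\} \le n-2 < n-1$, so $G_{\ell,n}$ is a.a.s. not full step. I expect the main obstacle to be purely bookkeeping: making sure the off-by-one discrepancies (bin $1$ and bin $n-1$ receive only one ball per letter rather than two, and the generating set has size $2(n-1)$ rather than $2n$) do not affect the asymptotics — they do not, since all the relevant probabilities differ only by factors of $1 + O(1/n)$, which wash out in the ratio $\Ex(Y^2)/\Ex(Y)^2$. Alternatively, one can cite Lemma \ref{lemma:cee_n_empty} twice together with a short conditioning argument, but carrying out the direct second-moment computation for the joint indicator $Y_d$ is cleaner and self-contained.
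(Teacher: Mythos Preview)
Your proof is correct and follows the same strategy as the paper: find a common index $d$ with $v_d = w_d = 0$ via the second moment method, then invoke Lemma~\ref{lemma:not_full}. The paper's packaging is slicker, though: rather than introducing a new joint indicator $Y_d$ and redoing the second-moment calculation, it simply pools all $2\ell$ letters from $V$ and $W$ together as balls thrown into $n-1$ bins. An empty bin in this combined experiment is precisely an index $d$ with $v_d = w_d = 0$, so Lemma~\ref{lemma:cee_n_empty} applies verbatim with $2\ell$ balls and no new computation is needed. Your version is of course equivalent --- your $Y_d$ is exactly the empty-bin indicator for the pooled experiment.

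Two minor slips, neither of which affects the conclusion. First, the single-bin probability should be $\left(1 - \tfrac{1}{n-1}\right)^{2\ell}$, not $\left(1 - \tfrac{2}{n-1}\right)^{2\ell}$: each letter avoids $\{A_d, A_d^{-1}\}$ with probability $1 - \tfrac{2}{2(n-1)} = 1 - \tfrac{1}{n-1}$. Second, your remark about bins $1$ and $n-1$ receiving ``only one ball per letter rather than two'' is a leftover from the supercommuting analysis in Section~\ref{csqrtn:subsec}; in the present balls-and-bins model each letter contributes exactly one ball, and there is no boundary effect to worry about.
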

\begin{proof}
Let $V = V_1 \dotsm V_\ell$ and $W = W_1 \dotsm W_\ell$. Set up $n-1$ bins and put a ball in bin $i$ whenever some $V_j = A_i^{\pm 1}$ or $W_j = A_i^{\pm 1}$. Note that this process effectively throws in $2\ell$ balls uniformly and independently into the $n-1$ bins. Since $\ell \in O(n)$, there is some $c > 0$ for which $2\ell < c(n-1)$ for large enough $n$, and thus by Lemma \ref{lemma:cee_n_empty} there is an empty bin. This empty bin corresponds to some $i$ for which $v_i = w_i = 0$, and so by Lemma \ref{lemma:not_full} $G_{\ell, n}$ is not full-step.
\end{proof}

\begin{lemma}\label{lem:notOn}
If $\ell \in o(n^2)$ and $\ell \in \omega(n)$, a.a.s.\ $G_{\ell, n}$ is not full-step.
\end{lemma}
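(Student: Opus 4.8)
The plan is to show that a.a.s.\ there exists an index $d$ with $v_d = w_d = 0$; once we have such a matching zero, Lemma \ref{lemma:not_full} bounds the step of $G_{\ell,n}$ by $\max\{d-1,\,n-1-d\}$, which is at most $n-2$ for every $1 \le d \le n-1$, hence strictly less than $n-1$, so $G_{\ell,n}$ is not full-step. Thus the whole lemma reduces to a first-moment/second-moment estimate for the number of matching zeros.

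To carry this out I would work with the random variable $Y = \sum_{i=1}^{n-1}\gamma_i$, where $\gamma_i = \delta_{v,i}\delta_{w,i}$ counts the indices at which both superdiagonal entries vanish. First I would note that the hypothesis $\ell \in \omega(n)$ is exactly what is needed to apply the estimates of Section \ref{sec:dist}, and that these estimates are in fact uniform in the index: the law of $v_k^\ell$ does not depend on $k$, and the law of $(v_{k_1}^\ell, v_{k_2}^\ell)$ does not depend on the choice of distinct pair $k_1 \ne k_2$ (in each case it is a fixed lazy walk on $\Z$ or $\Z^2$). Hence by Corollary \ref{cor:prob_matching}, $\Ex(Y) = (n-1)\,\Prob(v_1 = w_1 = 0) \sim K^2 n^2/\ell$, and since $\ell \in o(n^2)$ this tends to $\infty$.

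Next I would compute the second moment. Writing $\Ex(Y^2) = \Ex(Y) + \sum_{i \ne j}\Prob(v_i = v_j = w_i = w_j = 0)$ and applying Corollary \ref{cor:two_matching}, the double sum is asymptotic to $(n-1)(n-2)\,K^4(n/\ell)^2 \sim K^4 n^4/\ell^2 \sim \Ex(Y)^2$; the remaining $\Ex(Y)$ term is lower order since $\Ex(Y) \to \infty$. Therefore $\Ex(Y^2)/\Ex(Y)^2 \to 1$, and Chebyshev's inequality gives $\Prob(Y = 0) \le \Var(Y)/\Ex(Y)^2 = \Ex(Y^2)/\Ex(Y)^2 - 1 \to 0$, so a.a.s.\ $Y > 0$. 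Combining this with the reduction in the first paragraph completes the proof.

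The only delicate point is the passage from the pointwise asymptotics of Section \ref{sec:dist} to the sums over $i$ and over pairs $i \ne j$, and the need that the error terms there do not accumulate; this is also where both hypotheses are used, $\ell \in \omega(n)$ to have the estimates at all and $\ell \in o(n^2)$ to force $\Ex(Y) \to \infty$ so that the second-moment ratio genuinely tends to $1$. Since the relevant distributions are literally independent of the index, the uniformity is automatic and no additional work beyond Corollaries \ref{cor:prob_matching} and \ref{cor:two_matching} is required.
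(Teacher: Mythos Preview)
Your proposal is correct and follows essentially the same approach as the paper: define the count $Y=\sum_i\gamma_i$ of matching superdiagonal zeros, use Corollaries \ref{cor:prob_matching} and \ref{cor:two_matching} to get $\Ex(Y)\sim K^2 n^2/\ell\to\infty$ and $\Ex(Y^2)/\Ex(Y)^2\to 1$, and then invoke Lemma \ref{lemma:not_full}. Your explicit remark about uniformity in the index (that the laws of $v_k^\ell$ and of $(v_{k_1}^\ell,v_{k_2}^\ell)$ do not depend on the chosen coordinates) is a useful clarification that the paper leaves implicit.
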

\begin{proof}
Let $X$ be the number of positions on the superdiagonal for which $V$ and $W$ both have a $0$. That is
\begin{align*}
X &= \sum_i \gamma_i \\
\Ex(X) &= \sum_i \Ex(\gamma_i) \\
&= n \Prob(v_i = w_i = 0). \\
\intertext{By Corollary \ref{cor:prob_matching},}
&\sim n K^2 \frac{n}{\ell} \\
& \sim K^2 \frac{n^2}{\ell} \\
&\to \infty
\end{align*}
when $\ell \in o(n^2)$. 

Also,
\begin{align*}
\Ex(X^2) &= \Ex\left[ \left( \sum_i \gamma_i \right)^2 \right] \\
&= \sum_i \Ex(\gamma_i) + 2 \sum_{i \ne j} \Ex(\gamma_i \gamma_j) \\
&= \sum_i \Prob(v_i = w_i = 0) + 2 \sum_{i \ne j} \Prob(v_i = v_j = w_i = w_j = 0). \\
\intertext{By Corollaries \ref{cor:prob_matching} and \ref{cor:two_matching},}
&\sim n K^2 \frac{n}{\ell} + n^2 K^4 \frac{n^2}{\ell^2} \\
&= K^2 \frac{n^2}{\ell} + K^4 \frac{n^4}{\ell^2}.
\end{align*}

Then
\begin{align*}
\frac{\Ex(X^2)}{\Ex(X)^2} &\sim \frac{K^2 \frac{n^2}{\ell} + K^4 \frac{n^4}{\ell^2}}{K^4 \frac{n^4}{\ell^2}}
\intertext{and since $\ell \in o(n^2)$ the second term dominates in the numerator to give us}
&\sim \frac{K^4 \frac{n^4}{\ell^2}}{K^4 \frac{n^4}{\ell^2}} \\
&\sim 1.
\end{align*}

Since $\Ex(X) \to \infty$ and $\Ex(X^2)/\Ex(X)^2 \to 1$ then $P(X > 0) \to 1$. So there is at least one $i$ for which $\gamma_i = 1$, that is $v_i = w_i = 0$. Then by Lemma \ref{lemma:not_full} we have that $G_{\ell, n}$ is not full-step.
\end{proof}

\subsection{Part 2 of Theorem \ref{theorem:full_step}: when $\ell \in \omega(n^3)$}

\begin{lemma}
If $\ell \in \omega(n^3)$, a.a.s.\ $G_{\ell, n}$ is full-step.
\end{lemma}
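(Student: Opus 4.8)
The plan is to exhibit one particular $(n-2)$-fold commutator of the generators that is a.a.s.\ nontrivial. Since $G_{\ell,n}\le U_n(\Z)$ automatically has step at most $n-1$, and (as recalled in Section~\ref{sec:prelim}) it has step exactly $n-1$ if and only if some $(n-2)$-fold commutator $[B_1[B_2\cdots[B_{n-2},B_{n-1}]]\cdots]$ with each $B_i\in\{V,W\}$ differs from the identity, it is enough to consider the single $(n-2)$-fold commutator
\[ C^{n-2}:=\underbrace{[W,[W,\dotsc[W}_{n-2},V]]]\cdots] \]
studied in Example~\ref{ex:commutator}. By Lemma~\ref{lemma:nested_comm}, $C^{n-2}$ is zero on its first $n-2$ superdiagonals, so it equals the identity matrix unless its top-right entry $c^{n-2}_{1,n}$ is nonzero; and by Example~\ref{ex:commutator},
\[ c^{n-2}_{1,n}=\sum_{i=1}^{n-1}a_i v_i,\qquad a_i=\pm\binom{n-1}{i}\prod_{j\ne i}w_j, \]
with alternating signs. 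Hence $G_{\ell,n}$ is full step whenever $\sum_i a_i v_i\neq0$, and the whole proof reduces to showing $\Prob\!\left(\sum_i a_i v_i=0\right)\to0$. The essential structural observation is that every $a_i$ depends on $W$ only, hence is independent of $V=(v_1,\dotsc,v_{n-1})$.

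To run that argument I would first control the leading coefficient $a_1=\pm(n-1)\prod_{j=2}^{n-1}w_j$, which is nonzero exactly when none of $w_2,\dotsc,w_{n-1}$ vanishes. The distribution of $w_k$ does not depend on $k$, so Lemma~\ref{lemma:prob_zero} together with a union bound gives
\[ \Prob(a_1=0)\le\sum_{j=2}^{n-1}\Prob(w_j=0)\sim(n-2)K\sqrt{n/\ell}, \]
and the right-hand side tends to $0$ precisely because $\ell\in\omega(n^3)$; this is exactly where, and how tightly, the hypothesis is used. Since $\ell\in\omega(n^3)\subseteq\omega(n)$ and $\Prob(a_1\neq0)\to1$, I would then apply Lemma~\ref{lemma:hyperplane} to the linear form $\sum_i a_i v_i$ — legitimately, since its coefficients $a_i$ are measurable with respect to $W$ and $W$ is independent of $V$ — to obtain $\Prob\!\left(\sum_i a_i v_i=0\right)\to0$. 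Therefore $c^{n-2}_{1,n}\neq0$ a.a.s., so $G_{\ell,n}$ is a.a.s.\ full step.

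If one prefers not to quote Lemma~\ref{lemma:hyperplane} as a black box, the estimate $\Prob\!\left(\sum_i a_i v_i=0\right)\to0$ can be proved directly by conditioning. Fix $W$, and condition further on the number $m$ and the positions of the letters from $\{A_1^{\pm1}\}$ occurring in $V$ and on all the remaining letters of $V$; then every $a_i$ and every one of $v_2,\dotsc,v_{n-1}$ is determined, while $v_1$ is a $\pm1$ simple random walk of length $m$. Hence, whenever $a_1\neq0$, the sum $\sum_i a_i v_i=a_1v_1+(\text{const})$ equals zero with conditional probability at most $\binom{m}{\lfloor m/2\rfloor}2^{-m}=O(m^{-1/2})$; and since $m$ is binomial with mean $\ell/(n-1)\to\infty$, a Chebyshev bound keeps $m$ above $\ell/(2(n-1))$ a.a.s., so this bound is $o(1)$ after averaging over the conditioning. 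I expect the main effort to be bookkeeping rather than a new idea: checking that $(a_i)$ is independent of $V$, and combining the union bound $\Prob(\exists\,j\ge2:\ w_j=0)\to0$ — the only place $\omega(n^3)$ enters, and it enters sharply — with the anticoncentration of $v_1$. All the probabilistic content needed is already in Section~\ref{sec:dist}.
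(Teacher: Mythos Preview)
Your proposal is correct and follows essentially the same approach as the paper: both use the specific $(n-2)$-fold commutator $[W,[W,\dotsc[W,V]]]$ from Example~\ref{ex:commutator}, show via a first-moment/union-bound argument that a.a.s.\ no $w_j$ vanishes (this is where $\ell\in\omega(n^3)$ enters), and then invoke Lemma~\ref{lemma:hyperplane} to conclude $c^{n-2}_{1,n}\ne0$. Your write-up is in fact slightly more careful than the paper's in making explicit the independence of the coefficients $a_i=a_i(W)$ from $V$, which is what licenses the application of Lemma~\ref{lemma:hyperplane}.
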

\begin{proof}
Let $X$ be the number of zeroes on the superdiagonal of $W$. That is
\begin{align*}
X &= \sum_i \delta_{w,i}. \\
\intertext{Then}
\Ex(X) &= \sum_i \Ex(\delta_{w,i}). \\
\intertext{Since the $\delta$ are identically distributed,}
&= n \Prob(w_i = 0). \\
\intertext{By Lemma \ref{lemma:prob_zero},}
&\sim n K \sqrt{\frac{n}{\ell}} \\
&\sim K \sqrt{\frac{n^3}{\ell}} \\
&\to 0
\end{align*}
when $\ell \in \omega(n^3)$. This means that $\Prob(X = 0) \to 1$, and so a.a.s.\ none of the $w_i$ are $0$.

Now, for $G_{\ell, n}$ to be full-step (that is, step $n-1$), the $(n-2)$-commutator subgroup must have a nontrivial element. In particular, consider the commutator
\[ C^{n-2}=\underbrace{[W, [W, \dotsc [W}_\text{$n-2$},V]]]. \]
As we saw in Example \ref{ex:commutator} in Section \ref{sec:prelim}
the upper-right corner entry of $C^{n-2}$ is given by 
\[ c^{n-2}_{n,n} = K_1 v_1 w_2 w_3 \dotsm w_{n-1} + K_2 w_1 v_2 w_3 \dotsm w_{n-1} + \dotsb + K_{n-1} w_1 \dotsm w_{n-2} v_{n-1}\]
where each $K_i = \binom{n-1}{i}$ with alternating signs. Since the $w_i$ and $K_i$ are a.a.s. nonzero and $\ell \in \omega(n)$, Lemma \ref{lemma:hyperplane} says that $\Prob(c^{n-2}_{n,n} = 0) \to 0$ and thus a.a.s. $c^{n-2}_{n,n} \ne 0$, making $C^{n-2}$ nontrivial.
\end{proof}

\bibliographystyle{amsplain}
\bibliography{matrixmodel}

\end{document}